\def\@tipextend{
	\arrowsize=.3pt
	\advance\arrowsize by .5\pgflinewidth
	\pgfarrowsleftextend{-3\arrowsize-.5\pgflinewidth}
	\pgfarrowsrightextend{.5\pgflinewidth}
}
\def\@tipoptions{
	\pgfsetdash{}{0pt}
	\pgfsetroundcap
	\pgfsetroundjoin
}
	\pgfmathsetlength{\pgfutil@tempdima}{\pgflinewidth}
	\pgfmathsetlength{\pgfutil@tempdima}{\pgflinewidth}
	\pgfmathsetlength{\pgfutil@tempdima}{\pgflinewidth}
	\pgfmathsetlength{\pgfutil@tempdima}{\pgflinewidth}
\let\csname pgf@ar@means@|\endcsname=\relax
\let\csname pgf@arrow@code@|\endcsname=\relax
	\pgfmathsetlength{\pgfutil@tempdima}{\pgflinewidth}
\def\rowsep{2.5em}
\def\colsep{2.5em}
\def\diagram{\matrix[diagram]}
\def\arrows{\path[->,font=\scriptsize]}
\newenvironment{tikzcentered}{\center\tikzpicture}{\endtikzpicture\endcenter}
\newenvironment{tikzmath}{\displaymath\tikzpicture}{\endtikzpicture\enddisplaymath}
\newenvironment{tikzequation}{\equation\tikzpicture}{\endtikzpicture\endequation}
\tikzset{
	every picture/.style={
		baseline=(current bounding box.center),% equation numbers are centered.
		>=tip,% set default arrow tip
		line cap=round % set round caps
	},
	diagram/.style={
		inner sep=0,% we don't want any additional margin for diagrams (displaymath/center takes care of that)
		execute at begin cell=\node\bgroup\math\displaystyle,% displaystyle math nodes
		execute at end cell={
			\endmath\egroup;% close node
			\coordinate (-\the\pgfmatrixcurrentrow\the\pgfmatrixcurrentcolumn) at ([yshift=2.6] \the\pgfmatrixcurrentrow\the\pgfmatrixcurrentcolumn.base west);% set uniform left horizontal anchor
			\coordinate (\the\pgfmatrixcurrentrow\the\pgfmatrixcurrentcolumn -) at ([yshift=2.6] \the\pgfmatrixcurrentrow\the\pgfmatrixcurrentcolumn.base east);% set uniform right horizontal anchor
		},
		cells={anchor=base},% aligns text across cells
		nodes={
			name=\the\pgfmatrixcurrentrow\the\pgfmatrixcurrentcolumn,% name (ij)
			inner sep=.3333em % default padding (override the matrix's 'inner sep=0')
		},
		row sep=\rowsep,
		column sep=\colsep
	},
	vshift/.style={
		decoration={normal,raise=#1},
		decorate 
	}
}
\newcommand{\resp}{{\sfcode`\.1000 resp.}}
\newcommand{\ie}{{\sfcode`\.1000 i.e.}}
\newcommand{\cf}{{\sfcode`\.1000 cf.}}
\newcommand{\viz}{{\sfcode`\.1000 viz.}}
\numberwithin{equation}{section}
\theoremstyle{plain}
\newtheorem{theorem}[equation]{Theorem}
\newtheorem{proposition}[equation]{Proposition}
\newtheorem{lemma}[equation]{Lemma}
\newtheorem{corollary}[equation]{Corollary}
\theoremstyle{definition}
\newtheorem{definition}[equation]{Definition}
\newtheorem{remark}[equation]{Remark}
\newtheorem{construction}[equation]{Construction}
\let\scr=\mathcal
\let\bb=\mathbf
\let\phi=\varphi
\def\N{\bb N}
\def\Z{\bb Z}
\def\Q{\bb Q}
\def\A{\bb A}
\def\G{\bb G}
\def\1{\bb 1}
\def\Sch{\scr S\mathrm{ch}}
\def\Set{\scr S\mathrm{et}}
\def\et{{\mathrm{\acute et}}}
\def\h{{\mathrm{h}}}
\def\qfh{{\mathrm{qfh}}}
\def\ft{{\mathrm{ft}}}
\def\Nis{{\mathrm{Nis}}}
\def\ind{\mathrm{ind}}
\def\gr{\mathrm{gp}}
\let\gp=\gr
\def\tr{\mathrm{tr}}
\def\op{\mathrm{op}}
\def\suchthat{\:\vert\:}
\let\into=\hookrightarrow
\let\tens=\otimes
\def\FFree{\mathrm{FFree}{}}
\def\HC{\mathrm{HC}}
\def\Mod{\mathrm{Mod}{}}
\def\Cor{\scr C\mathrm{or}}
\def\TopR{\scr T\mathrm{op}_\infty}
\def\Fin{\scr F\mathrm{in}}
\def\Orb{\scr O}
\def\LPi{\overline\Pi{}}
\def\CMon{\mathrm{CMon}}
\def\HC{\mathrm{HC}}
\DeclareMathOperator{\Hom}{Hom}
\DeclareMathOperator{\Cut}{Cut}
\def\Sym{\mathrm{S}}
\DeclareMathOperator{\Map}{Map}
\DeclareMathOperator{\Fun}{Fun}
\DeclareMathOperator{\Pro}{Pro}
\DeclareMathOperator{\Ind}{Ind}
\DeclareMathOperator{\Shv}{Shv}
\DeclareMathOperator{\PSh}{PSh}
\DeclareMathOperator{\car}{char}
\DeclareMathOperator{\Et}{\acute Et{}}
\DeclareMathOperator{\Etd}{\acute Et_\mathit{l}^\times}
\DeclareMathOperator{\Spec}{Spec}
\DeclareMathOperator{\cosk}{cosk}
\DeclareMathOperator{\fpr}{fpr}
\let\lim=\relax
\DeclareMathOperator*{\lim}{lim}
\DeclareMathOperator*{\colim}{colim}
\DeclareMathOperator*{\hocolim}{hocolim}
\DeclareMathOperator*{\holim}{holim}
\title{The étale symmetric Künneth theorem}
\author{Marc Hoyois}
\address{Fakultät für Mathematik\\
Universität Regensburg\\
93040 Regensburg\\
Germany}
\email{\href{mailto:marc.hoyois@ur.de}{marc.hoyois@ur.de}}
\urladdr{\url{http://www.mathematik.ur.de/hoyois/}}
\date{\today}
\begin{document}

\begin{abstract}
	Let $k$ be an algebraically closed field, $l\neq\car k$ a prime number, and $X$ a quasi-projective scheme over $k$. We show that the étale homotopy type of the $d$th symmetric power of $X$ is $\Z/l$-homologically equivalent to the $d$th strict symmetric power of the étale homotopy type of $X$. We deduce that the $\Z/l$-local étale homotopy type of a motivic Eilenberg--Mac Lane space is an ordinary Eilenberg--Mac Lane space.
\end{abstract}

\setcounter{tocdepth}{1}

\maketitle

\tableofcontents

\section*{Introduction} 
\label{sec:introduction}

In the first part of this paper we show that the étale homotopy type of the $d$th symmetric power of a quasi-projective scheme $X$ over a separably closed field $k$ is $\Z/l$-homologically equivalent to the $d$th symmetric power of the étale homotopy type of $X$, where $l\neq\car k$ is any prime. Symbolically,
\begin{equation*}\tag{\textasteriskcentered}
	\label{eqn:intro}
	L_{\Z/l}\Pi_\infty^\et(\Sym^dX)\simeq L_{\Z/l}\Sym^d\Pi_\infty^\et (X),
\end{equation*}
where $\Pi_\infty^\et$ is the étale homotopy type, $\Sym^d$ is the $d$th symmetric power (more precisely the \emph{strict} symmetric power), and $L_{\Z/l}$ is $\Z/l$-localization à la Bousfield--Kan.
The étale homotopy type $\Pi_\infty^\et X$ of a scheme $X$ is a pro-space originally defined by Artin and Mazur \cite{AM} and later refined by Friedlander \cite{Friedlander:1982}. It is characterized by the property that the (nonabelian) étale cohomology of $X$ with constant coefficients coincides with the cohomology of $\Pi_\infty^\et X$. 

The formula \eqref{eqn:intro} is related to a theorem of Deligne about the étale cohomology of symmetric powers \cite[XVII, Théorème 5.5.21]{SGA4}, but there are three significant differences:
\begin{enumerate}
	\item Deligne's theorem is about cohomology \emph{with proper support}, and so does not say anything about the cohomology of non-proper schemes.
	\item We give an equivalence at the level of homotopy types, whereas Deligne only gives an equivalence at the level of cochains.
	\item Deligne's theorem works over an arbitrary quasi-compact quasi-separated base and with arbitrary Noetherian torsion coefficients; for our result the base must be a separably closed field whose characteristic is prime to the torsion of the coefficients.
\end{enumerate}
 While there may be a relative version of~\eqref{eqn:intro} over a base, the localization away from the residual characteristics cannot be avoided when dealing with non-proper schemes.
 
 In his proof, after reducing to the case where the base is a field $k$, Deligne employs Witt vectors to further reduce to the case where $k$ has characteristic zero (concluding with a transcendental argument). In this step it is crucial that $X$ be proper over $k$. 
 Our arguments are thus necessarily quite different. We use the existence a schematic topology, finer than the étale topology but cohomologically equivalent to it, for which the quotient map $X^d\to \Sym^d X$ is a covering; this is the qfh topology used extensively by Voevodsky in his work on triangulated categories of motives.

Combining~\eqref{eqn:intro} with the motivic Dold--Thom theorem, we show that if $k$ is algebraically closed and $A$ is an abelian group on which the characteristic exponent of $k$ acts invertibly, then the $\Z/l$-local étale homotopy type of a motivic Eilenberg--Mac Lane space $K(A(q),p)$ is the $\Z/l$-localization of an ordinary Eilenberg--Mac Lane space $K(A,p)$.

\subsection*{Conventions}
Throughout this paper, we use the language of $\infty$-categories developed in \cite{HTT} and \cite{HA}.
Although our main results can be stated in more classical language, their proofs use the flexibility of higher category theory in an essential way. We warn the reader that this is the \emph{default} language in this paper, so for example the word ``colimit'' always means ``homotopy colimit'', ``unique'' means ``unique up to a contractible space of choices'', etc. 
We will use the following notation:
\begin{itemize}
	\item $\scr S$ is the $\infty$-category of small $\infty$-groupoids, which we also call \emph{spaces};
	\item $\Set_\Delta$ is the category of simplicial sets;
	\item $\TopR$ is the $\infty$-category of $\infty$-topoi and geometric morphisms \cite[\S6.3]{HTT};
	\item $\scr C_{\leq n}$ is the subcategory of $n$-truncated objects in an $\infty$-category $\scr C$;
	\item $\scr C^\omega$ is the subcategory of compact objects in an $\infty$-category $\scr C$ with filtered colimits;
	\item $\h\scr C$ is the homotopy $1$-category of an $\infty$-category $\scr C$;
	\item $\scr X^\wedge$ is the hypercompletion of an $\infty$-topos $\scr X$ \cite[\S6.5.2]{HTT}.
\end{itemize}

\subsection*{Historical note}
The first draft of this paper was written in 2011 as a step towards the computation of the motivic Steenrod algebra in positive characteristic. Afterwards I realized that the technology of étale homotopy types could be avoided completely using the Bloch–Kato conjecture, which was the approach taken in \cite{HKO}. Since I had no other application in mind I did not attempt to turn this draft into a publishable paper. More recently however, the main result of this paper was used by Zargar in \cite{Zargar} to compute the weight $0$ homotopy groups of the motivic sphere spectrum in positive characteristic. Given this new application, it seemed important that this paper be published after all. I want to thank Chuck Weibel for encouraging me to finally take this paper out of its draft state.

\section{Homotopy types of schemes} 
\label{sec:homotopy_types_of_schemes}

Let $\tau$ be a pretopology on the category of schemes (in the sense of \cite[II, Définition 1.3]{SGA4}). 
 If $X$ is a scheme, the small $\tau$-site of $X$ is the full subcategory of $\Sch_X$ spanned by the members of the $\tau$-coverings of $X$ and equipped with the Grothendieck topology induced by $\tau$ (we assume that this is an essentially small category). We denote by $X_\tau$ the $\infty$-topos of sheaves of spaces on the small $\tau$-site of $X$.
 The assignment $X\mapsto X_\tau$ is functorial: a morphism of schemes $f\colon X\to Y$ induces a geometric morphism of $\infty$-topoi $f_\ast\colon X_\tau\to Y_\tau$ given by $f_*(\scr F)(U)=\scr F(U\times_YX)$. 

Recall that the functor $\scr S\to\TopR$ associating to an $\infty$-groupoid its classifying $\infty$-topos admits a pro-left adjoint $\Pi_\infty\colon\TopR\to \Pro(\scr S)$ associating to any $\infty$-topos its \emph{shape} (see \cite[\S7.1.6]{HTT} or \cite{HoyoisGalois}). The \emph{$\tau$-homotopy type} $\Pi_\infty^\tau X$ of a scheme $X$ is the shape of the $\infty$-topos $X_\tau$:
\[\Pi_\infty^\tau X=\Pi_\infty (X_\tau).\]
This construction defines a functor $\Pi_\infty^\tau\colon\Sch\to\Pro(\scr S)$.

Let $\scr X$ be an $\infty$-topos and let $c\colon\scr S\to\scr X$ be the constant sheaf functor. By definition of the shape, we have $\Map_{\scr X}(*,cK)\simeq \Map_{\Pro(\scr S)}(\Pi_\infty\scr X, K)$ for every $K\in\scr S$. In particular, the cohomology of $\scr X$ with coefficients in an abelian group can be computed as the continuous cohomology of the pro-space $\Pi_\infty\scr X$. If $\scr X$ is locally connected (\ie, if for every $X\in\scr X$ the pro-set $\pi_0\Pi_\infty (\scr X_{/X})$ is constant), we have more generally that the category $\Fun(\Pi_\infty\scr X,\Set)$ of discrete local systems on $\Pi_\infty\scr X$ is equivalent to the category of locally constant sheaves of sets on $\scr X$ \cite[Theorem 3.13]{HoyoisGalois}, and, if $\scr A$ is such a sheaf of abelian groups, then $H^\ast(\scr X,\scr A)$ coincides with the continuous cohomology of the pro-space $\Pi_\infty\scr X$ with coefficients in the corresponding local system \cite[Proposition 2.15]{HoyoisGalois}.

\begin{remark}\label{rmk:site}
	In the definition of $\Pi_\infty^\tau X$, we could have used any $\tau$-site of $X$-schemes containing the small one. For if $X_{\tau}'$ is the resulting $\infty$-topos of sheaves, the canonical geometric morphism $X_\tau\to X_{\tau}'$ is obviously a shape equivalence. It follows that the functor $\Pi_\infty^\tau$ depends only on the Grothendieck topology induced by $\tau$.
\end{remark}

\begin{remark}\label{rmk:relative}
	For schemes over a fixed base scheme $S$, we can define in the same way a relative version of the $\tau$-homotopy type functor taking values in the $\infty$-category $\Pro(S_\tau)$. 
\end{remark}

\begin{remark}\label{rmk:qwe}
	Let $X_\tau^\wedge$ be the hypercompletion of $X_\tau$. By the generalized Verdier hypercovering theorem \cite[Theorem 7.6(b)]{DHI}, $\Pi_\infty (X_\tau^\wedge)$ is corepresented by the simplicially enriched diagram $\Pi_0\colon \HC_\tau(X)\to\Set_\Delta$ where $\HC_\tau(X)$ is the cofiltered simplicial category of $\tau$-hypercovers of $X$ and $\Pi_0(U_\bullet)$ is the simplicial set that has in degree $n$ the colimit of the presheaf $U_n$. See \cite[\S 5]{HoyoisGalois} for details.
\end{remark}

\begin{remark}
	When $\tau=\et$ is the étale pretopology and $X$ is locally Noetherian, $\Pi_\infty (X_\et^\wedge)$ is corepresented by the étale topological type defined by Friedlander in \cite[\S4]{Friedlander:1982}. This follows easily from Remark~\ref{rmk:qwe}.
\end{remark}

\begin{lemma}\label{lem:descent}
	Let $U$ be a diagram in the small $\tau$-site of a scheme $X$ whose colimit in $X_\tau$ is $X$. Then $\Pi_\infty^\tau X$ is the colimit of the diagram of pro-spaces $\Pi_\infty^\tau U$.
\end{lemma}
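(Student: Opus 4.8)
The plan is to reduce the statement to a standard fact about shapes of $\infty$-topoi: the shape functor $\Pi_\infty\colon\TopR\to\Pro(\scr S)$ preserves colimits, since it is a left adjoint (up to the pro-completion formalism). So the essential content is to promote the given hypothesis — a diagram $U$ in the small $\tau$-site whose colimit in $X_\tau$ is contractible (i.e. the terminal object) — into a colimit diagram of $\infty$-topoi, namely an equivalence $\colim_i (X_\tau)_{/U_i} \xrightarrow{\ \sim\ } X_\tau$ in $\TopR$.

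First I would recall that for any $\infty$-topos $\scr X$ and any object $V\in\scr X$, the slice $\scr X_{/V}$ is again an $\infty$-topos, and the canonical morphism $\scr X_{/V}\to\scr X$ (base change along, i.e. composition with, $V\to *$) is a geometric morphism; moreover, when $V$ lies in the small $\tau$-site of $X$, one has a natural identification $(X_\tau)_{/V}\simeq V_\tau$ compatible with the structure maps, so that $\Pi_\infty (X_\tau)_{/U_i} \simeq \Pi_\infty^\tau U_i$. The key point is then the following descent statement for $\infty$-topoi: if $U\colon I\to\scr X$ is a diagram with $\colim_i U_i \simeq *$, then the induced functor $\colim_{i\in I}\scr X_{/U_i}\to\scr X$ is an equivalence of $\infty$-topoi, where the colimit is taken in $\TopR$ (equivalently, the limit over $I^{\op}$ in $\mathrm{Top}^{\mathrm{R}}_\infty$ of the right-adjoint diagram). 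This is precisely the content of \cite[Theorem 6.1.3.9]{HTT} together with the fact that an $\infty$-topos is an $\infty$-topos over itself: the family $\{U_i\to *\}$ being a colimit cover means $*$ is covered by it, and descent in the $\infty$-topos $\scr X$ identifies $\scr X \simeq \lim_{\Delta} \scr X_{/\check{C}(U)_\bullet}$ along the Čech nerve; a cofinality argument then replaces the Čech nerve by the original diagram $I$. Applying $\Pi_\infty$ and using that it preserves colimits gives $\Pi_\infty^\tau X \simeq \colim_i \Pi_\infty^\tau U_i$.

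The main obstacle I expect is bookkeeping rather than conceptual: one must be careful that the colimit $\colim_i U_i\simeq *$ is taken in $X_\tau$ (sheaves on the \emph{small} $\tau$-site), that the objects $U_i$ genuinely represent objects of $X_\tau$ (they do, being in the small site, via the Yoneda/sheafification of representables — here one uses that representables on the small $\tau$-site are already sheaves, or passes to a larger site as in Remark~\ref{rmk:site} without changing $\Pi_\infty^\tau$), and that the diagram $i\mapsto \scr X_{/U_i}$ in $\TopR$ has its colimit computed correctly — colimits in $\TopR$ are subtle, but the relevant one here is exactly the colimit produced by descent, so no independent computation is needed. A secondary subtlety is that $I$ need not be sifted or filtered, so one cannot simply commute $\Pi_\infty$ past the colimit naively at the level of pro-spaces; however, $\Pi_\infty\colon\TopR\to\Pro(\scr S)$ is a genuine left adjoint (to the functor $\scr S\to\TopR$ followed by $\Pro(\scr S)\to\TopR$, or more precisely it arises as such in the pro-completion, see \cite[\S7.1.6]{HTT} and \cite{HoyoisGalois}), hence preserves all small colimits, which is all we need.
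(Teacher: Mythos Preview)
Your approach is essentially identical to the paper's two-line proof: topos descent identifies $X_\tau$ with the colimit in $\TopR$ of the diagram $i\mapsto (U_i)_\tau\simeq (X_\tau)_{/U_i}$, and $\Pi_\infty$ preserves this colimit as a left adjoint. Note that \cite[Theorem 6.1.3.9]{HTT} applies directly to the arbitrary diagram $I$ (it asserts that $V\mapsto\scr X_{/V}$ carries colimits in $\scr X$ to limits of $\infty$-categories, hence to colimits in $\TopR$), so your detour through Čech nerves and a cofinality argument is unnecessary.
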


\begin{proof}
	By \cite[Proposition 6.3.5.14]{HTT}, the $\infty$-topos $X_\tau$ is the colimit in $\TopR$ of the diagram of $\infty$-topoi $U_\tau$. Since $\Pi_\infty\colon\TopR\to\Pro(\scr S)$ is left adjoint, it preserves this colimit.
\end{proof}

\begin{remark}
	Similarly, if $U_\bullet\to X$ is a representable $\tau$-hypercover of $X$, then it is a colimit diagram in $X_\tau^\wedge$, so that $\Pi_\infty (X_\tau^\wedge)$ is the colimit of the simplicial pro-space $\Pi_\infty((U_\bullet)_\tau^\wedge)$. The trivial proof of this fact can be compared with the rather technical proof of~\cite[Theorem 3.4]{Isaksen:2004}, which is the special case $\tau=\et$. This is a good example of the usefulness of the topos-theoretic definition of the shape.
\end{remark}

\section{Strict symmetric powers in \texorpdfstring{$\infty$}{infinity}-categories} 
\label{sec:symmetric_powers}

If $X$ is a CW complex, its $d$th symmetric power $\Sym^dX$ is the set of orbits of the action of the symmetric group $\Sigma_d$ on $X^d$, endowed with the quotient topology. Even though the action of $\Sigma_d$ on $X^d$ is not free, 
it is well known that the homotopy type of $\Sym^d X$ is an invariant of the homotopy type of $X$.
More generally, if $G$ is a group acting on a CW complex $X$, the orbit space $X/G$ can be written as the homotopy colimit
\begin{equation}\label{eqn:X/G}
	X/G\simeq \hocolim_{H\in\Orb(G)^\op}X^H,
\end{equation}
where $\Orb(G)$ is the orbit category of $G$ (whose objects are the subgroups of $G$ and whose morphisms are the $G$-equivariant maps between the corresponding quotients) and $X^H$ is the subspace of $H$-fixed points \cite[Chapter 4, Lemma A.3]{Farjoun:1996}.
In the case of the symmetric group $\Sigma_d$ acting on $X^d$, if $H$ is a subgroup of $\Sigma_d$, then $(X^d)^H\simeq X^{o(H)}$ where $o(H)$ is the set of orbits of the action of $H$ on $\{1,\dotsc,d\}$ and where the factor of $X^{o(H)}$ indexed by an orbit $\{i_1,\dotsc, i_r\}$ is sent diagonally into the corresponding $r$ factors of $X^d$. The formula~\eqref{eqn:X/G} becomes
\begin{equation*}\label{eqn:sympow}
	\Sym^d X\simeq \hocolim_{H\in\Orb(\Sigma_d)^\op}X^{o(H)}.
\end{equation*}
This shows that $\Sym^d$ preserves homotopy equivalences between CW complexes. In particular, it induces a functor $\Sym^d$ from the $\infty$-category of spaces to itself.

This motivates the following definition:

\begin{definition}
	Let $\scr C$ be an $\infty$-category with colimits and finite products and $d\geq 0$ an integer. The $d$th \emph{strict symmetric power} of $X\in\scr C$ is
	\[
	\Sym^d X = \colim_{H\in\Orb(\Sigma_d)^\op}X^{o(H)}.
	\]
\end{definition}

We will relate strict symmetric powers to the notion of strictly commutative monoid in \S\ref{sec:group_completions}.
Note that $\Sym^0X$ is a final object of $\scr C$ and that $\Sym^1 X\simeq X$. 
For example, in an $\infty$-category of sheaves of spaces on a site, $\Sym^d$ is computed by applying $\Sym^d$ objectwise and sheafifying the result, and in a $1$-category it is the usual symmetric power, namely the coequalizer of the action groupoid $\Sigma_d\times X^d\rightrightarrows X^d$. We note that any functor that preserves colimits and finite products commutes with $\Sym^d$.

\begin{remark}
	If the product in $\scr C$ preserves sifted colimits in each variable (for example, if $\scr C$ is cartesian closed or projectively generated), it follows from \cite[Lemma 5.5.8.11]{HTT} that the functor $\Sym^d\colon \scr C\to \scr C$ preserves sifted colimits. In particular, $\Sym^d\colon \scr S\to \scr S$ is the left Kan extension of the functor $\Sym^d\colon \Fin\to \Fin$, where $\Fin$ is the category of finite sets.
\end{remark}

\begin{remark}
	 More generally, one has a strict symmetric power $\Sym^\phi X$ for any group homomorphism $\phi\colon G\to\Sigma_d$:
	 \[
	 \Sym^\phi X = \colim_{H\in\Orb(G)^\op}X^{o(H)}.
	 \]
\end{remark}

\begin{lemma}\label{lem:symmtopos}
	Let $\scr X$ be an $\infty$-topos. Then the inclusion $\scr X_{\leq 0}\into \scr X$ of the subcategory of discrete objects preserves strict symmetric powers.
\end{lemma}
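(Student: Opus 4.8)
The plan is to reduce the statement to the known compatibility of the truncation functor $\tau_{\leq 0}\colon\scr X\to\scr X_{\leq 0}$ with colimits and finite products. Write $\Sym^d X=\colim_{H\in\Orb(\Sigma_d)^{\op}}X^{o(H)}$ by definition. Applying the inclusion $i\colon\scr X_{\leq 0}\into\scr X$ to the strict symmetric power computed in $\scr X_{\leq 0}$, the point is to show
\[
i\bigl(\colim_{H\in\Orb(\Sigma_d)^{\op}}(iX)^{o(H)}\bigr)\simeq \colim_{H\in\Orb(\Sigma_d)^{\op}}X^{o(H)}
\]
for $X\in\scr X_{\leq 0}$, where on the left the product and colimit are formed in $\scr X_{\leq 0}$ and on the right in $\scr X$.

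First I would record the two ingredients. The inclusion $i$ is a right adjoint to $\tau_{\leq 0}$, so a priori it need not preserve colimits; however, in an $\infty$-topos $\scr X_{\leq 0}=\Shv_{\Set}$ is itself a topos (a $1$-topos), and more to the point the left adjoint $\tau_{\leq 0}$ preserves finite products (it is left exact). So for finite products we are fine in the wrong direction, and I instead want to argue that $i$ preserves the relevant finite products: since $\scr X_{\leq 0}$ is closed under finite limits in $\scr X$ (truncated objects are closed under limits), the product $(iX)^{o(H)}$ formed in $\scr X_{\leq 0}$ agrees with the product $X^{o(H)}$ formed in $\scr X$. Thus the diagram $\Orb(\Sigma_d)^{\op}\to\scr X_{\leq 0}$ computing the symmetric power is, after applying $i$, literally the diagram $H\mapsto X^{o(H)}$ in $\scr X$.

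It remains to compare the colimit of this diagram taken in $\scr X_{\leq 0}$ with the one taken in $\scr X$. Here I would invoke that the colimit in $\scr X_{\leq 0}$ is computed as $\tau_{\leq 0}$ of the colimit in $\scr X$ (this is the standard description of colimits in a localization), so what must be shown is that $\colim_{H}X^{o(H)}$ is already $0$-truncated when $X$ is. This is the crux. The orbit category $\Orb(\Sigma_d)$ has a terminal object, namely the trivial subgroup $H=1$ with $(X^d)^{1}=X^d$, but the colimit is over $\Orb(\Sigma_d)^{\op}$, so that becomes an \emph{initial} object and gives no immediate help; what does help is the dual fact that $\Orb(\Sigma_d)^{\op}$ has the full subgroup $H=\Sigma_d$, with $X^{o(\Sigma_d)}=X$, as a \emph{terminal} object—no, that is not terminal either in the orbit category. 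So instead I would argue directly: the colimit defining $\Sym^d$ is, in the case of a $1$-topos of sheaves of sets, the ordinary quotient sheaf $X^d/\Sigma_d$, a $0$-truncated object, and one checks that the $\infty$-categorical colimit $\colim_{H\in\Orb(\Sigma_d)^{\op}}X^{o(H)}$ formed in $\scr X$ has the same $\pi_0$ and vanishing higher homotopy sheaves. Concretely, because each $X^{o(H)}$ is $0$-truncated and the indexing category is an ordinary ($1$-)category, the colimit is computed by a simplicial object whose homotopy sheaves can be analyzed via the spectral sequence of the colimit; the higher homotopy sheaves vanish because the relevant higher $\mathrm{Tor}$'s/derived functors of the colimit over $\Orb(\Sigma_d)^{\op}$ vanish—equivalently, because for any sheaf of sets the formula $X/G\simeq\hocolim_{\Orb(G)^{\op}}X^{H}$ recovers the naive quotient with no higher information, exactly as in the topological statement~\eqref{eqn:X/G}. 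This last vanishing is the main obstacle and the place where one genuinely uses properties of $\Orb(\Sigma_d)$; I would either cite the relevant statement about $\hocolim$ over orbit categories or, more in keeping with the $\infty$-topos spirit, reduce to the universal case $\scr X=\PSh(\Orb(\Sigma_d))$ and check it there by a direct computation, then transport along the colimit-preserving functor $\PSh(\Orb(\Sigma_d))\to\scr X$ classifying $X$ together with its diagonals. Once the colimit in $\scr X$ is known to be $0$-truncated, $\tau_{\leq 0}$ acts as the identity on it, and the two symmetric powers agree, proving that $i$ preserves $\Sym^d$.
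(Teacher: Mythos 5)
Your reductions at the start are correct and agree with the paper: the inclusion $i\colon\scr X_{\leq 0}\into\scr X$ preserves finite products because $\scr X_{\leq 0}$ is closed under limits in $\scr X$, and the colimit in $\scr X_{\leq 0}$ is $\tau_{\leq 0}$ of the colimit in $\scr X$; so everything comes down to showing that $\colim_{H\in\Orb(\Sigma_d)^\op}X^{o(H)}$ is already $0$-truncated when $X$ is. But the proposal never actually closes this last step. The ``spectral sequence of the colimit'' and ``higher $\mathrm{Tor}$'' remarks are gestures toward a computation that is never carried out, and the fallback ``universal case'' transport is flawed as stated: a colimit-preserving (even finite-product-preserving) functor $\PSh(\Orb(\Sigma_d))\to\scr X$ will preserve the colimit defining $\Sym^d$, but it has no reason to preserve $0$-truncated objects, so knowing the answer in $\PSh(\Orb(\Sigma_d))$ tells you nothing about its image in $\scr X$. (Put differently, the colimit of the Yoneda embedding $\Orb(\Sigma_d)^\op\to\PSh(\Orb(\Sigma_d)^\op)$ is the terminal object, and a non–left-exact colimit-preserving functor need not even send it to something truncated.)

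What is missing is the paper's two-stage reduction. First, the statement is true in $\scr S$ by the CW-complex discussion at the start of \S\ref{sec:symmetric_powers}: for a set $X$, the formula $\Sym^dX\simeq\hocolim_{H}X^{o(H)}$ recovers the discrete space $X^d/\Sigma_d$. Since colimits, finite products, and truncations in a presheaf $\infty$-topos are all computed objectwise, the statement holds for any $\PSh(\scr C)$. Second, a general $\infty$-topos $\scr Y$ is an accessible left exact localization $a\colon\PSh(\scr C)\to\scr Y$; such an $a$ preserves colimits, finite products, \emph{and} $0$-truncated objects (this is \cite[Proposition 5.5.6.16]{HTT}, the ingredient your argument lacks), hence preserves $\Sym^d$ and the relevant truncatedness. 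Given $X\in\scr Y_{\leq 0}$, view it as a $0$-truncated presheaf, form $\Sym^dX$ there (where it is $0$-truncated by the first step), and apply $a$. Your instinct to reduce to a ``universal'' presheaf topos is the right instinct; the correct universal source is a presheaf topos presenting $\scr X$, not $\PSh(\Orb(\Sigma_d))$, and the correct transport functor is the left exact localization, not a merely colimit-preserving one.
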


\begin{proof}
	This is true if $\scr X=\scr S$ since symmetric powers preserve discrete CW complexes, hence if $\scr X$ is a presheaf $\infty$-topos. It remains to observe that if $a\colon\scr X\to\scr Y$ is a left exact localization and the result is true in $\scr X$, then it is true in $\scr Y$: this follows from the fact that $a$ preserves $0$-truncated objects \cite[Proposition 5.5.6.16]{HTT}.
\end{proof}

\section{Homological localizations of pro-spaces} 
\label{sec:homological_localizations}

Let $\Pro(\scr S)$ denote the $\infty$-category of pro-spaces. Recall that this is the $\infty$-category freely generated by $\scr S$ under cofiltered limits and that it is equivalent to the full subcategory of $\Fun(\scr S,\scr S)^\op$ spanned by accessible left exact functors \cite[Proposition 3.1.6]{DAG13}. Any such functor is equivalent to $Y\mapsto \colim_{i\in I}\Map(X_i,Y)$ for some small cofiltered diagram $X\colon I\to\scr S$. Moreover, combining~\cite[Proposition 5.3.1.16]{HTT} and the proof of~\cite[Proposition 8.1.6]{SGA4}, we can always find such a corepresentation where $I$ is a cofiltered poset such that, for each $i\in I$, there are only finitely many $j$ with $i\leq j$; such a poset is called \emph{cofinite}.

In~\cite{Isaksen:2004b}, Isaksen constructs a proper model structure on the category $\Pro(\Set_\Delta)$ of pro-simplicial sets, called the \emph{strict model structure}, with the following properties:
\begin{itemize}
	\item a pro-simplicial set $X$ is fibrant if and only if it is isomorphic to a diagram $(X_s)_{s\in I}$ such that $I$ is a cofinite cofiltered poset and
	$X_s\to \lim_{s<t}X_t$
	is a Kan fibration for all $s\in I$;
	\item the inclusion $\Set_\Delta\into\Pro(\Set_\Delta)$ is a left Quillen functor;
	\item it is a simplicial model structure with simplicial mapping sets defined by
	\[\Map_\Delta(X,Y)=\Hom(X\times\Delta^\bullet,Y).\]
\end{itemize}
Denote by $\Pro'(\scr S)$ the $\infty$-category associated to this model category, and by $c\colon\scr S\to\Pro'(\scr S)$ the left derived functor of the inclusion $\Set_\Delta\into\Pro(\Set_\Delta)$. Since $\Pro'(\scr S)$ admits cofiltered limits, there is a unique functor $\phi\colon\Pro(\scr S)\to\Pro'(\scr S)$ that preserves cofiltered limits and such that $\phi\circ j\simeq c$, where $j\colon\scr S\into\Pro(\scr S)$ is the Yoneda embedding.

\begin{lemma}\label{lem:isaksen}
	$\phi\colon\Pro(\scr S)\to\Pro'(\scr S)$ is an equivalence of $\infty$-categories.
\end{lemma}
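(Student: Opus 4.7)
The plan is to exhibit $\phi$ as fully faithful and essentially surjective, exploiting Isaksen's explicit description of fibrant objects together with the simplicial enrichment of his model structure. Put differently, I will check that $\Pro'(\scr S)$ satisfies the universal property of $\Pro(\scr S)$ relative to $\scr S$: namely, that $c\colon\scr S\to\Pro'(\scr S)$ is fully faithful and that every object of $\Pro'(\scr S)$ is a cofiltered limit of objects in the essential image of $c$.

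Fully faithfulness of $c$ is immediate: the constant functor $\Set_\Delta\to\Pro(\Set_\Delta)$ is left Quillen for the strict model structure and a constant pro-simplicial set on a Kan complex is itself fibrant, so $\Map_{\Pro'(\scr S)}(cX,cY)$ is computed by the simplicial enrichment $\Map_\Delta(cX,cY)\simeq\Map_\scr S(X,Y)$. For the main step, I take a fibrant pro-simplicial set in Isaksen's form $(X_s)_{s\in I}$ — with $I$ a cofinite cofiltered poset, each $X_s$ a Kan complex, and each matching map $X_s\to\lim_{s<t}X_t$ a Kan fibration — and claim
\[(X_s)_{s\in I}\simeq\lim_{s\in I}c(X_s)\quad\text{in }\Pro'(\scr S).\]
To verify corepresentability, I take a cofibrant representative $(Y_t)_t$ of an arbitrary $Y\in\Pro'(\scr S)$ and combine the standard hom-set formula in $\Pro(\Set_\Delta)$ with Isaksen's simplicial enrichment: both $\Map_{\Pro'(\scr S)}(Y,(X_s)_s)$ and $\lim_s\Map_{\Pro'(\scr S)}(Y,c(X_s))$ are then computed by the same expression $\lim_s\colim_t\Map(Y_t,X_s)$, with Isaksen's fibrancy condition ensuring that this strict limit of simplicial sets agrees with the $\infty$-categorical homotopy limit in $\scr S$.

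With these two facts, essential surjectivity of $\phi$ is immediate since $\phi$ preserves cofiltered limits by construction and $\phi\circ j\simeq c$. For fully faithfulness, present arbitrary $X,Y\in\Pro(\scr S)$ as $X=\lim_i j(X_i)$ and $Y=\lim_j j(Y_j)$; then both $\Map_{\Pro(\scr S)}(X,Y)$ and $\Map_{\Pro'(\scr S)}(\phi X,\phi Y)$ reduce, by the preceding computations, to the same double limit $\lim_j\colim_i\Map_\scr S(X_i,Y_j)$. The principal obstacle is the comparison in the main step between strict simplicial-set limits and $\infty$-categorical homotopy limits; this is exactly what Isaksen's fibrancy condition on cofinite cofiltered diagrams is engineered to deliver, and the work amounts to checking that this fibrancy propagates through $\Map(Y_t,-)$ and through the filtered colimit over $t$ to yield a fibrant diagram of mapping spaces whose strict limit is the correct homotopy limit.
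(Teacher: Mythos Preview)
Your proposal is correct and follows essentially the same route as the paper's proof: both arguments use Isaksen's description of fibrant objects as cofinite cofiltered diagrams with Kan-fibration matching maps to show that any fibrant $X$ is the cofiltered limit $\lim_s c(X_s)$ in $\Pro'(\scr S)$ (giving essential surjectivity), and then reduce the mapping-space comparison to the formula $\lim_s\colim_t\Map(Y_t,X_s)$ by observing that the fibrancy condition makes the strict simplicial limit a homotopy limit. The paper's write-up is slightly more streamlined---it does not invoke the universal property of $\Pro(\scr S)$ explicitly and does not bother with cofibrant replacement (every object is cofibrant in the strict model structure)---but the mathematical content is identical.
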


\begin{proof}
	Let $X\in\Pro(\Set_\Delta)$ be fibrant. Then $X$ is isomorphic to a diagram $(X_s)$ indexed by a cofinite cofiltered poset and such that $X_s\to\lim_{s<t}X_t$ is a Kan fibration for all $s$, and so, for all $Z\in\Pro(\Set_\Delta)$, $\Map_\Delta(Z,X_s)\to\lim_{s<t}\Map_\Delta(Z,X_t)$ is a Kan fibration. It follows that the limit $\Map_\Delta(Z,X)\simeq\lim_s\Map_\Delta(Z,X_s)$ in $\Set_\Delta$ is in fact a limit in $\scr S$, so that $X\simeq \lim_s c(X_s)$ in $\Pro'(\scr S)$. This shows that $\phi$ is essentially surjective.
	
	Let $X\in\Pro(\scr S)$ and choose a corepresentation $X\colon I\to\scr S$ where $I$ is a cofinite cofiltered poset. Using the model structure on $\Set_\Delta$, $X$ can be strictified to a diagram $X'\colon I\to\Set_\Delta$ such that $X'_s\to\lim_{s<t} X'_t$ is a Kan fibration for all $s\in I$. By the first part of the proof, we then have $X'\simeq \lim_s c(X'_s)$ in $\Pro'(\scr S)$, whence $\phi X\simeq X'$. Given also $Y\in\Pro(\scr S)$, we have
	\[\Map(X',Y')\simeq
\lim_t\Map(X',cY'_t)\simeq\holim_t\Map_\Delta(X',Y'_t)\simeq\lim_t\colim_s\Map(X'_s, Y'_t),\]
	where in the last step we used that filtered colimits of simplicial sets are always colimits in $\scr S$. This shows that $\phi$ is fully faithful.
\end{proof}

Let $\scr S_{<\infty}\subset \scr S$ be the $\infty$-category of truncated spaces. A \emph{pro-truncated space} is a pro-object in $\scr S_{<\infty}$. It is clear that the full embedding $\Pro(\scr S_{<\infty})\into \Pro(\scr S)$ admits a left adjoint
\[\tau_{<\infty}\colon \Pro(\scr S)\to \Pro(\scr S_{<\infty})\]
that preserves cofiltered limits and sends a constant pro-space to its Postnikov tower; it also preserves finite products since truncations do.
The $\tau_{<\infty}$-equivalences in $\Pro(\scr S_*^{\geq 1})$ are  precisely those maps that become $\natural$-isomorphisms in $\Pro(\h\scr S_*^{\geq 1})$ in the sense of Artin and Mazur \cite[Definition 4.2]{AM}.

\begin{remark}\label{rmk:Isaksen}
	The model structure on $\Pro(\Set_\Delta)$ defined in~\cite{Isaksen:2001} is the left Bousfield localization of the strict model structure at the class of $\tau_{<\infty}$-equivalences. It is therefore a model for the $\infty$-category $\Pro(\scr S_{<\infty})$ of pro-truncated spaces.
\end{remark}

Let $R$ be a commutative ring. A morphism $f\colon X\to Y$ in $\Pro(\scr S)$ is called an \emph{$R$-homological equivalence} (\resp{} an \emph{$R$-cohomological equivalence}) if it induces an equivalence of homology pro-groups $H_\ast(X,R)\simeq H_\ast(Y,R)$ (\resp{} an equivalence of cohomology groups $H^*(Y,R)\simeq H^*(X,R)$). By~\cite[Proposition 5.5]{Isaksen:2005}, $f$ is an $R$-homological equivalence if and only if it induces isomorphisms in cohomology with coefficients in arbitrary $R$-modules. A pro-space $X$ is called \emph{$R$-local} if it is local with respect to the class of $R$-homological equivalences, \ie, if for every $R$-homological equivalence $Y\to Z$ the induced map $\Map(Z,X)\to\Map(Y,X)$ is an equivalence in $\scr S$. A pro-space is called \emph{$R$-profinite}\footnote{This terminology is motivated by the case $R=\Z/l$, see Remark~\ref{rmk:l-profinite}.} if it is local with respect to the class of $R$-cohomological equivalences. 
We denote by $\Pro(\scr S)_R$ (\resp{} $\Pro(\scr S)^R$) the $\infty$-category of $R$-local (\resp{} $R$-profinite) pro-spaces. 

The characterization of $R$-homological equivalences in terms of cohomology shows that any $\tau_{<\infty}$-equivalence is an $R$-homological equivalence. We thus have a chain of full embeddings
\[\Pro(\scr S)^R\subset\Pro(\scr S)_R\subset \Pro(\scr S_{<\infty})\subset \Pro(\scr S).\]

\begin{proposition}\label{prop:finiteprod}
	The inclusions $\Pro(\scr S)_R\subset\Pro(\scr S)$ and $\Pro(\scr S)^R\subset\Pro(\scr S)$ admit left adjoints $L_R$ and $L^R$. Moreover, $L_R$ preserves finite products.
\end{proposition}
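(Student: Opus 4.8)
The plan is to produce the left adjoints by a localization/accessibility argument and then to check that $L_R$ preserves finite products by a direct computation on the relevant class of local objects. First I would establish existence of $L_R$ and $L^R$. The classes of $R$-homological and $R$-cohomological equivalences are each generated (as a strongly saturated class, or as a class of equivalences one localizes at) by a set: indeed by Lemma~\ref{lem:isaksen} we may work in Isaksen's model category $\Pro(\Set_\Delta)$, where the $R$-homological (resp.\ $R$-cohomological) model structures are constructed in \cite{Isaksen:2005} as left Bousfield localizations of the strict model structure at a \emph{set} of maps; alternatively one invokes the presentability of $\Pro(\scr S)$ together with the fact that $R$-homology pro-groups commute with cofiltered limits of towers, so that the full subcategory of $R$-local pro-spaces is an accessible localization. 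Either way, \cite[Proposition 5.5.4.15]{HTT} (reflective localization at a strongly saturated class of small generation) gives the reflective inclusions $\Pro(\scr S)_R\subset\Pro(\scr S)$ and $\Pro(\scr S)^R\subset\Pro(\scr S)$ together with their left adjoints $L_R$, $L^R$.

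For the second assertion I would argue that the class $W$ of $R$-homological equivalences is closed under finite products: if $f\colon X\to X'$ and $g\colon Y\to Y'$ are $R$-homological equivalences, then $f\times g\colon X\times Y\to X'\times Y'$ is too. Working with corepresentations by cofinite cofiltered posets and using the characterization of $R$-homological equivalences via cohomology with arbitrary $R$-module coefficients together with the Künneth formula, this reduces to the classical statement that a product of $R$-homology isomorphisms of spaces is an $R$-homology isomorphism; one must be slightly careful to commute the relevant cohomology computations past the cofiltered limits, but since cohomology of a pro-space is a filtered colimit of cohomologies of the indexing spaces, the Künneth short exact sequences pass to the colimit. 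Granting that $W$ is stable under finite products, the standard argument applies: for $X,Y\in\Pro(\scr S)$ the unit maps $X\to L_RX$ and $Y\to L_RY$ lie in $W$, hence so does $X\times Y\to L_RX\times L_RY$; since $L_RX\times L_RY$ is $R$-local (the class of $R$-local objects is closed under products, being characterized by a mapping-space condition), this exhibits $L_RX\times L_RY$ as an $R$-localization of $X\times Y$, i.e.\ $L_R(X\times Y)\simeq L_RX\times L_RY$, and the nullary case $L_R(\ast)\simeq\ast$ is immediate since a point is already $R$-local.

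The main obstacle I anticipate is the bookkeeping needed to make the Künneth argument genuinely work at the \emph{pro}-level rather than levelwise: a product of pro-spaces is computed by a cofiltered diagram indexed by the product poset, and one needs that the $R$-homology pro-groups of $X\times Y$ are computed by the evident tensor/$\operatorname{Tor}$ expression in the pro-homologies of $X$ and $Y$. I would handle this by passing to cohomology with $R$-module coefficients (where the relevant limits become filtered colimits and everything is well behaved), invoking \cite[Proposition 5.5]{Isaksen:2005} to translate back, and reducing to the case where all indexing posets are cofinite so that the corepresenting functors behave well under finite products. Everything else — existence of the adjoints, closure of the local objects under products — is formal.
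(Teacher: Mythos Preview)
Your proposal is correct and follows essentially the same approach as the paper: existence of $L_R$ and $L^R$ via Isaksen's left Bousfield localizations of the strict model structure (invoking Lemma~\ref{lem:isaksen}), and preservation of finite products via the K\"unneth formula. The paper's proof is a terse two sentences and leaves entirely implicit the pro-level bookkeeping you correctly flag as the main obstacle.
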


\begin{proof}
	The existence of the localization functors $L_R$ and $L^R$ follows from Lemma~\ref{lem:isaksen} and the existence of the corresponding left Bousfield localizations of the strict model structure on $\Pro(\Set_\Delta)$ \cite[Theorems 6.3 and 6.7]{Isaksen:2005}. We also give a self-contained proof in Proposition~\ref{prop:localization} below. For the last statement, we must show that the canonical map
	\begin{equation*}\label{eqn:LR-Kunneth}
	L_R(X\times Y) \to L_R(X) \times L_R(Y)
	\end{equation*}
	is an equivalence for all $X,Y\in\Pro(\scr S)$. Since both sides are $R$-local, it suffices to show that this map an $R$-homological equivalence. 
	By definition of $L_R$, the canonical map $C_*(X,R)\to C_*(L_RX,R)$ induces an isomorphism on homology pro-groups. Since $C_*(-,R)\colon \scr S\to \scr D(R)_{\geq 0}$ is a symmetric monoidal functor, we have a natural equivalence
	\[
	C_*(X\times Y,R) \simeq C_*(X,R) \otimes_R C_*(Y,R)
	\]
	in the $\infty$-category $\Pro(\scr D(R)_{\geq 0})$. Since $C_*(X,R)\to C_*(L_R(X),R)$ is a pro-homology isomorphism by definition of $L_R$, it remains to show that the tensor product in $\Pro(\scr D(R)_{\geq 0})$ preserves pro-homology isomorphisms. A morphism in $\Pro(\scr D(R)_{\geq 0})$ is a pro-homology isomorphism if and only if it induces an equivalence on $n$-truncations for all $n$, so the claim follows from the fact that the canonical map $M\otimes_RN \to \tau_{\leq n}M\otimes_R \tau_{\leq n} N$ is a $\tau_{\leq n}$-equivalence for all $M,N\in \scr D(R)_{\geq 0}$.
\end{proof}

The fact that $L_R$ preserves finite products is very useful and we will use it often. It implies in particular that $L_R$ preserves commutative monoids and commutes with the formation of strict symmetric powers. Here is another consequence:

\begin{corollary}\label{cor:distributive}
	Finite products distribute over finite colimits in $\Pro(\scr S)_R$.
\end{corollary}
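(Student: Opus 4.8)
The plan is to reduce to the unlocalized $\infty$-category $\Pro(\scr S)$ and then transfer along the localization $L_R$: first I would check that finite products distribute over finite colimits in $\Pro(\scr S)$, and then I would deduce the same in $\Pro(\scr S)_R$ using Proposition~\ref{prop:finiteprod}.

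For distributivity in $\Pro(\scr S)$ I would use the description recalled above of $\Pro(\scr S)$ as the full subcategory of $\Fun(\scr S,\scr S)^\op$ spanned by accessible left exact functors, under which a pro-space $(X_i)_{i\in I}$ corresponds to $\colim_i\Map(X_i,-)$. That subcategory is closed in $\Fun(\scr S,\scr S)$ under pointwise finite limits and under pointwise filtered colimits (using that in $\scr S$ finite limits commute with finite limits, and that filtered colimits commute with finite limits); hence in $\Pro(\scr S)$ finite colimits are pointwise finite limits of the corepresenting functors, cofiltered limits are pointwise filtered colimits, and therefore cofiltered limits commute with finite colimits in $\Pro(\scr S)$. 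Now write a given $X\in\Pro(\scr S)$ as $\lim_i j(X_i)$ with $I$ cofiltered; since products commute with limits, $X\times(-)\simeq\lim_i\bigl(j(X_i)\times(-)\bigr)$ as endofunctors of $\Pro(\scr S)$. Unwinding the correspondence, $j(X_i)\times(-)$ is the cofiltered-limit-preserving extension of $X_i\times(-)\colon\scr S\to\scr S$, and it acts on accessible left exact functors by $G\mapsto G\circ\Map(X_i,-)$ (via the adjunction $\Map(X_i\times A,B)\simeq\Map(A,\Map(X_i,B))$, together with the fact that $\Map(X_i,-)$ is left exact and accessible), which visibly preserves pointwise finite limits; so $j(X_i)\times(-)$ preserves finite colimits, and then so does the cofiltered limit $X\times(-)$ by the commutation noted above. (Alternatively one can run the classical ``level replacement'' of a finite diagram of pro-spaces and reduce directly to distributivity in the $\infty$-topos $\scr S$.)

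To pass to $\Pro(\scr S)_R$, recall that by Proposition~\ref{prop:finiteprod} the inclusion $\Pro(\scr S)_R\subseteq\Pro(\scr S)$ is reflective with reflector $L_R$, so finite products in $\Pro(\scr S)_R$ agree with those in $\Pro(\scr S)$, while colimits in $\Pro(\scr S)_R$ are obtained by applying $L_R$ to the corresponding colimits in $\Pro(\scr S)$. Given $X\in\Pro(\scr S)_R$ and a finite diagram $(Y_\alpha)$ in $\Pro(\scr S)_R$, writing $\colim$ for colimits in $\Pro(\scr S)_R$ and $\colim'$ for those in $\Pro(\scr S)$, one then computes
\[
	X\times\colim_\alpha Y_\alpha\;\simeq\;L_R\bigl(X\times{\textstyle\colim'_\alpha}Y_\alpha\bigr)\;\simeq\;L_R\bigl({\textstyle\colim'_\alpha}(X\times Y_\alpha)\bigr)\;\simeq\;\colim_\alpha(X\times Y_\alpha),
\]
where the first equivalence uses that $L_R$ preserves finite products and that $L_RX\simeq X$ because $X$ is $R$-local, the second is distributivity in $\Pro(\scr S)$, and the third is the description of colimits in the reflective localization.

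The main obstacle is the first step: establishing cleanly that finite products distribute over finite colimits in $\Pro(\scr S)$, i.e. carrying out the bookkeeping of finite colimits and cofiltered limits there. Once that fact is available — or is cited as a general property of pro-categories of $\infty$-categories with distributive finite products — the passage to $\Pro(\scr S)_R$ is purely formal.
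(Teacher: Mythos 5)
Your proof is correct, and it takes a genuinely different route for the key step. The paper disposes of distributivity in $\Pro(\scr S)$ in one line by asserting that finite colimits there are \emph{universal}, i.e.\ preserved by arbitrary base change, which it justifies by the level-replacement trick: any finite diagram of pro-spaces can be reindexed as a cofiltered diagram of finite diagrams in $\scr S$, so pushouts and pullbacks are computed levelwise, and universality of colimits in the $\infty$-topos $\scr S$ passes through. You instead work with the corepresentation $\Pro(\scr S)\simeq\bigl(\Fun^{\mathrm{lex,acc}}(\scr S,\scr S)\bigr)^\op$, observe that finite colimits and cofiltered limits in $\Pro(\scr S)$ are respectively pointwise finite limits and pointwise filtered colimits of corepresenting functors (hence commute), identify $j(X_i)\times(-)$ as precomposition $G\mapsto G\circ\Map(X_i,-)$, and conclude by passing to the cofiltered limit over $i$. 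This is more of a ``structural'' argument avoiding any explicit level replacement (which you rightly note as the alternative), and it proves only what the corollary needs; the paper's phrasing gives the stronger universality statement. The second step --- transporting distributivity along the product-preserving reflector $L_R$ from Proposition~\ref{prop:finiteprod} --- is exactly what the paper has in mind with ``the result follows using that $L_R$ preserves finite products,'' which you have simply spelled out.
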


\begin{proof}
	Finite colimits are universal in $\Pro(\scr S)$, \ie, are preserved by any base change (since pushouts and pullbacks can be computed levelwise). The result follows using that $L_R$ preserves finite products.
\end{proof}

\begin{remark}\label{rmk:protrunc}
	Let $\scr X$ be an $\infty$-topos and let $\scr X^\wedge$ be its hypercompletion. The geometric morphism $\scr X^\wedge\to\scr X$ induces an equivalence of pro-truncated shapes (since truncated objects are hypercomplete \cite[Lemma 6.5.2.9]{HTT}), and \textit{a fortiori} also of $R$-local and $R$-profinite shapes for any commutative ring $R$.
\end{remark}

\begin{remark}
	Let $l$ be a prime number. The Bockstein long exact sequences show that any $\Z/l$-cohomological equivalence is also a $\Z/l^n$-cohomological equivalence for all $n\geq 1$. In particular, if $\scr X$ is an $\infty$-topos, its $\Z/l$-profinite shape $L^{\Z/l}\Pi_\infty\scr X$ remembers the cohomology of $\scr X$ with $l$-adic coefficients.
\end{remark}

As shown in \cite[Proposition 7.3]{Isaksen:2005}, if $R$ is solid (e.g., $R=\Z/n$ for some integer $n$) and $X\in\scr S$ is connected, then $L_RX$ is the pro-truncation of the Bousfield–Kan $R$-tower of $X$ \cite[I, \S4]{BK}. 
It follows that the limit of the pro-space $L_R X$ is the Bousfield–Kan $R$-completion $R_\infty X$.

The existence of the localization functors $L_R$ and $L^R$ is a special case of a more general result which we now formulate.
If $\scr C$ is any locally small $\infty$-category, $\Pro(\scr C)^\op$ is the full subcategory of $\Fun(\scr C,\scr S)$ spanned by small filtered colimits of corepresentable functors \cite[Remark 5.3.5.9]{HTT}.  If $\scr K$ is any collection of objects of $\scr C$, a morphism $X\to Y$ in $\Pro(\scr C)$ is called a \emph{$\scr K$-equivalence} if it induces equivalences $\Map(Y,K)\simeq \Map(X,K)$ for every $K\in\scr K$, and an object $Z\in \Pro(\scr C)$ is called \emph{$\scr K$-local} if every $\scr K$-equivalence $X\to Y$ induces an equivalence $\Map(Y,Z)\simeq \Map(X,Z)$. Note that $\scr K$-equivalences are preserved by cofiltered limits, since $\scr K\subset\scr C$ and the objects of $\scr C$ are cocompact in $\Pro(\scr C)$. We denote by
\[
\Pro(\scr C)_{\scr K}\subset \Pro(\scr C)
\]
the full subcategory of $\scr K$-local objects.

\begin{proposition}\label{prop:localization}
	Let $\scr C$ be a presentable $\infty$-category and $\scr K$ a collection of objects of $\scr C$.
	Suppose that $\scr K$ is the essential image of an accessible functor.
	Then the inclusion $\Pro(\scr C)_{\scr K}\subset \Pro(\scr C)$ admits a left adjoint. Moreover, $\Pro(\scr C)_{\scr K}=\Pro(\widehat{\scr K})$ where $\widehat{\scr K}\subset\scr C$ is the closure of $\scr K$ under finite limits.
\end{proposition}

\begin{proof}
	With no assumptions on $\scr K$, there is an obvious inclusion $\Pro(\widehat{\scr K})\subset \Pro(\scr S)_{\scr K}$. 
	If $\scr K$ is small, then every functor $\widehat{\scr K}\to\scr S$ is a small colimit of corepresentables, so the inclusion $\Pro(\widehat{\scr K})\subset \Pro(\scr C)$ has a left adjoint $L$ given by restricting a functor $\scr C\to\scr S$ to $\widehat{\scr K}$. If $X\in \Pro(\scr C)_{\scr K}$, then the canonical map $X\to LX$ is a $\scr K$-equivalence between $\scr K$-local objects, hence it is an equivalence. This proves the proposition when $\scr K$ is small.
	
	The proof of the general case follows \cite[Proposition 6.10]{Isaksen:2005}.
	For any $X\in\Pro(\scr C)$, we must construct a $\scr K$-equivalence $X\to Y$ where $Y$ is $\scr K$-local.
	Suppose that $\scr K$ is the essential image of a functor $\phi\colon\scr L\to\scr C$ that preserves $\kappa$-filtered colimits, where $\scr C$ and $\scr L$ are $\kappa$-accessible. For $\lambda\gg\kappa$, let $\scr K^\lambda=\phi(\scr L^\lambda)$ where $\scr L^\lambda\subset\scr L$ is the subcategory of $\lambda$-compact objects.
	 Choose $\lambda_0\gg\kappa$ such that $X$ is a cofiltered limit of $\lambda_0$-compact objects of $\scr C$, and let $X\to Y_0$ be the $\scr K^{\lambda_0}$-localization of $X$. 
	 Inductively, choose $\lambda_n\gg \lambda_{n-1}$ such that $Y_{n-1}$ is a cofiltered limit of $\lambda_{n}$-compact objects, and let $X\to Y_n$ be the $\scr K^{\lambda_n}$-localization of $X$. 
 Finally, let $Y=\lim_n Y_n$. Then $Y$ is $\scr K$-local, and it remains to show that the induced map $X\to Y$ is a $\scr K$-equivalence.
	  
	 Let $K\in\scr K$ be the image of $L\in\scr L$.
	 For any $n\geq 0$, let $\scr L_n$ be the $\lambda_n$-filtered $\infty$-category $(\scr L^{\lambda_n})_{/L}$. Since $\phi$ preserves $\lambda_n$-filtered colimits, $K$ is the colimit of $\phi|\scr L_n$ for any $n$.
	 The conclusion now follows by evaluating the colimit
	 \[
	 \colim_{m,n}\colim_{A\in \scr L_n}\Map(Y_m, \phi(A))
	 \]
	 in two ways. Setting $m=n$, we have
	 \[
	 \colim_n \colim_{A\in \scr L_n}\Map(Y_n, \phi(A)) \simeq \colim_n \colim_{A\in \scr L_n}\Map(X, \phi(A))\simeq \Map(X,K),
	 \]
	 since $X\to Y_n$ is a $\scr K^{\lambda_n}$-equivalence and $X$ is a cofiltered limit of $\lambda_n$-compact objects for any $n$.
	 Setting $m=n-1$, we have
	 \[
	 \colim_n\colim_{A\in \scr L_n}\Map(Y_{n-1},\phi(A)) \simeq \colim_n \Map(Y_{n-1},K) \simeq \Map(Y,K),
	 \]
	 since $Y_{n-1}$ is a cofiltered limit of $\lambda_n$-compact objects.
	 This concludes the proof of the existence of the left adjoint.
	 By construction, $Y$ is in fact $\scr K^\lambda$-local for any large enough $\lambda\gg\kappa$, so we have also proved that
	 \begin{equation*}\label{eqn:lambda}
	 	\Pro(\scr C)_{\scr K}=\bigcup_{\lambda\gg\kappa}\Pro(\scr C)_{\scr K^\lambda}.
	 \end{equation*}
	 This implies that $\Pro(\scr C)_{\scr K}\subset \Pro(\widehat{\scr K})$, since we already know it when $\scr K$ is small.
\end{proof}

Proposition~\ref{prop:localization} applies in particular whenever $\scr K$ is small. For example, if $\scr K$ is the collection of Eilenberg–Mac Lane spaces $K(R,n)$ with $n\geq 0$, then $\Pro(\scr S)_{\scr K}=\Pro(\scr S)^R$. 
Proposition~\ref{prop:localization} also applies with $\scr K$ the collection of Eilenberg–Mac Lane spaces $K(M,n)$ for $M$ any $R$-module and $n\geq 0$, this being the image of a filtered-colimit-preserving functor from a countable disjoint union of copies of the category of $R$-modules. In this case, $\Pro(\scr S)_{\scr K}=\Pro(\scr S)_R$.

\begin{remark}\label{rmk:l-profinite}
	If $l$ is a prime number, the spaces that can be obtained from the Eilenberg–Mac Lane spaces $K(\Z/l,n)$ using finite limits are precisely the truncated spaces with finite $\pi_0$ and whose homotopy groups are finite $l$-groups.
	 Hence, $\Pro(\scr S)^{\Z/l}$ coincides with the $\infty$-category of $l$-profinite spaces studied in \cite[\S3]{DAG13}. 
\end{remark}

\begin{lemma}\label{lem:postnikov}
	Let $\scr C$ be a presentable $\infty$-category, let $(\scr K_\alpha)_\alpha$ be a small filtered diagram of collections of objects of $\scr C$ satisfying the assumption of Proposition~\ref{prop:localization}, and let $\scr K=\bigcup_\alpha\scr K_\alpha$. Then the localization functors induce an equivalence
	\[
	\Pro(\scr C)_{\scr K} \simeq \lim_\alpha \Pro(\scr C)_{\scr K_\alpha}.
	\]
\end{lemma}

\begin{proof}
	Note that $\scr K$ also satisfies the assumption of Proposition~\ref{prop:localization}, so that $\Pro(\scr C)_{\scr K}=\Pro(\widehat{\scr K})$.
	Since $\widehat{\scr K}=\colim_\alpha \widehat{\scr K}{}_\alpha$, we
	 have an equivalence 
	 \[\Fun(\widehat{\scr K},\scr S)\simeq \lim_\alpha \Fun(\widehat{\scr K}{}_\alpha,\scr S),
	 \]
	 which implies that the functor $\Pro(\scr C)_{\scr K} \to \lim_\alpha \Pro(\scr C)_{\scr K_\alpha}$ is fully faithful.
	 It remains to show that a functor $F\colon \widehat{\scr K}\to \scr S$ is a small filtered colimit of corepresentables if each of its restrictions $F|\widehat{\scr K}{}_\alpha$ is. This is true since $F\simeq \colim_\alpha F_\alpha$ where $F_\alpha$ is the left Kan extension of $F|\widehat{\scr K}{}_\alpha$ to $\widehat{\scr K}$.
\end{proof}

For $n\geq 0$, we define $\Pro(\scr S_{\leq n})_R$ to be the subcategory of $\scr K$-local objects in $\Pro(\scr S)$ where $\scr K$ is the collection of Eilenberg–Mac Lane spaces $K(M,i)$ with $M$ an $R$-module and $0\leq i\leq n$. 
Since $\scr K$ consists of $n$-truncated $R$-local objects, we have\footnote{Beware that this inclusion is strict for $n\geq 1$. In particular, $\Pro(\scr S_{\leq n})_R$ is not the subcategory of $n$-truncated objects in $\Pro(\scr S)_R$.}
\[
\Pro(\scr S_{\leq n})_R \subset \Pro(\scr S_{\leq n})\cap \Pro(\scr S)_R.
\]
By Proposition~\ref{prop:localization}, the inclusion $\Pro(\scr S_{\leq n})_R\subset \Pro(\scr S)$ admits a left adjoint $L_{R,\leq n}$. We define $\Pro(\scr S_{\leq n})^R$ and the localization functor $L^R_{\leq n}$ similarly (using only the $R$-module $R$).
By Lemma~\ref{lem:postnikov}, the localization functors $L_{R,\leq n}$ induce an equivalence
\begin{equation}\label{eqn:postnikov}
\Pro(\scr S)_R \simeq \lim_n \Pro(\scr S_{\leq n})_R
\end{equation}
and similarly for $\Pro(\scr S)^R$.

The following proposition shows that the localizations $L_R$ and $L^R$ agree in many cases of interest, partially answering \cite[Question 11.2]{Isaksen:2005}.

\begin{proposition}\label{prop:L_F=L^F}
	Let $F$ be a prime field and let $X$ be a pro-space whose $F$-homology pro-groups are pro-finite-dimensional vector spaces.
	 Then $L_FX$ is $F$-profinite. In other words, the canonical map $L_FX\to L^FX$ is an equivalence.
\end{proposition}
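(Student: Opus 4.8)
The plan is to show that the canonical map $X \to L^F X$ is an $F$-homological equivalence; since $L_F$ is the localization at $F$-homological equivalences and $L^F X$ is already $F$-local (being $F$-profinite, hence in $\Pro(\scr S)_F$), this gives $L_F X \simeq L^F X$, which is $F$-profinite. So everything reduces to upgrading the $F$-\emph{cohomological} equivalence $X \to L^F X$ to an $F$-\emph{homological} equivalence, using the hypothesis that the $F$-homology pro-groups of $X$ are pro-(finite-dimensional). By \cite[Proposition 5.5]{Isaksen:2005}, an $F$-homological equivalence is the same as a map inducing isomorphisms on cohomology with coefficients in all $F$-modules, so I must show: if $H_\ast(X,F)$ is pro-finite-dimensional, then $H^\ast(-,M)$ of the map $X \to L^F X$ is an isomorphism for every $F$-vector space $M$, knowing already that it is for $M = F$.

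First I would reduce to truncated pro-spaces. Since $L^F$ factors through $\Pro(\scr S)^F \subset \Pro(\scr S_{<\infty})$ and, by Lemma~\ref{lem:postnikov}, both $\Pro(\scr S_{<\infty})$ and $\Pro(\scr S)^F$ are limits of their Postnikov towers, it suffices to treat each truncation $\tau_{\leq n}$ separately; moreover cohomology in degree $\leq n$ only sees the $n$-truncation. So I may assume $X$ is a pro-object in $\scr S_{\leq n}$. Next, using the cofinite-poset corepresentation $X \colon I \to \scr S$ (available by the discussion after Lemma~\ref{lem:isaksen}), strictified so that $X_i \to \lim_{i<j} X_j$ is a fibration, I would express $H_\ast(X,F) = \colim_i H_\ast(X_i,F)$ as a filtered colimit of the homology of the levels. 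The hypothesis says this pro-group is pro-(finite-dimensional), i.e. Mittag--Leffler-reachable by finite-dimensional quotients; concretely, after passing to a cofinal subdiagram, each $H_q(X_i,F)$ maps onto a finite-dimensional subgroup that is the image in all further stages. This is precisely the condition under which universal coefficients behaves well in the pro-category: for a finite-dimensional vector space the natural map $H^\ast(X_i,F)\otimes_F M \to H^\ast(X_i,M)$ is an isomorphism, and in the pro-colimit the pro-(finite-dimensionality) lets me commute $\otimes_F M$ past the $\lim$ appearing in continuous cohomology.

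The core of the argument is then a duality/universal-coefficients step. Over a field $F$, continuous cohomology of a pro-space with the pro-finite-dimensionality hypothesis is computed by $H^\ast(X, M) \simeq \Hom_F(H_\ast(X,F), M)$ as pro-$F$-modules — more precisely, $H^q(X,M) = \colim_i \Hom_F(H_q(X_i,F), M)$, and because the pro-system $\{H_q(X_i,F)\}$ is pro-finite-dimensional this colimit of duals-tensored-with-$M$ identifies with $\Hom_F(\lim_i (\text{finite quotients}), M)$, functorially in $M$. Hence for a map $f$ of such pro-spaces, $f$ being an $F$-cohomological equivalence ($M=F$ case) forces the map on pro-finite-dimensional pro-vector-spaces $H_\ast(f,F)$ to be a pro-isomorphism (dualizing back: a map of pro-finite-dimensional pro-vector spaces inducing an iso on $\Hom(-,F)$-pro-groups is a pro-iso), and then $\Hom_F(-,M)$ of it is an iso for every $M$, i.e. $f$ is an $F$-homological equivalence. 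I would apply this to $f\colon X \to L^F X$, after checking that $L^F X$ also has pro-finite-dimensional $F$-homology — which follows because $L^F X \simeq \lim$ of a tower of spaces built from finitely many $K(F,m)$'s under finite limits (the $\scr S_{F\text{-}\mathrm{fc}}$ description of $\Pro(\scr S)^F$), each of which has finite-dimensional $F$-homology in each degree.

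The main obstacle I anticipate is the bookkeeping in the universal-coefficients identification in the pro-category: over a field the space-level universal coefficients theorem is clean, but making the colimit/limit interchange rigorous requires knowing that the pro-system of homology groups is not merely levelwise finite-dimensional but pro-(finite-dimensional), and extracting from that hypothesis a cofinal replacement by an actual tower of finite-dimensional vector spaces (a pro-(finite-dimensional) pro-object is pro-isomorphic to such a tower). Once that reindexing is in hand, both $\colim_i \Hom_F(H_q(X_i,F),-)$ and the cohomology pro-group are represented by the same tower and the functoriality in $M$ is formal. The secondary subtlety is the Postnikov reduction — ensuring that the pro-finite-dimensionality hypothesis passes to each truncation $\tau_{\leq n}X$, which holds because the homology of a truncation is a subquotient-type colimit of the homology of $X$ controlled by the (finitely many, by cofiniteness) Postnikov stages, preserving the pro-finite-dimensionality.
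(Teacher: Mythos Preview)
Your approach is essentially the paper's: show that $X \to L^F X$ is an $F$-homological equivalence by (i) verifying that $L^F X$ also has pro-finite-dimensional $F$-homology and (ii) using the duality between pro-finite-dimensional and ind-finite-dimensional $F$-vector spaces to upgrade the $F$-cohomological equivalence to an $F$-homological one. The paper dispatches (ii) in one line and spends its effort on (i), invoking the Eilenberg--Moore spectral sequence together with the finite-dimensionality of $H^*(K(F,n),F)$ for $n\geq 2$. Your sketch of (i) is where you should tighten things: not every object of $\scr S_{F\textnormal{-}\mathrm{fc}}$ has finite-dimensional $F$-homology in each degree (consider $K(\Q,0)$), so one must first use the hypothesis on $H_0(X,F)$ to see that $L^FX$ has profinite $\pi_0$ and then restrict to such spaces. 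Your Postnikov reduction is unnecessary---the duality argument works directly on pro-homology groups without any truncation.
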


\begin{proof}
	First we claim that any $F$-profinite pro-space with profinite $\pi_0$ satisfies the given condition on $X$.
	Such a pro-space is a cofiltered limit of spaces with finite $\pi_0$ that are obtained from $K(F,n)$'s using finite limits.
	By \cite[Proposition 5.3]{BK}, each connected component of such a space is obtained from the point by a finite sequence of principal fibrations with fibers $K(F,n)$ with $n\geq 1$.
	Using Eilenberg–Moore \cite[Corollary 1.1.10]{DAG13}, it thus suffices to show that $H^m(K(F,n),F)$ is finite-dimensional for every $m\geq 0$ and $n\geq 1$, which is a well-known computation.
	Thus, both $X$ and $L^FX$ have pro-finite-dimensional $F$-homology pro-groups. 
	It follows that the canonical map $X\to L^FX$ induces an isomorphism on $F$-cohomology ind-groups, whence on $F$-homology pro-groups. 
\end{proof}

\begin{remark}\label{rmk:finitehomology}
	It is clear that the class of pro-spaces $X$ satisfying the hypothesis of Proposition~\ref{prop:L_F=L^F} is preserved by $L_F$, retracts, finite products, and finite colimits (it suffices to verify the latter for pushouts).
\end{remark}

\begin{proposition}\label{prop:Kunneth2}
	Let $F$ be a prime field and let $X$ be a pro-space whose $F$-homology pro-groups in degrees $\leq n$ are pro-finite-dimensional vector spaces. Then the canonical map $L_{F,\leq n}X\to L^F_{\leq n}X$ is an equivalence. Furthermore, $L_{F,\leq n}$ preserves finite products of such pro-spaces.
\end{proposition}

\begin{proof}
	As in the proof of Proposition~\ref{prop:L_F=L^F}, $L^F_{\leq n}X$ has pro-finite-dimensional $F$-homology pro-groups, hence the canonical map $X\to L^F_{\leq n}X$ induces an isomorphism on $F$-cohomology ind-groups in degrees $\leq n$. Since these groups are finite-dimensional $F$-vector spaces, this remains true for cohomology with coefficients in any $F$-vector space, which implies the first statement. For the second statement, we must show that the canonical map
	\[
	X\times Y \to L_{\leq n}^FX \times L_{\leq n}^F Y
	\]
	induces an isomorphism on $F$-cohomology in degrees $\leq n$. 
	Since $F$ is a field, we have an isomorphism of graded pro-vector spaces $H_*(X\times Y,F)\simeq H_*(X,F)\otimes_F H_*(Y,F)$, which are pro-finite-dimensional in degrees $\leq n$.
	Dualizing and taking the colimit, we find that
	\[
	H^m(X\times Y, F) \simeq \bigoplus_{r+s=m}H^r(X,F) \otimes H^s(Y,F)
	\]
	for any $m\leq n$. The same formula applies to $L_{\leq n}^FX$ and $L_{\leq n}^FY$. By definition of $L_{\leq n}^F$, we have $H^m(X,F)\simeq H^m(L_{\leq n}^FX,F)$ for all $m\leq n$ and similarly for $Y$, so we are done.
\end{proof}

\section{The h and qfh topologies} 
\label{sec:the_qfh_topology}

Let $X$ be a Noetherian scheme. An \emph{h covering} of $X$ is a finite family $\{U_i\to X\}$ of morphisms of finite type such that the induced morphism $\coprod_i U_i\to X$ is universally submersive (a morphism of schemes $f\colon Y\to X$ is submersive if it is surjective and if the underlying topological space of $X$ has the quotient topology). If in addition each $U_i\to X$ is quasi-finite, it is a \emph{qfh covering}. These notions of coverings define pretopologies on Noetherian schemes which we denote by h and qfh, respectively. The h and qfh topologies are both finer than the fppf topology, and they are \emph{not} subcanonical.

\begin{proposition}
	\label{prop:qfh=et}
	Let $X$ be a Noetherian scheme.
	\begin{enumerate}
		\item The canonical map $\Pi_\infty^\qfh X\to \Pi_\infty^\et X$ induces an isomorphism in cohomology with any local system of abelian coefficients. In particular, for any commutative ring $R$,
		\[L_{R}\Pi_\infty^\qfh X\simeq L_{R}\Pi_\infty^\et X.\]
		\item If $X$ is excellent, the canonical map $\Pi_\infty^\h X\to\Pi_\infty^\qfh X$ induces an isomorphism in cohomology with any local system of \emph{torsion} abelian coefficients. In particular, for any \emph{torsion} commutative ring $R$,
		\[L_{R}\Pi_\infty^\h X\simeq L_{R}\Pi_\infty^\qfh X.\]
	\end{enumerate}
\end{proposition}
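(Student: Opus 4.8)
The plan is to reduce both parts to Voevodsky's cohomological comparison theorems for the qfh and h topologies, and then to feed these into the characterization of homological equivalences recalled in \S\ref{sec:homological_localizations}.

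First I would note that, a sheaf for a finer topology being a sheaf for a coarser one with $\tau$-sheafification as a left exact left adjoint, the refinements $\et\leq\qfh\leq\h$ give geometric morphisms $X_\h\to X_\qfh\to X_\et$, and that the maps in the statement are obtained from these by applying $\Pi_\infty\colon\TopR\to\Pro(\scr S)$. Recall from \S\ref{sec:homotopy_types_of_schemes} that the cohomology of an $\infty$-topos $\scr X$ with constant abelian coefficients agrees with the continuous cohomology of $\Pi_\infty\scr X$, and, when $\scr X$ is locally connected, the same holds for local system coefficients. One checks (using that the members of the small qfh and h sites of a Noetherian scheme are Noetherian, hence have finitely many connected components, and that a connected scheme stays connected in these topologies) that the topoi $X_\et$, $X_\qfh$ and $X_\h$ of a Noetherian scheme are locally connected. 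The first sentences of parts (1) and (2) therefore amount to saying that the morphism of sites induces an isomorphism
\[
H^*(X_\et,\scr A)\ \xrightarrow{\ \sim\ }\ H^*(X_\qfh,\scr A)
\]
for every locally constant sheaf of abelian groups $\scr A$, respectively an isomorphism $H^*(X_\qfh,\scr A)\xrightarrow{\sim}H^*(X_\h,\scr A)$ for every such $\scr A$ which is moreover torsion, $X$ being excellent; this is exactly the content of Voevodsky's comparisons.

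The ``in particular'' clauses are then formal. By the characterization recalled in \S\ref{sec:homological_localizations}, a morphism of pro-spaces is an $R$-homological equivalence as soon as it induces isomorphisms on cohomology with coefficients in every $R$-module, and $L_R$ inverts precisely the $R$-homological equivalences. In part (1), taking $R=\Z$ and applying the displayed isomorphism to arbitrary constant abelian groups shows that $\Pi_\infty^\qfh X\to\Pi_\infty^\et X$ is a $\Z$-homological equivalence; since every $R$-module is an abelian group, a $\Z$-homological equivalence is an $R$-homological equivalence for every commutative ring $R$, whence $L_R\Pi_\infty^\qfh X\simeq L_R\Pi_\infty^\et X$. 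In part (2), if $R$ is torsion then every $R$-module is a torsion abelian group, so the torsion comparison shows $\Pi_\infty^\h X\to\Pi_\infty^\qfh X$ is an $R$-homological equivalence, giving $L_R\Pi_\infty^\h X\simeq L_R\Pi_\infty^\qfh X$.

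The substantive content is thus entirely in the comparison theorems, and that is where I expect the difficulty. For part (1) no torsion hypothesis is needed because qfh coverings are quasi-finite: by Zariski's main theorem they are refined by composites of Nisnevich coverings and finite surjective coverings, so no blow-ups intervene, and the descent statement for finite surjective coverings --- which one reduces to the case of normal schemes --- is the heart of Voevodsky's argument. For part (2) the h topology additionally admits abstract blow-up coverings; the corresponding étale descent with torsion coefficients rests on proper base change, valid for arbitrary torsion sheaves, together with an induction on dimension that requires resolution of singularities --- available over excellent schemes through Hironaka in characteristic zero and de Jong's alterations in general --- which is the one place excellence is used. By contrast, the deductions at the level of pro-homotopy-types and the verification of local connectedness are routine.
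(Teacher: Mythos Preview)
Your approach is essentially the paper's: both reduce the statement to Voevodsky's comparison theorems \cite[Theorems 3.4.4 and 3.4.5]{HS1} (or \cite[\S10]{Suslin:1996}) via the identification of topos cohomology with continuous cohomology of the shape, then read off the $L_R$-statements from the characterization of $R$-homological equivalences. Your explicit treatment of local connectedness and of the ``in particular'' clauses is detail the paper leaves implicit.

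One correction to your closing sketch of Voevodsky's argument for part~(2): the comparison of h and qfh cohomology with torsion coefficients does \emph{not} rely on resolution of singularities or de Jong's alterations. The structural input is rather that every h covering of a Noetherian scheme is refined by a qfh covering followed by a proper birational map (this is where excellence is used, via flattening/platification in the style of Raynaud--Gruson); one then shows that such proper birational modifications induce isomorphisms on torsion qfh cohomology using proper base change and an excision-type argument. No smooth or regular models are required. Alterations do appear later in the paper (Proposition~\ref{prop:dejong}), but for a different purpose. Since you are citing Voevodsky's theorems as black boxes this does not affect the correctness of your proof, but the sketch as written misidentifies the source of the excellence hypothesis.
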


\begin{proof}
Recall that the cohomology of $\Pi_\infty\scr X$ with coefficients in a local system coincides with the cohomology of $\scr X$ with coefficients in the associated locally constant sheaf (see \S\ref{sec:homotopy_types_of_schemes}).
	The first statements are thus translations of \cite[Theorem 3.4.4]{HS1} and \cite[Theorem 3.4.5]{HS1}, respectively (the excellence of $X$ is a standing assumption in \emph{loc.\ cit.}, but it is not used in the proof of (1); see also \cite[\S10]{Suslin:1996} for self-contained proofs). The statements about the $R$-local shapes follow immediately, since $L_R$ inverts morphisms that induce isomorphisms in cohomology with coefficients in any $R$-module (see \S\ref{sec:homological_localizations}).
\end{proof}

\begin{remark}\label{rmk:Jardine}
	Voevodsky's proof also shows that $H^1_\et(X,G)\simeq H^1_\qfh(X,G)$ for any locally constant étale sheaf of groups $G$. It follows from \cite[Proposition 18.4]{Isaksen:2001} (and Remark~\ref{rmk:Isaksen}) that $\Pi_\infty^\qfh X\to \Pi_\infty^\et X$ is in fact a $\tau_{<\infty}$-equivalence.
\end{remark}

For $\scr C$ a small $\infty$-category, we denote by $r\colon \scr C\into \PSh(\scr C)$ the Yoneda embedding, and if $\tau$ is a topology on $\scr C$, we denote by $r_\tau =a_\tau r$ the $\tau$-sheafified Yoneda embedding.

\begin{lemma}\label{lem:compact}
	Let $S$ be a Noetherian scheme and let $\tau\in\{\h,\qfh,\et\}$. Then, for any $n\geq 0$, the image of the Yoneda functor $r_\tau\colon \Sch_S^\ft\to \Shv_\tau(\scr \Sch_S^\ft)_{\leq n}$ consists of compact objects.
\end{lemma}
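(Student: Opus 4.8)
The plan is to reduce the claim to a statement about the small $\tau$-site, where a $\tau$-covering of a Noetherian scheme has only finitely many members (by definition, $\h$ and $\qfh$ coverings are finite families), and then to use the general fact that an object of a sheaf $\infty$-topos is $n$-compact provided that it admits a presentation by finitely many representables at each stage up to level $n+1$. Concretely, first I would recall that for $X$ finite type over $S$ and $\tau\in\{\h,\qfh,\et\}$, the sheaf $r_\tau(X)$ is the $\tau$-sheafification of the representable presheaf $r(X)$, and that $\tau$-sheafification is computed by the usual plus-construction iterated transfinitely. The key input is that, because $S$ is Noetherian and $X$ is of finite type, every $\tau$-covering sieve of $X$ contains a covering sieve generated by a \emph{finite} family $\{U_i\to X\}$ with each $U_i$ again Noetherian and of finite type over $S$; hence the relevant Čech nerves are levelwise finite coproducts of representables of finite-type $S$-schemes.

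The heart of the argument is then the following: in any $\infty$-topos $\scr X$ presented as $\Shv_\tau(\scr C)$ for $\scr C$ small, if an object $F$ can be written as the colimit of a diagram all of whose terms are $n$-truncated and compact (e.g. a finite colimit of objects of the form $r_\tau(c)$ when those are themselves compact), then $F$ is compact in $\scr X_{\leq n}$. I would verify $n$-compactness by induction on $n$. For $n=-1$ the statement is vacuous (the $(-1)$-truncated objects are just the subterminal ones, and $r_\tau(X)\to 1$ being an effective epimorphism when $X\to S$ is surjective is a finiteness-free observation; more to the point, $\Shv_\tau(\Sch_S^\ft)_{\leq -1}$ is a poset and every object is compact there). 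For the inductive step, I would use that $\tau_{\leq n}$ commutes with filtered colimits together with the descent spectral sequence / Postnikov presentation: an $n$-truncated sheaf is built from an $(n-1)$-truncated one by attaching an Eilenberg--Mac Lane sheaf $K(\pi_n, n)$, and mapping out of $r_\tau(X)$ into such an EM sheaf computes $\tau$-cohomology $H^n_\tau(X,-)$, which commutes with filtered colimits of the coefficient group because $X$, being Noetherian of finite type over a Noetherian scheme, is a coherent (quasi-compact quasi-separated) object of the site — so its $\tau$-cohomology is finitary. Chaining these together along the Postnikov tower (finitely many stages, since we are $n$-truncated) gives that $\Map_{\scr X}(r_\tau(X), -)$ commutes with filtered colimits on $\scr X_{\leq n}$.

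The step I expect to be the main obstacle is establishing the finitariness of $\tau$-cohomology for $\tau\in\{\h,\qfh\}$: for the étale topology this is the classical result that cohomology of a qcqs scheme commutes with filtered colimits of abelian sheaves (\cite[VII, 5.7]{SGA4}), but for the $\h$ and $\qfh$ topologies one must either invoke the comparison with étale cohomology from Proposition~\ref{prop:qfh=et} (which handles torsion coefficients, and in fact all coefficients for $\qfh$) or argue directly that the $\h$- and $\qfh$-sites of a Noetherian finite-type $S$-scheme are ``finitary'' sites in the sense that every covering refines to a finite one and every object is quasi-compact. Since Proposition~\ref{prop:qfh=et} already gives an isomorphism $H^\ast_\qfh(X,\scr A)\simeq H^\ast_\et(X,\scr A)$ for arbitrary abelian coefficients and $H^\ast_\h(X,\scr A)\simeq H^\ast_\qfh(X,\scr A)$ for torsion coefficients, and since mapping into $K(\pi_n,n)$ only ever involves a single abelian group of coefficients at a time, I would route the finitariness through that comparison, reducing everything to the well-known étale case; the only care needed is that the comparison is compatible with filtered colimits of coefficients, which is automatic since it is induced by a morphism of sites.
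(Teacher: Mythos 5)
The paper's proof is much shorter and more conceptual: it observes that $\Sch_S^\ft$ has finite limits and the topology $\tau$ is finitary, so $\Shv_\tau(\Sch_S^\ft)$ is a (locally) coherent $\infty$-topos, and then applies the general coherence theorem \cite[Proposition 2.3.9]{DAG13}, which identifies the compact objects of the $n$-truncation with the $n$-truncated coherent objects. Your hands-on Postnikov induction is a reasonable unwinding of what that proposition says, but as written it has two genuine problems.

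First, the base case is wrong as stated. It is not true that ``$\Shv_\tau(\Sch_S^\ft)_{\leq -1}$ is a poset and every object is compact there.'' The $(-1)$-truncated objects form a locale, and compactness of an element of a locale is a nontrivial condition (e.g. in the locale of opens of $\R$, the top element $\R$ is not compact). What is actually true is that $\tau_{\leq -1}r_\tau(X)$ is compact in this locale \emph{because} every $\tau$-cover of $X$ admits a finite refinement; this is the same finitariness input you invoke later, and it must already appear in the base case.

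Second, and more seriously, the route through Proposition~\ref{prop:qfh=et} to obtain finitariness of $\h$- and $\qfh$-cohomology does not work. That proposition compares cohomology with coefficients in \emph{locally constant} (local system) sheaves only, and for $\h$ versus $\qfh$ only with \emph{torsion} coefficients. In the Postnikov induction the coefficients are the homotopy sheaves $\pi_k\scr F_\alpha$, which are arbitrary $\tau$-sheaves of (abelian) groups — filtered colimits of which you must commute past $H^i_\tau(X,-)$ — and these are not locally constant, nor are they torsion in general. So the comparison does not apply. Fortunately, the detour is unnecessary: the $\h$ and $\qfh$ coverings are \emph{finite families by definition}, so the finitariness of $\h$- and $\qfh$-cohomology (indeed of mapping out of $r_\tau(X)$ into filtered colimits of $n$-truncated sheaves) follows directly from the finitariness of the site, exactly as for the étale case on a qcqs scheme. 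This direct argument is what the paper's appeal to coherence packages once and for all. If you want a self-contained proof along your lines, replace the comparison step by the standard argument that on a finitary site with finite limits, sections over a quasi-compact object commute with filtered colimits of $n$-truncated sheaves (which is essentially the content of the cited coherence theorem).
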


\begin{proof}
	The category $\Sch_S^\ft$ has finite limits and the topology $\tau$ is finitary, and so the $\infty$-topos $\Shv_\tau(\Sch_S^\ft)$ is locally coherent and coherent. The result now follows from \cite[Corollary 2.3.10(1)]{DAG13}.
\end{proof}

The advantage of the qfh topology over the étale topology is that it can often cover singular schemes by smooth schemes. Let us make this explicit in the case of quotients of smooth schemes by finite group actions. We first recall the classical existence result for such quotients.

A groupoid scheme $X_\bullet$ is a simplicial scheme such that, for every scheme $Y$, $\Hom(Y,X_\bullet)$ is a groupoid. If $P$ is any property of morphisms of schemes that is stable under base change, we say that $X_\bullet$ has property $P$ if every face map in $X_\bullet$ has property $P$ (of course, it suffices that $d_0\colon X_1\to X_0$ have property $P$).

\begin{lemma}\label{lem:quotient}
	Let $S$ be a scheme and let $X_\bullet$ be a finite and locally free groupoid scheme over $S$. Suppose that for any $x\in X_0$, $d_1(d_0^{-1}(x))$ is contained in an affine open subset of $X_0$ (for example, $X_0$ is quasi-projective over $S$). Then $X_\bullet$ has a colimiting cone $p\colon X_\bullet\to Y$ in the category of $S$-schemes. Moreover,
	\begin{enumerate}
		\item $p$ is integral and surjective, and in particular universally submersive;
		\item the canonical morphism $X_\bullet\to \cosk_0(p)$ is degreewise surjective;
		\item if $S$ is locally Noetherian and $X_0$ is of finite type over $S$, then $Y$ is of finite type over $S$.
	\end{enumerate}
\end{lemma}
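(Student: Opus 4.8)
The plan is to reduce to the well-known existence theorem for quotients of a scheme by a finite group action (or by a finite equivalence relation), and then to verify the asserted properties using standard descent and finiteness arguments. First I would recall that the classical statement—sometimes attributed to Grothendieck, see \cite[V, Théorème 4.1]{SGA3} or \cite[\S III.2]{GIT}—gives, for a finite locally free equivalence relation $R\rightrightarrows X_0$ such that each orbit is contained in an affine open, the existence of a quotient $Y=X_0/R$ with $X_0\to Y$ integral and surjective, $Y$ carrying the quotient topology, and $\scr O_Y=(p_\ast\scr O_{X_0})^R$ computed as an equalizer. The hypothesis that $d_1(d_0^{-1}(x))$ lie in an affine open of $X_0$ is precisely the orbit-affineness condition needed to glue the affine local quotients $\Spec(A^R)$ (for $R$-stable affine opens $\Spec A\subseteq X_0$) into a scheme $Y$. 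A groupoid scheme $X_\bullet$ that is finite and locally free gives such an $R$ by taking $R=X_1$ with the two maps $(d_0,d_1)\colon X_1\to X_0\times_S X_0$; here one must note that the image of $(d_0,d_1)$ need not be a monomorphism, but the coequalizer of $d_0,d_1\colon X_1\rightrightarrows X_0$ in $S$-schemes agrees with the quotient by the equivalence relation generated, and the colimiting cone $p\colon X_\bullet\to Y$ over the whole simplicial diagram is then identified with this coequalizer since a groupoid is a reflexive, symmetric, transitive relation object up to the redundancy of higher simplices.

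The key steps, in order, are: (1) construct $Y$ locally as $\Spec(A^{R})$ on $R$-invariant affine opens and glue, using the orbit-affineness hypothesis to produce enough such opens covering $X_0$; (2) check that $p\colon X_0\to Y$ so constructed is in fact the colimit of $X_\bullet$ in $S$-schemes, which amounts to the universal property $\Hom_S(Y,T)\simeq \mathrm{eq}\big(\Hom_S(X_0,T)\rightrightarrows\Hom_S(X_1,T)\big)$; this is where the groupoid (as opposed to merely pre-equivalence-relation) structure makes the two notions of quotient coincide. For (1) of the enumerated conclusions: integrality and surjectivity of $p$ are local on $Y$ and follow from the ring-theoretic fact that $A$ is integral over $A^R$ when $R$ is finite locally free (each element of $A$ satisfies the monic polynomial $\prod(T-\sigma)$ over its ``orbit''), and surjectivity of $\Spec A\to\Spec A^R$ follows from lying-over; universal submersiveness is then automatic since integral surjective morphisms are universally closed and surjective, hence universally submersive. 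For (2): the map $X_\bullet\to\cosk_0(p)$ being degreewise surjective reduces, by the simplicial identities, to surjectivity of $(d_0,d_1)\colon X_1\to X_0\times_Y X_0$, i.e.\ to the statement that the groupoid is an effective epimorphism onto its ``orbit relation''; this I would deduce from the construction of $Y$, since $X_0\times_Y X_0$ is covered by the $R$-orbits by definition of the quotient topology and the fact that $\scr O_Y$ is the $R$-invariants. For (3): finite type of $Y$ over a locally Noetherian $S$ when $X_0$ is of finite type follows because $A^R\subseteq A$ with $A$ finite over $A^R$ and $A$ a finitely generated $\scr O_S$-algebra forces $A^R$ to be a finitely generated $\scr O_S$-algebra by the Artin--Tate lemma (here Noetherianity of $S$ is used).

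The main obstacle I anticipate is step (2)—correctly identifying the scheme-theoretic colimit of the \emph{simplicial} diagram $X_\bullet$ (as opposed to just the coequalizer of the two face maps $X_1\rightrightarrows X_0$, or the quotient by the equivalence relation \emph{generated} by $(d_0,d_1)$) with the constructed $Y$. The subtlety is that $(d_0,d_1)\colon X_1\to X_0\times_S X_0$ need not be a closed immersion, so $X_1$ is not literally an equivalence relation; one must use that in a \emph{groupoid} the relation is already transitive and symmetric at the level of points (composition and inverse are part of the data), so the equivalence relation it generates on the underlying set of $X_0$ coincides with the image of $(d_0,d_1)$, and moreover the invariant subring $A^{X_1}$ (defined via the two maps $A\rightrightarrows \Gamma(X_1,\scr O)$) agrees with the invariants for the generated relation. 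Handling the non-reduced and non-separated points of $X_0$ carefully here, and checking compatibility of the local gluing data, is the part that requires the most care; everything else is a routine application of descent for finite locally free morphisms and the Artin--Tate lemma.
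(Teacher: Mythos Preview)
Your proposal is correct and follows the same route as the paper, which simply cites the literature: the existence of $p$ with properties (1) and (2) is referred to \cite[III, \S2, 3.2]{Demazure:1970} and \cite[V, Théorème 4.1]{SGA3}, part (3) to \cite[V, Lemme 6.1 (ii)]{SGA3}, and the parenthetical claim about quasi-projectivity together with universal submersiveness of integral surjections to \cite{EGA2}. Your sketch---gluing local invariant rings, integrality via the orbit polynomial, surjectivity of $X_1\to X_0\times_Y X_0$ from the construction, and the Artin--Tate lemma for (3)---is essentially an outline of the content of those references.
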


\begin{proof}
	The claim in parentheses follows from~\cite[Corollaire 4.5.4]{EGA2} and the definition of quasi-projective morphism~\cite[Définition 5.3.1]{EGA2}. An integral and surjective morphism is universally submersive because integral morphisms are closed \cite[Proposition 6.1.10]{EGA2}.
	The existence of $p$ which is integral and surjective and (2) are proved in \cite[III, \S2, 3.2]{Demazure:1970} or \cite[V, Théorème 4.1]{SGA3}. Part (3) is proved in \cite[V, Lemme 6.1(ii)]{SGA3}. 
\end{proof}

\begin{proposition}\label{prop:quotient}
	Let $S$ be a Noetherian scheme and $X_\bullet$ a groupoid scheme of finite type over $S$ as in Lemma~\ref{lem:quotient} with colimit $Y$. Then $r_\qfh X_\bullet\to r_\qfh Y$ is a colimiting cone in $\Shv_\qfh(\Sch_S^\ft)$.
\end{proposition}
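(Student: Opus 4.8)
The plan is to reduce the statement to a sheaf-theoretic fact about the qfh topology, namely that the colimit of the groupoid $r_\qfh X_\bullet$ computed in $\Shv_\qfh(\Sch_S^\ft)$ is already represented by $r_\qfh Y$. Since colimits of groupoid objects in an $\infty$-topos are effective, it suffices to show two things: first, that $r_\qfh Y$ receives a map from $r_\qfh X_0$ which is an effective epimorphism in $\Shv_\qfh(\Sch_S^\ft)$; and second, that the induced map from the colimit $\colim_{[n]\in\Delta^\op} r_\qfh X_n$ to $r_\qfh Y$ is an equivalence, which by effectivity amounts to checking that the canonical map $X_\bullet \to \cosk_0(p\colon X_0\to Y)$ becomes an equivalence after applying $r_\qfh$ and passing to the associated groupoid. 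Concretely one wants $r_\qfh X_1 \simeq r_\qfh X_0 \times_{r_\qfh Y} r_\qfh X_0$ and degreewise surjectivity in higher degrees.

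First I would observe that by Lemma~\ref{lem:quotient}(1), the morphism $p\colon X_0\to Y$ is integral and surjective, hence quasi-finite is not automatic — but integral surjective morphisms are universally submersive, so $\{X_0\to Y\}$ is an h covering; one must check it is in fact a qfh covering. Here finiteness of the groupoid is used: $d_0\colon X_1\to X_0$ is finite and locally free, and this forces $p$ to be finite (not merely integral), since $Y$ is of finite type over the Noetherian base $S$ by Lemma~\ref{lem:quotient}(3), and a finite-type integral morphism is finite. A finite morphism is quasi-finite, so $\{X_0\to Y\}$ is a qfh covering, and therefore $r_\qfh X_0\to r_\qfh Y$ is an effective epimorphism in $\Shv_\qfh(\Sch_S^\ft)$.

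Next I would identify the Čech nerve. Because $p$ is a qfh covering, $r_\qfh Y$ is the colimit of the Čech nerve $\check{C}(p)_\bullet = \cosk_0(p)$ computed in the $\infty$-topos, with $\cosk_0(p)_n = X_0\times_Y\cdots\times_Y X_0$. The content of the proposition is then exactly that the natural map of simplicial schemes $X_\bullet \to \cosk_0(p)$ induces an equivalence of qfh sheaves in each degree, or at least after sheafification of the whole simplicial diagram. In degree $1$ this says $r_\qfh X_1 \simeq r_\qfh(X_0\times_Y X_0)$; I expect this to follow from Lemma~\ref{lem:quotient}(2), which gives that $X_\bullet\to\cosk_0(p)$ is degreewise surjective, combined with the fact that in each degree the map $X_n \to \cosk_0(p)_n$ is a monomorphism of schemes (it is a section-like comparison, coming from the groupoid structure, hence radicial) — a surjective radicial morphism of finite type becomes an isomorphism after qfh sheafification, since the qfh topology is finer than the fppf topology and in particular universal homeomorphisms are qfh-local isomorphisms. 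Alternatively, one treats the map of groupoid objects directly: both $(r_\qfh X_1 \rightrightarrows r_\qfh X_0)$ and its coskeletal comparison present the same quotient, so the induced map on colimits is an equivalence.

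The main obstacle I anticipate is the degree-$1$ comparison $r_\qfh X_1 \to r_\qfh(X_0\times_Y X_0)$: one must know that this map of representable qfh sheaves, which underlying is a morphism of schemes that is surjective (by degreewise surjectivity of $X_\bullet\to\cosk_0(p)$) and a monomorphism (since $X_\bullet$ is a groupoid, so $(d_0,d_1)\colon X_1\to X_0\times X_0$ and hence $X_1\to X_0\times_Y X_0$ is a monomorphism of schemes), becomes an equivalence after qfh sheafification — i.e. that a surjective monomorphism, which is automatically a universal homeomorphism (radicial, surjective, of finite type), is inverted by $a_\qfh$. This is essentially the statement that universal homeomorphisms are qfh-covers with trivial Čech nerve, which in turn rests on the fact (from Voevodsky's theory) that the qfh topology refines the topology generated by universal homeomorphisms; I would cite this or deduce it from the definition of qfh coverings as families inducing universally submersive maps. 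Once that point is granted, the rest is formal manipulation of groupoid objects and their colimits in the $\infty$-topos $\Shv_\qfh(\Sch_S^\ft)$.
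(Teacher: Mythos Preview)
The paper's proof takes a different and much shorter route: it asserts that $X_\bullet \to Y$ is a $1$-coskeletal qfh hypercover (using Lemma~\ref{lem:quotient}) and then cites \cite[Lemma 6.5.3.9]{HTT} to conclude $\scr F(Y)\simeq\lim\scr F(X_\bullet)$ for every qfh sheaf $\scr F$. Your strategy via effectivity of groupoids and comparison with the \v{C}ech nerve $r_\qfh\cosk_0(p)$ is more explicit about where the content lies, but it contains a genuine gap.

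The error is your claim that $(d_0,d_1)\colon X_1\to X_0\times X_0$, and hence $X_1\to X_0\times_Y X_0$, is a monomorphism ``since $X_\bullet$ is a groupoid.'' A groupoid scheme for which $(d_0,d_1)$ is a monomorphism is precisely an equivalence relation, and that is \emph{not} what is assumed here. In the paper's principal example, the action groupoid of $\Sigma_d$ on $X^d$, the map $\Sigma_d\times X^d\to X^d\times_{\Sym^dX}X^d$, $(g,x)\mapsto(x,gx)$, has fiber over $(x,gx)$ a torsor under $\Stab(x)$, which is nontrivial as soon as $x$ has a repeated coordinate. Thus $X_1\to X_0\times_Y X_0$ is finite surjective but not radicial, and $r_\qfh X_1\to r_\qfh(X_0\times_Y X_0)$ is an effective epimorphism that is \emph{not} an equivalence. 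The obstruction is not cosmetic: already for $X_0=\Spec k$ and $X_1$ a nontrivial finite group $G$ one has $Y=\Spec k$, while $\colim_{\Delta^\op} r_\qfh X_\bullet$ is the classifying object $BG$ in $\Shv_\qfh$, whose loop space is $G\neq\ast$. What your argument \emph{can} establish, and what Corollary~\ref{cor:quotient2} actually uses, is the colimit statement in $\Shv_\qfh(\Sch_S^\ft)_{\leq 0}$: there the colimit is the ordinary coequalizer of $r_\qfh X_1\rightrightarrows r_\qfh X_0$, and the surjection $X_1\twoheadrightarrow X_0\times_Y X_0$ from Lemma~\ref{lem:quotient}(2), together with $p$ being a qfh cover, is exactly what is needed to identify that coequalizer with $r_\qfh Y$.
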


\begin{proof}
	By Lemma~\ref{lem:quotient}, the colimiting cone $X_\bullet\to Y$ is a qfh hypercover, and it is $2$-coskeletal since $X_\bullet$ is a groupoid scheme. Hence, if $\scr F$ is a qfh sheaf, we have $\scr F(Y)\simeq \lim\scr F(X_\bullet)$ by \cite[Lemma 6.5.3.9]{HTT}. 
\end{proof}

\begin{corollary}\label{cor:quotient2}
	Let $S$ be a Noetherian scheme and $X$ a quasi-projective $S$-scheme.
	Then the Yoneda functor $r_\qfh\colon\Sch_S^\ft\to\Shv_\qfh(\Sch_S^\ft)$ preserves strict symmetric powers of $X$, \ie, it sends the schematic symmetric power $\Sym^dX$ to the sheafy symmetric power $\Sym^dr_\qfh X$.
\end{corollary}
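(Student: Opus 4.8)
The plan is to identify the schematic symmetric power $\Sym^d X$ with a colimit of a groupoid scheme to which Proposition~\ref{prop:quotient} applies, and then use that $r_\qfh$ preserves this colimit to match it with the sheafy strict symmetric power. Concretely, first I would recall that for $X$ quasi-projective over $S$, the action groupoid $\Sigma_d\times X^d\rightrightarrows X^d$ is a finite (indeed finite étale) locally free groupoid scheme of finite type over $S$, and its coarse quotient in the category of $S$-schemes is precisely the schematic symmetric power $\Sym^d X$. The hypothesis of Lemma~\ref{lem:quotient} is satisfied: $X^d$ is again quasi-projective over $S$ (a product of quasi-projective schemes is quasi-projective), so any finite set of points lies in an affine open. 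Hence by Proposition~\ref{prop:quotient}, the cone $r_\qfh(\Sigma_d\times X^d\rightrightarrows X^d)\to r_\qfh\Sym^d X$ is a colimiting cone in $\Shv_\qfh(\Sch_S^\ft)$, i.e.\ $r_\qfh\Sym^d X$ is the coequalizer of $r_\qfh(\Sigma_d\times X^d)\rightrightarrows r_\qfh X^d$ in the $\infty$-topos.

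Next I would compare this coequalizer with the formula defining $\Sym^d$ in the $\infty$-categorical sense. By the discussion following the definition of strict symmetric power, in an $\infty$-topos of sheaves $\Sym^d$ is computed by applying $\Sym^d$ objectwise and sheafifying; equivalently, since $r_\qfh$ is symmetric monoidal for the Cartesian structure (it preserves finite products because products of representables are represented by products of schemes, which are computed in $\Sch_S^\ft$) and preserves colimits, and since $\Sym^d$ is by definition a colimit indexed by $\Orb(\Sigma_d)^\op$ of finite products, any finite-product-and-colimit-preserving functor commutes with $\Sym^d$. Thus $\Sym^d r_\qfh X \simeq r_\qfh(\Sigma_d)$-twisted colimit which, because $r_\qfh$ preserves products, equals the sheafification of the presheaf-level $\Sym^d$. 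It then remains to observe that the presheaf-level (equivalently $1$-categorical) $\Sym^d$ of the representable $X$ is the coequalizer of $\Sigma_d\times X^d\rightrightarrows X^d$ in presheaves, whose sheafification is the coequalizer in $\Shv_\qfh$ — which we just identified with $r_\qfh\Sym^d X$. Chaining these identifications gives $r_\qfh\Sym^d X\simeq \Sym^d r_\qfh X$.

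The main obstacle I expect is the bookkeeping in the second step: one must be careful that the $1$-categorical symmetric power (coequalizer of the action groupoid) really computes the $\infty$-categorical strict symmetric power after sheafification. The subtlety is that $X^d$ has no fixed points as an $S$-scheme in general, so the motivating formula $\hocolim_{H}X^{o(H)}$ for topological spaces is not directly available; instead one relies on the fact proved earlier — ``any functor that preserves colimits and finite products commutes with $\Sym^d$'' — applied to $r_\qfh$ viewed as $\Sch_S^\ft\to\Shv_\qfh(\Sch_S^\ft)$ followed by, or rather through, the presheaf category. One clean way to organize this: factor $r_\qfh$ as $\Sch_S^\ft\xrightarrow{r}\PSh(\Sch_S^\ft)\xrightarrow{a_\qfh}\Shv_\qfh(\Sch_S^\ft)$. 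The Yoneda embedding $r$ preserves finite products, and sheafification $a_\qfh$ preserves all colimits and finite products, so both commute with $\Sym^d$. Therefore $\Sym^d r_\qfh X\simeq a_\qfh\Sym^d rX\simeq a_\qfh r(\Sym^d X)=r_\qfh\Sym^d X$, where the middle identification uses that in the $1$-category $\PSh(\Sch_S^\ft)$, $\Sym^d$ of a representable $rX$ — the coequalizer of $\Sigma_d\times (rX)^d\rightrightarrows (rX)^d$, i.e.\ of $r(\Sigma_d\times X^d)\rightrightarrows r(X^d)$ — need only be compared to $r(\Sym^d X)$ after $a_\qfh$, and this is exactly the content of Proposition~\ref{prop:quotient} together with the observation that $a_\qfh$ turns the presheaf coequalizer into the sheaf coequalizer. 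This reduces everything to cited results and bypasses any delicate fixed-point analysis.
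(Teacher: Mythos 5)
Your proof follows the same overall strategy as the paper's: use Proposition~\ref{prop:quotient} to identify $r_\qfh\Sym^dX$ with the quotient of the action groupoid in $\Shv_\qfh(\Sch_S^\ft)$, and then match this with the strict symmetric power $\Sym^dr_\qfh X$. But there is a genuine gap in the matching step, which you yourself flag as ``the main obstacle'' and then resolve incorrectly. You need to know that the $\infty$-categorical strict symmetric power of a $0$-truncated object agrees with its $1$-categorical symmetric power (the coequalizer of the action groupoid). You assert that this is ``exactly the content of Proposition~\ref{prop:quotient}'', but that proposition says nothing about it: it is a statement about the Yoneda functor preserving a colimit of schemes, not about the comparison between $\Sym^d$ computed in $\scr X$ versus in $\scr X_{\leq 0}$. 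The missing ingredient is precisely Lemma~\ref{lem:symmtopos}, which the paper invokes, and whose proof goes through the fixed-point formula $X/G\simeq\hocolim_{H\in\Orb(G)^\op}X^H$ from \S\ref{sec:symmetric_powers}. So your closing claim that the argument ``bypasses any delicate fixed-point analysis'' is exactly where the hole sits: without Lemma~\ref{lem:symmtopos} (or a rehash of its proof), the step $a_\qfh\Sym^d rX\simeq a_\qfh r(\Sym^dX)$ is unjustified, because $\Sym^d rX$ in the $\infty$-topos $\PSh(\Sch_S^\ft)$ is a priori the homotopy colimit $\colim_{H\in\Orb(\Sigma_d)^\op}(rX)^{o(H)}$, not the $1$-categorical coequalizer.

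Two secondary slips worth noting. First, you write that ``$r_\qfh$ \dots\ preserves colimits'' and that both $r$ and $a_\qfh$ ``commute with $\Sym^d$''; neither is true, since the (sheafified) Yoneda embedding does not preserve colimits, and you later tacitly retract this by only asserting commutation after sheafification. Second, Proposition~\ref{prop:quotient} gives that $r_\qfh\Sym^dX$ is the colimit of the \emph{full} simplicial object $r_\qfh X_\bullet$ (its geometric realization), not merely a coequalizer of a parallel pair; identifying that realization with the $1$-categorical coequalizer again uses that the target is $0$-truncated. The paper's proof is the clean version of what you are reaching for: compute in $\Shv_\qfh(\scr C)_{\leq 0}$ via Proposition~\ref{prop:quotient}, then lift to $\Shv_\qfh(\scr C)$ via Lemma~\ref{lem:symmtopos}.
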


\begin{proof}
	Since the strict symmetric power is the usual symmetric power in a $1$-category, we have $r_\qfh\Sym^d X\simeq \Sym^dr_\qfh X$ in $\Shv_\qfh(\Sch_S^\ft)_{\leq 0}$ by Proposition~\ref{prop:quotient}, whence in $\Shv_\qfh(\Sch_S^\ft)$ by Lemma~\ref{lem:symmtopos}.
\end{proof}

\begin{proposition}\label{prop:dejong}
	Let $k$ be a perfect field and $i\colon \scr C\into \Sch_k^\ft$ the inclusion of a full subcategory such that every smooth $k$-scheme is Zariski-locally in $\scr C$. Let $i^*\colon\PSh(\Sch_k^\ft)\to\PSh(\scr C)$ be the restriction functor and let $i_!$ be its left adjoint.
	For every $\scr F\in \PSh(\Sch_k^\ft)$, the counit morphism $i_!i^*\scr F \to \scr F$ is an $a_\h^\wedge$-equivalence. 
\end{proposition}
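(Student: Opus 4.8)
The plan is to use de Jong's theory of alterations to build, for any finite-type $k$-scheme $T$, an $h$-hypercover of $T$ by smooth $k$-schemes, and then to check that such a hypercover can be taken (Zariski-locally) in $\scr C$. The point is that the statement ``$i_!i^*\scr F\to\scr F$ is an $a_\h^\wedge$-equivalence'' is a statement about the hypercomplete $h$-topos $\Shv_\h(\Sch_k^\ft)^\wedge$, and a map of presheaves is an $a_\h^\wedge$-equivalence precisely when it induces an equivalence after $h$-sheafification and hypercompletion; since $h$-hypercovers are $\natural$-local-epimorphisms in that hypercomplete topos, it suffices to show that the members of some $h$-hypercover of $\Spec k$, and more generally of every object of $\Sch_k^\ft$, lie in $\scr C$ up to the Zariski topology (which is coarser than $h$). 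Concretely, I would first reduce to showing: for every $T\in\Sch_k^\ft$, the natural map $i_!i^*r_T\to r_T$ (with $r_T$ the representable presheaf) is an $a_\h^\wedge$-equivalence, since every presheaf is a colimit of representables and $i_!i^*$, $a_\h^\wedge$ both commute with colimits.

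The key geometric input is the following: by de Jong's alteration theorem (in the refined form that handles the singular, non-reduced, and group-quotient complications — e.g. \cite{deJong:1996}, \cite{deJong:1997}, or the Gabber refinements), for any reduced finite-type $k$-scheme $T$ there is a proper surjective generically-finite map $T'\to T$ with $T'$ smooth quasi-projective over $k$; iterating on the closed locus over which $T'\to T$ fails to be an isomorphism and using Noetherian induction on $\dim T$, one produces a bounded $h$-hypercover $T_\bullet\to T$ all of whose terms are smooth quasi-projective $k$-schemes. (Nilpotents are harmless since $T_{\mathrm{red}}\to T$ is already an $h$-cover.) By hypothesis every such $T_n$ is Zariski-locally in $\scr C$, so refining the Zariski covers into the hypercover we get an $h$-hypercover of $T$ with all terms in $\scr C$.

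Once that hypercover $T_\bullet\to T$ with $T_n\in\scr C$ is in hand, the formal part runs as follows. In $\Shv_\h(\Sch_k^\ft)^\wedge$ the augmented simplicial object $a_\h^\wedge r_{T_\bullet}\to a_\h^\wedge r_T$ is a colimit diagram (an $h$-hypercover sheafifies to an effective epimorphism of the strongest kind in the hypercomplete topos). On the other hand each $r_{T_n}$ is a colimit of representables $r_U$ with $U\in\scr C$ (its Zariski cover), so $i_!i^*r_{T_n}\to r_{T_n}$ is already an equivalence of presheaves after restricting attention to these covers — more precisely $a_\h^\wedge(i_!i^* r_{T_n})\to a_\h^\wedge r_{T_n}$ is an equivalence because $i_!i^*$ of a presheaf agrees with that presheaf on $\scr C$ and hence has the same $h$-sheafification. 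Applying $a_\h^\wedge$ to the commuting square relating the simplicial resolutions of $i_!i^*r_T$ and $r_T$, and using that $a_\h^\wedge i_!i^*$ also turns $T_\bullet\to T$ into a colimit diagram (colimits are universal and $i_!i^*$ preserves colimits), we conclude by descent that $a_\h^\wedge(i_!i^*r_T)\to a_\h^\wedge r_T$ is an equivalence, i.e. $i_!i^*r_T\to r_T$ is an $a_\h^\wedge$-equivalence.

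The main obstacle is the geometric step: producing the smooth $h$-hypercover functorially enough (or at least compatibly enough under the simplicial face maps, including the degeneracies and the fiber products $T_m\times_T T_n$ that appear) so that the Čech/coskeletal bookkeeping goes through, and handling the subtlety that de Jong alterations are only generically finite, not finite, so the ``hypercover'' has to be assembled by stratifying and gluing rather than taken off the shelf. I would lean on the fact that in the $h$-topology one has enormous freedom — proper surjections, Zariski covers, and nilpotent thickenings are all $h$-covers — so that the standard device of refining a proper-surjective alteration together with an abstract blow-up square into a genuine $h$-hypercover (cf. the construction of $h$-hypercovers by smooth schemes in Voevodsky's work and in \cite{HS1}) applies; the role of $k$ perfect is exactly to guarantee that the alterations can be chosen smooth rather than merely regular.
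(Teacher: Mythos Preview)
Your geometric input is exactly right: de Jong's alterations show that every finite-type $k$-scheme is $\h$-locally smooth, hence $\h$-locally in $\scr C$ after a further Zariski refinement. This is precisely what the paper uses. But the paper's route from there is much shorter, and your hands-on route has both an unnecessary complication and a genuine gap.

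The unnecessary complication is the construction of a full (bounded) $\h$-hypercover by smooth schemes via Noetherian induction. You do not need this. A single $\h$-cover of $T$ by objects of $\scr C$ suffices, and that is immediate: an alteration $T'\to T$ is proper surjective, hence an $\h$-cover, and $T'$ is smooth so Zariski-locally in $\scr C$. Once every object of $\Sch_k^\ft$ admits an $\h$-cover by objects of $\scr C$, the paper simply invokes the comparison lemma \cite[III, Théorème~4.1]{SGA4}: the induced $\h$-topology on $\scr C$ has the property that $i^*$ and $i_*$ restrict to an equivalence between the hypercomplete $\h$-topoi, and the conclusion about the counit follows formally.

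The gap is in your formal descent step. You assert that ``$a_\h^\wedge i_!i^*$ also turns $T_\bullet\to T$ into a colimit diagram'' and justify this by ``colimits are universal and $i_!i^*$ preserves colimits''. Neither clause does the work. That $i_!i^*$ preserves colimits of presheaves tells you $i_!i^*\lvert r_{T_\bullet}\rvert \simeq \lvert i_!i^*r_{T_\bullet}\rvert \simeq \lvert r_{T_\bullet}\rvert$, but it does \emph{not} tell you that $i_!i^*$ carries the $a_\h^\wedge$-equivalence $\lvert r_{T_\bullet}\rvert\to r_T$ to an $a_\h^\wedge$-equivalence. For that you would need to know that $i^*$ sends this particular $a_\h^\wedge$-equivalence to one for the induced topology on $\scr C$ and that $i_!$ sends those back to $a_\h^\wedge$-equivalences in $\Sch_k^\ft$; unwinding this is essentially reproving the comparison lemma. (Universality of colimits is about pullback in a topos and is not relevant here.) So the step you flag as routine is where the actual content lies, and the step you flag as the ``main obstacle'' (building the hypercover with all its coskeletal bookkeeping) is not needed at all.
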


\begin{proof}
	By a well-known theorem of de Jong \cite[Theorem 4.1]{DeJong}, every scheme of finite type over a perfect field $k$ is h-locally smooth, hence is h-locally in $\scr C$. It follows that there is an induced h topology on $\scr C$ whose covering sieves are the restrictions of h-covering sieves in $\Sch_k^\ft$, or equivalently those sieves that generate an h-covering sieve in $\Sch_k^\ft$. By the comparison lemma \cite[III, Théorème 4.1]{SGA4}, the restriction functor $i^*$ and its right adjoint $i_*$ restrict to an equivalence between the subcategories of h sheaves of sets, hence between the $\infty$-categories of hypercomplete h sheaves (since they are the hypercompletions of the associated $1$-localic $\infty$-topoi).
\end{proof}

\section{The étale homotopy type of symmetric powers} 
\label{sec:homotopy_type_of_symmetric_powers}

\begin{proposition}\label{prop:etalekunneth}
	Let $k$ be a separably closed field, $l\neq\car k$ a prime number, and $X$ and $Y$ schemes of finite type over $k$. Let $\tau\in\{\h,\qfh,\et\}$. Then
	\[L_{\Z/l}\Pi_\infty^\tau(X\times_kY)\simeq L_{\Z/l}\Pi_\infty^\tau X\times L_{\Z/l}\Pi_\infty^\tau Y.\]
\end{proposition}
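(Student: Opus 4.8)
The plan is to reduce the Künneth statement for $\tau$-homotopy types to the classical Künneth formula in étale cohomology with $\Z/l$-coefficients, using the localization functor $L_{\Z/l}$ and its compatibility with finite products (Proposition~\ref{prop:finiteprod}). First I would handle the case $\tau = \et$, since the cases $\tau = \qfh$ and $\tau = \h$ then follow immediately: by Proposition~\ref{prop:qfh=et}(1) we have $L_{\Z/l}\Pi_\infty^\qfh Z \simeq L_{\Z/l}\Pi_\infty^\et Z$ for every Noetherian $Z$ (applied to $Z \in \{X, Y, X\times_k Y\}$), and since $\Z/l$ is a torsion ring, Proposition~\ref{prop:qfh=et}(2) does the same for $\h$ in place of $\qfh$ (schemes of finite type over a field are excellent). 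So throughout the main argument I may work with $\Pi_\infty^\et$.

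For the étale case, the natural projections $X \times_k Y \to X$ and $X \times_k Y \to Y$ induce a canonical map
\[
	L_{\Z/l}\Pi_\infty^\et(X\times_kY) \longrightarrow L_{\Z/l}\Pi_\infty^\et X \times L_{\Z/l}\Pi_\infty^\et Y,
\]
where I use that $L_{\Z/l}$ preserves finite products (Proposition~\ref{prop:finiteprod}) to identify the target with $L_{\Z/l}\bigl(\Pi_\infty^\et X \times \Pi_\infty^\et Y\bigr)$. To show this is an equivalence it suffices, by the definition of $\Z/l$-local pro-spaces and the characterization of $\Z/l$-homological equivalences via cohomology with arbitrary $\Z/l$-module coefficients, to check that the underlying map $\Pi_\infty^\et(X\times_kY) \to \Pi_\infty^\et X \times \Pi_\infty^\et Y$ induces an isomorphism on continuous cohomology with $\Z/l$-coefficients (indeed with coefficients in any $\Z/l$-module). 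The cohomology of the product pro-space $\Pi_\infty^\et X \times \Pi_\infty^\et Y$ is computed by the (pro-)Künneth formula as the completed tensor product of the cohomologies of the factors, and the cohomology of $\Pi_\infty^\et(X\times_kY)$ is the étale cohomology $H^*_\et(X\times_kY, \Z/l)$; so the required isomorphism is exactly the Künneth formula in étale cohomology over a separably closed field with finite coefficients prime to the characteristic. This is classical (see \cite[Exp.\ XVII]{SGA4} or \cite[VI, \S8]{Milne}); the hypothesis $l \neq \car k$ enters precisely here, to guarantee the $\Z/l$-cohomology is finite in each degree (using finite type over a separably closed field) and that the Künneth spectral sequence degenerates into the tensor-product formula. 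One must be slightly careful that the finiteness makes the relevant $\mathrm{Tor}$-terms and the completed tensor products behave well at the level of cohomology \emph{ind}-groups (equivalently homology \emph{pro}-groups); since each $H^n_\et(-,\Z/l)$ is finite, there are no lim$^1$ or completion subtleties, and the pro-Künneth map is an isomorphism.

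The main obstacle I anticipate is bookkeeping rather than substance: making precise that the comparison map of pro-spaces is the ``correct'' one (functoriality of $\Pi_\infty^\et$ applied to the two projections, assembled via the universal property of the product) and that evaluating cohomology of a product pro-space really does yield the completed tensor product, so that one may invoke the classical Künneth theorem verbatim. An alternative, perhaps cleaner, route would be to work at the level of $\infty$-topoi: the small étale site of $X\times_k Y$ is related to those of $X$ and $Y$, and one can try to realize $(X\times_k Y)_\et$ in terms of $X_\et$ and $Y_\et$ and then apply $\Pi_\infty$; but I expect the cohomological argument above to be the most direct, since it isolates the single nontrivial input (the étale Künneth formula) and reduces everything else to the formal properties of $L_{\Z/l}$ already established in Section~\ref{sec:homological_localizations} and the comparison results of Section~\ref{sec:the_qfh_topology}.
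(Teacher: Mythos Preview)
Your proposal is correct and follows essentially the same route as the paper: reduce to $\tau=\et$ via Proposition~\ref{prop:qfh=et}, use that $L_{\Z/l}$ preserves finite products, and then verify that the comparison map is a $\Z/l$-homological equivalence by combining the K\"unneth formula for the cohomology of a product of pro-spaces with the étale K\"unneth theorem from \cite[Th.~finitude, Corollaire 1.11]{SGA4.5}. The one place where the paper is more explicit than your sketch is the ``bookkeeping'' step you flag: it passes to the hypercomplete shapes $\Pi_\infty X_\et^\wedge$ (harmless by Remark~\ref{rmk:protrunc}) and uses that these are corepresented by cofiltered diagrams of simplicial sets with finitely many simplices in each degree (via degreewise Noetherian hypercovers), which makes the reduction from an arbitrary $\Z/l$-module $M$ to $M=\Z/l$ and the K\"unneth isomorphism for the product pro-space literally a chain-level statement rather than an appeal to finiteness of the cohomology groups.
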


\begin{proof}
	By Proposition~\ref{prop:qfh=et}, it suffices to prove the lemma for $\tau=\et$. Since $L_{\Z/l}$ preserves finite products and $X_\et$ and $X_\et^\wedge$ have the same $\Z/l$-local shape (see Remark~\ref{rmk:protrunc}), it suffices to show that the canonical map
\begin{equation}\label{eqn:kunneth}
	\Pi_\infty(X\times_kY)_\et^\wedge\to\Pi_\infty X_\et^\wedge\times\Pi_\infty Y_\et^\wedge
\end{equation}
is a $\Z/l$-homological equivalence or, equivalently, that it induces an isomorphism in cohomology with coefficients in any $\Z/l$-module $M$. Both sides of~\eqref{eqn:kunneth} are corepresented by cofiltered diagrams of simplicial sets having finitely many simplices in each degree (by Remark~\ref{rmk:qwe} and the fact that any étale hypercovering of a Noetherian scheme is refined by one that is degreewise Noetherian). If $K$ is any such pro-space, $C_\ast(K,\Z/l)$ is a cofiltered diagram of degreewise finite chain complexes of vector spaces. On the one hand, this implies
	\[C^\ast(K,M)\simeq C^\ast(K,\Z/l)\tens M,\]
	so we may assume that $M=\Z/l$. On the other hand, it implies that the Künneth map
	\[H^\ast(\Pi_\infty X_\et^\wedge,\Z/l)\tens H^\ast(\Pi_\infty Y_\et^\wedge,\Z/l)\to H^\ast(\Pi_\infty X_\et^\wedge\times\Pi_\infty Y_\et^\wedge,\Z/l)\]
	is an isomorphism. The composition of this isomorphism with the map induced by~\eqref{eqn:kunneth} in cohomology is the canonical map
	\[H^\ast_\et(X,\Z/l)\tens H^\ast_\et(Y,\Z/l)\to H^\ast_\et(X\times_kY,\Z/l),\]
	which is also an isomorphism by~\cite[Th.~finitude, Corollaire 1.11]{SGA4.5}.
\end{proof}

\begin{remark}\label{rmk:finitevs}
	Let $X$ be a Noetherian scheme and let $\tau\in\{\h,\qfh,\et\}$. We observed in the proof of Proposition~\ref{prop:etalekunneth} that the pro-space $\Pi_\infty (X_\tau^\wedge)$ is the limit of a cofiltered diagram of spaces whose integral homology groups are finitely generated. It follows from Proposition~\ref{prop:L_F=L^F} that $L_F\Pi_\infty^\tau X$ is $F$-profinite for any prime field $F$, which answers \cite[Question 11.3]{Isaksen:2005} quite generally.
\end{remark}

Now let $\tau$ and $\sigma$ be pretopologies on $\Sch_S$ with $\tau$ finer than $\sigma$, and let $\scr C$ be a small full subcategory of $\Sch_S$ closed under $\sigma$-coverings (but not necessarily under $\tau$-coverings). 
Then the functor $\Pi_\infty^\tau\colon\scr C\to\Pro(\scr S)$ takes values in a cocomplete $\infty$-category and is a $\sigma$-cosheaf according to Lemma~\ref{lem:descent}, so it lifts uniquely to a left adjoint functor
\begin{tikzcentered}
	\diagram{\scr C & \Pro(\scr S)\rlap. \\ \Shv_\sigma(\scr C) & \\};
	\arrows (11-) edge node[above]{$\Pi_\infty^\tau$} (-12) (11) edge (21) (21) edge[dashed] node[below right]{$\LPi_\infty^\tau$} (12);
\end{tikzcentered}

\begin{remark}\label{rmk:pi_infty}
	If $\scr C$ is such that $\scr C_{/X}$ contains the small $\tau$-site of $X$ for any $X\in\scr C$, then $\LPi_\infty^\tau$ is simply the composition
	\begin{tikzcentered}
		\def\colsep{1em}
		\diagram{\Shv_\sigma(\scr C) & & \Shv_\tau(\scr C) & \TopR & & \Pro(\scr S)\rlap, \\};
		\arrows (11-) edge node[above]{$a_\tau$} (-13) (13-) edge[right hook->] (-14) (14-) edge node[above]{$\Pi_\infty$} (-16);
	\end{tikzcentered}
	where for $\scr X$ an $\infty$-topos the inclusion $\scr X\into\TopR$ is $X\mapsto \scr X_{/X}$. Indeed, this composition preserves colimits (by \cite[Proposition 6.3.5.14]{HTT}), and it restricts to $\Pi_\infty^\tau$ on $\scr C$ (\cf{} Remark~\ref{rmk:site}). However, the reader should be warned that we will use $\LPi_\infty^\tau$ in situations where this hypothesis on $\scr C$ is not satisfied.
\end{remark}

\begin{remark}
	The extension $\LPi_\infty^\tau$ involves taking infinite colimits in $\Pro(\scr S)$, which are somewhat ill-behaved (they are not universal, for example). 
	As we will see in \S\ref{sec:EML}, it is sometimes advantageous to consider a variant of $\LPi_\infty^\tau$ taking values in ind-pro-spaces.
\end{remark}

\begin{theorem}\label{thm:main}
	Let $k$ be a separably closed field, $l\neq\car k$ a prime number, and $X$ a quasi-projective scheme over $k$. Let $\tau\in\{\h,\qfh,\et\}$. Then for any $d\geq 0$ there is a canonical equivalence
	\[L_{\Z/l}\Pi_\infty^\tau(\Sym^dX)\simeq L_{\Z/l} \Sym^d\Pi_\infty^\tau(X).\]
\end{theorem}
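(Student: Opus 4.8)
The plan is to reduce to the case $\tau=\qfh$ and then to use that, in the qfh topology, the quotient map $X^d\to\Sym^dX$ is a covering, so that the symmetric power of the scheme is computed by exactly the finite colimit that defines $\Sym^d$ abstractly. For the reduction: since $X$ is quasi-projective over $k$, so is $\Sym^dX$, and hence both $X$ and $\Sym^dX$ are of finite type, thus excellent, over $k$. By Proposition~\ref{prop:qfh=et} one has $L_{\Z/l}\Pi_\infty^\tau Z\simeq L_{\Z/l}\Pi_\infty^\qfh Z$ for every such scheme $Z$ and every $\tau\in\{\h,\qfh,\et\}$ (using that $\Z/l$ is torsion for the comparison with $\h$). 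Applying this with $Z=\Sym^dX$ and with $Z=X$, and using that $L_{\Z/l}$ commutes with $\Sym^d$ (Proposition~\ref{prop:finiteprod} and the discussion following it) and that $\Sym^d$ preserves equivalences, both sides of the asserted equivalence are unchanged, up to equivalence, when $\tau$ is replaced by $\qfh$.

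For the qfh case I would work inside the $\infty$-topos $\Shv_\qfh(\Sch_k^\ft)$ and use the colimit-preserving functor $\LPi_\infty^\qfh\colon\Shv_\qfh(\Sch_k^\ft)\to\Pro(\scr S)$ satisfying $\LPi_\infty^\qfh\circ r_\qfh\simeq\Pi_\infty^\qfh$ (this is how $\LPi_\infty^\qfh$ is built; see also Remark~\ref{rmk:pi_infty}, whose hypothesis holds here since every member of the small qfh site of a finite-type $k$-scheme is again a finite-type $k$-scheme). The orbit category $\Orb(\Sigma_d)$ is finite, so $\Sym^d$ is a finite colimit of finite products; since $a_\qfh$ is left exact and the Yoneda embedding preserves limits, $r_\qfh$ preserves finite products, and by Corollary~\ref{cor:quotient2} it sends the schematic $\Sym^dX$ to $\Sym^dr_\qfh X=\colim_{H\in\Orb(\Sigma_d)^\op}(r_\qfh X)^{o(H)}=\colim_{H}r_\qfh\bigl(X^{o(H)}\bigr)$, where each $X^{o(H)}$ is again of finite type over $k$. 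Applying $\LPi_\infty^\qfh$, which preserves colimits, yields
\[
\Pi_\infty^\qfh(\Sym^dX)\;\simeq\;\colim_{H\in\Orb(\Sigma_d)^\op}\Pi_\infty^\qfh\bigl(X^{o(H)}\bigr).
\]

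Finally I would apply $L_{\Z/l}$, which preserves this finite colimit and preserves finite products (Proposition~\ref{prop:finiteprod}); the Künneth equivalence of Proposition~\ref{prop:etalekunneth}, iterated over the finite set $o(H)$, identifies $L_{\Z/l}\Pi_\infty^\qfh(X^{o(H)})$ with $\bigl(L_{\Z/l}\Pi_\infty^\qfh X\bigr)^{o(H)}$, so
\[
L_{\Z/l}\Pi_\infty^\qfh(\Sym^dX)\;\simeq\;\colim_{H}\bigl(L_{\Z/l}\Pi_\infty^\qfh X\bigr)^{o(H)}\;\simeq\;\Sym^d\bigl(L_{\Z/l}\Pi_\infty^\qfh X\bigr),
\]
the last colimit being formed in $\Pro(\scr S)_{\Z/l}$; since $L_{\Z/l}$ commutes with $\Sym^d$ this equals $L_{\Z/l}\Sym^d\Pi_\infty^\qfh X$, as wanted. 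I expect the one genuinely nontrivial ingredient to be Corollary~\ref{cor:quotient2}: the map $X^d\to\Sym^dX$ is not an étale — not even a flat — covering, which is exactly why one cannot argue with $\Pi_\infty^\et$ directly, and the purpose of the qfh topology is to turn it into a covering without disturbing $\Z/l$-cohomology. Everything else is bookkeeping: one only has to keep straight whether a colimit or a product is taken in $\Pro(\scr S)$ or in $\Pro(\scr S)_{\Z/l}$, using that finite products agree in the two and that $L_{\Z/l}$ preserves finite colimits and finite products.
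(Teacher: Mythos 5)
Your proposal is correct and follows essentially the same route as the paper's proof: reduce to $\tau=\qfh$ via Proposition~\ref{prop:qfh=et} (using that quasi-projective $k$-schemes are excellent, and that $\Z/l$ is torsion for the $\h$ comparison), invoke Corollary~\ref{cor:quotient2} to identify $r_\qfh(\Sym^d X)$ with the finite colimit $\Sym^d r_\qfh X$ in the sheaf $\infty$-topos, push through the colimit-preserving $\LPi_\infty^\qfh$, apply $L_{\Z/l}$ and use Proposition~\ref{prop:etalekunneth} to convert the products of copies of $X$ into powers of $L_{\Z/l}\Pi_\infty^\qfh X$, and finally commute $L_{\Z/l}$ past $\Sym^d$ via Proposition~\ref{prop:finiteprod}. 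The only cosmetic differences are the ambient category ($\Sch_k^\ft$ rather than the category of quasi-projective $k$-schemes) and the order in which the $\tau$-to-$\qfh$ reduction is performed; neither affects the substance.
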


\begin{proof}
	Let $\scr C$ be the category of quasi-projective schemes over $k$. By Corollary~\ref{cor:quotient2}, the representable sheaf functor $r_\qfh\colon \scr C\to\Shv_\qfh(\scr C)$ preserves strict symmetric powers. Using Proposition~\ref{prop:etalekunneth} and the fact that $L_{\Z/l}\LPi_\infty^\qfh$ preserves colimits, we deduce that $L_{\Z/l}\Pi_\infty^\qfh$ preserves strict symmetric powers on $\scr C$. For $\tau\in\{\h,\qfh,\et\}$, we get
	\[L_{\Z/l}\Pi_\infty^\tau(\Sym^dX)\simeq L_{\Z/l}\Pi_\infty^\qfh(\Sym^d X)\simeq \Sym^d L_{\Z/l}\Pi_\infty^\qfh(X)\simeq \Sym^d L_{\Z/l}\Pi_\infty^\tau(X),\]
	the first and last equivalences being from Proposition~\ref{prop:qfh=et}. The functor $L_{\Z/l}$ itself also preserves finite products (Proposition~\ref{prop:finiteprod}) and hence strict symmetric powers, so we are done.
\end{proof}

\begin{remark}
	It is possible to define a natural map
	\[\Pi_\infty ((\Sym^d X)_\tau^\wedge)\to\Sym^d\Pi_\infty (X_\tau^\wedge)\]
	in $\Pro(\scr S)$ inducing the equivalence of Theorem~\ref{thm:main}.
	It suffices to make the square
	\begin{tikzequation}\label{eqn:asd}
		\diagram{\Pi_\infty((X^d)_\tau^\wedge) & \Pi_\infty (X_\tau^\wedge)^d \\ \Pi_\infty((\Sym^dX)_\tau^\wedge) & \Sym^d\Pi_\infty (X_\tau^\wedge) \\};
		\arrows (11-) edge (-12) (11) edge (21) (12) edge (22) (21-) edge[dashed] (-22);
	\end{tikzequation}
	commute. Using the model for the $\tau$-homotopy type discussed in Remark~\ref{rmk:qwe} and the commutativity of the functor of connected components with symmetric powers, the task to accomplish is the following: associate to any $\tau$-hypercover $U_\bullet\to X$ a $\tau$-hypercover $V_\bullet\to\Sym^d X$ refining $\Sym^dU_\bullet\to\Sym^d X$ and such that $V_\bullet\times_{\Sym^d X}X^d\to X^d$ refines $U_\bullet^d\to X^d$, in a simplicially enriched functorial way (\ie, we must define a simplicial functor $\HC_\tau(X)\to\HC_\tau(\Sym^d X)$ and the refinements must be natural). If $\tau=\h$ or $\tau=\qfh$, $\Sym^dU_\bullet \to\Sym^d X$ is itself a $\tau$-hypercover and we are done, but things get more complicated for $\tau=\et$ as symmetric powers of étale maps are not étale anymore.
	
	We refer to~\cite[\S4.5]{Kollar:1997} and~\cite[\S3]{Rydh:2012} for more details on the following ideas. Given a finite group $G$ and quasi-projective $G$-schemes $U$ and $X$, a map $f\colon U\to X$ is $G$-equivariant if and only if it admits descent data for the action groupoid of $G$ on $X$. The map $f$ is \emph{fixed-point reflecting} if it admits descent data for the Čech groupoid of the quotient map $X\to X/G$ (this condition can be expressed more explicitly using the fact that $G\times X\to X\times_{X/G}X$ is faithfully flat: $f$ is fixed-point reflecting if and only if it is $G$-equivariant and induces a fiberwise isomorphism between the stabilizer schemes). Since étale morphisms descend effectively along universally open surjective morphisms \cite[Theorem 5.19]{Rydh:2010}, such as $X\to X/G$, the condition that $f$ reflects fixed points is equivalent to the induced map $U/G\to X/G$ being étale and the square
	\begin{tikzcentered}
		\diagram{U & X \\ U/G & X/G \\};
		\arrows (11-) edge (-12) (11) edge (21) (12) edge (22) (21-) edge (-22);
	\end{tikzcentered}
	being cartesian. If $f$ is $G$-equivariant, there exists a largest $G$-equivariant open subset $\fpr(f)\subset U$ on which $f$ is fixed-point reflecting. Moreover, if $f\colon U\to X$ is an étale cover, the restriction of $f^d$ to $\fpr(f^d)$ is still surjective. Now given an étale hypercover $U_\bullet \to X$, we can define an étale hypercover $V_\bullet\to\Sym^d X$ refining $\Sym^d U_\bullet\to\Sym^d X$ as follows. Let $W_0\subset U_0^d$ be the locus where $U_0^d\to X^d$ reflects fixed points. If $W_\bullet$ has been defined up to level $n-1$, define $W_n$ by the cartesian square
	\begin{tikzcentered}
		\def\colsep{0em}
		\diagram{W_n & \fpr(U_n^d\to (\cosk_{n-1}U_\bullet^d)_n) \\
		(\cosk_{n-1}W_\bullet)_n & (\cosk_{n-1}U_\bullet^d)_n \\};
		\arrows (11-) edge (-12) (11) edge (21) (21-) edge (-22) (12) edge (22);
	\end{tikzcentered}
	in which the vertical maps are $\Sigma_d$-equivariant fixed-point reflecting étale covers (because fixed-point reflecting morphisms are stable under base change).  Finally, let $V_n=W_n/\Sigma_d$. It is then easy to prove that $V_\bullet\to \Sym^d X$ is an étale hypercover with the desired functoriality.
	
	Using the commutativity of~\eqref{eqn:asd}, one can also show that the map induced by $\Pi_\infty ((\Sym^d X)_\et^\wedge)\to\Sym^d\Pi_\infty (X_\et^\wedge)$ in cohomology with coefficients in a $\Z/l$-module coincides with the symmetric Künneth map defined in~\cite[XVII, (5.5.17.2)]{SGA4}. Thus, for $X$ proper, it is possible to deduce Theorem~\ref{thm:main} from~\cite[XVII, Théorème 5.5.21]{SGA4}.
\end{remark}

\section{\texorpdfstring{$\A^1$}{A1}-localization} 
\label{sec:section_name}

Let $S$ be a quasi-compact quasi-separated scheme and $\scr C$ a full subcategory of $\Sch_S$ such that
\begin{enumerate}
	\item objects of $\scr C$ are of finite presentation over $S$;
	\item $S\in\scr C$ and $\A^1_S\in\scr C$;
	\item if $X\in\scr C$ and $U\to X$ is étale, separated, and of finite presentation, then $U\in\scr C$;
	\item $\scr C$ is closed under finite products and finite coproducts.
\end{enumerate}
Following~\cite[\S 0]{MEMS}, we call such a category $\scr C$ \emph{admissible}. 
Note that every smooth $S$-scheme admits an open covering by schemes in $\scr C$.
Let $\Shv_\Nis(\scr C)$ denote the $\infty$-topos of sheaves of spaces on $\scr C$ for the Nisnevich topology, and let $\Shv_{\Nis}(\scr C)_{\A^1}\subset \Shv_\Nis(\scr C)$ be the full subcategory of $\A^1$-invariant Nisnevich sheaves.
We shall denote by
\[
L_{\Nis,\A^1}\colon \PSh(\scr C)\to \Shv_{\Nis}(\scr C)_{\A^1}
\]
the left adjoint to the inclusion.

From now on we fix a prime number $l$ different from the residual characteristics of $S$.
In \S\ref{sec:homotopy_types_of_schemes}, we defined the functor
\[\Pi_\infty^\et\colon\scr C\to \Pro(\scr S),\]
and we observed in \S\ref{sec:homotopy_type_of_symmetric_powers} that it lifts to a left adjoint functor
\[\LPi_\infty^\et\colon\Shv_\Nis(\scr C)\to\Pro(\scr S).\]
By~\cite[VII, Corollaire~1.2]{SGA5}, the composition 
\begin{tikzcentered}
	\diagram{\Shv_\Nis(\scr C) & \Pro(\scr S) & \Pro(\scr S)_{\Z/l} \\};
	\arrows (11-) edge node[above]{$\LPi_\infty^\et$} (-12) (12-) edge node[above]{$L_{\Z/l}$} (-13);
\end{tikzcentered}
 sends any morphism $\A^1\times X\to X$  in $\scr C$ to an equivalence and therefore factors through the $\A^1$-localization functor $L_{\Nis,\A^1}$. That is, there is a commutative square of left adjoint functors
\begin{tikzmath}
	\def\colsep{3.5em}
	\diagram{\Shv_\Nis(\scr C) & \Pro(\scr S) \\ \Shv_{\Nis}(\scr C)_{\A^1} & \Pro(\scr S)_{\Z/l}\rlap. \\};
	\arrows (11-) edge node[above]{$\LPi_\infty^\et$} (-12) (11) edge node[left]{$L_{\Nis,\A^1}$} (21) (21-) edge[dashed] node[above]{$\Et_l$} (-22) (12) edge node[right]{$L_{\Z/l}$} (22);
\end{tikzmath}
The functor $\Et_l$ is called the \emph{$\Z/l$-local étale homotopy type} functor.
Note that if $S$ is Noetherian (\resp{} Noetherian and excellent), we could also use $\LPi_\infty^\qfh$ (\resp{} $\LPi_\infty^\h$) instead of $\LPi_\infty^\et$ in the above diagram, according to Proposition~\ref{prop:qfh=et}.

\begin{remark}\label{rmk:profinite-realization}
	The $\Z/l$-profinite completion $L^{\Z/l}\Et_l$ is the $\infty$-categorical incarnation of the étale realization functor defined by Isaksen in~\cite{Isaksen:2004} as a left Quillen functor, but our results do not require this stronger completion. Note that $\Et_l X$ is already $\Z/l$-profinite if $S$ is Noetherian and $X\in (\Shv_\Nis(\scr C)_{\A^1})^\omega$, by Remarks~\ref{rmk:finitevs} and~\ref{rmk:finitehomology}.
\end{remark}

We now assume that $S=\Spec k$ where $k$ is a separably closed field. 

\begin{lemma}\label{lem:Etdistributive}
	The restriction of $\Et_l$ to the subcategory of compact objects $(\Shv_\Nis(\scr C)_{\A^1})^\omega$ preserves finite products.
\end{lemma}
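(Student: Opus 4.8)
The plan is to reduce the statement to the fact that $L_{\Z/l}$ preserves finite products (Proposition~\ref{prop:finiteprod}) together with the $\A^1$-invariant analogue of the étale Künneth theorem already established in Proposition~\ref{prop:etalekunneth}. Concretely, given $X,Y\in\Shv_\Nis(\scr C)_{\A^1}^\omega$, I want to show that the canonical map $\Et_l(X\times Y)\to \Et_l X\times\Et_l Y$ is an equivalence in $\Pro(\scr S)_{\Z/l}$.

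First I would reduce to the case of representables. Every compact object of $\Shv_\Nis(\scr C)_{\A^1}$ is, up to $\A^1$-localization, a finite colimit of representables $L_{\A^1}r_\Nis U$ with $U\in\scr C$; so I would write $X$ and $Y$ as such finite colimits. Since $\Et_l$ is a left adjoint it preserves these colimits, and by Corollary~\ref{cor:distributive} finite products distribute over finite colimits in $\Pro(\scr S)_{\Z/l}$; hence the product $\Et_l X\times\Et_l Y$ is computed as the corresponding finite colimit of terms $\Et_l(L_{\A^1}r_\Nis U)\times\Et_l(L_{\A^1}r_\Nis V)$. On the other side, the Nisnevich sheaf product $L_{\A^1}r_\Nis U\times L_{\A^1}r_\Nis V$ is again $L_{\A^1}r_\Nis(U\times_S V)$ (products of representables are representable, and $L_{\A^1}$ preserves finite products), so $X\times Y$ is the analogous finite colimit of the $L_{\A^1}r_\Nis(U\times_S V)$; applying the colimit-preserving functor $\Et_l$ and comparing term by term, it suffices to treat $X=L_{\A^1}r_\Nis U$, $Y=L_{\A^1}r_\Nis V$.

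In that case $\Et_l(L_{\A^1}r_\Nis U)\simeq L_{\Z/l}\LPi_\infty^\et r_\Nis U\simeq L_{\Z/l}\Pi_\infty^\et U$ by the very construction of $\Et_l$ and the fact that $\LPi_\infty^\et$ restricts to $\Pi_\infty^\et$ on representables (Remark~\ref{rmk:pi_infty}), and likewise for $V$ and for $U\times_S V$. So the claim becomes exactly
\[
L_{\Z/l}\Pi_\infty^\et(U\times_k V)\simeq L_{\Z/l}\Pi_\infty^\et U\times L_{\Z/l}\Pi_\infty^\et V,
\]
which is Proposition~\ref{prop:etalekunneth} (with $\tau=\et$). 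Since the equivalence there is induced by the canonical projection map, it is compatible with the canonical map $\Et_l(X\times Y)\to\Et_l X\times\Et_l Y$, completing the argument.

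The main obstacle is bookkeeping rather than mathematics: one must check that the finite-colimit decompositions on the two sides are compatible under the canonical comparison map, i.e. that the equivalence is natural enough to be assembled from the representable case. This is where Corollary~\ref{cor:distributive} does the essential work, ensuring that forming the product does not disturb the colimit presentation; once that is in place the reduction is formal. (One should also note that $\Et_l X$ and $\Et_l Y$ land in $\Pro(\scr S)_{\Z/l}$ and that $L_{\Z/l}$ preserves the relevant finite products, which is again Proposition~\ref{prop:finiteprod}.)
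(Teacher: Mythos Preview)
Your proposal is correct and follows essentially the same route as the paper: establish the claim on representables via Proposition~\ref{prop:etalekunneth}, then extend to all compact objects using that $\Et_l$ preserves colimits and that finite products distribute over finite colimits in $\Pro(\scr S)_{\Z/l}$ (Corollary~\ref{cor:distributive}). One small point: compact objects of $\Shv_\Nis(\scr C)_{\A^1}$ are in general only \emph{retracts} of finite colimits of representables, so you should also note that the comparison map for a retract is a retract of an equivalence, hence an equivalence; the paper handles this by phrasing the closure as ``finite colimits and retracts''.
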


\begin{proof}
	By Proposition~\ref{prop:etalekunneth}, the functor $L_{\Z/l}\Pi_\infty^\et$ preserves finite products on $\scr C$. Since the functor $L_{\Nis,\A^1} r\colon \scr C\to \Shv_\Nis(\scr C)_{\A^1}$ also preserves finite products, the restriction of $\Et_l$ to the image of $\scr C$ preserves finite products.
	Finally, since $(\Shv_\Nis(\scr C)_{\A^1})^\omega$ is the closure of the image of $\scr C$ under finite colimits and retracts, the result follows from Corollary~\ref{cor:distributive}.
\end{proof}

Let $p\geq q\geq 0$. We define the $\Z/l$-local mixed spheres $S^{p,q}_l\in\Pro(\scr S)_{\Z/l,*}$ by
\[S^{1,0}_l=L_{\Z/l}S^1=K(\Z_l,1),\quad S^{1,1}_l=K(T_l\mu,1),\quad S^{p,q}_l=(S^{1,0}_l)^{\wedge p-q}\wedge (S^{1,1}_l)^{\wedge q},\]
where $\mu$ is the group of roots of unity in $k$ and $T_l\mu=\lim_n \mu_{l^n}$ is its $l$-adic Tate module. Here we regard $\Z_l$ and $T_l\mu$ as pro-groups.
Of course, $S^{p,q}_l\simeq L_{\Z/l}S^p$, but if $q>0$ this equivalence depends on infinitely many noncanonical choices (\viz, an isomorphism $\Z_l\simeq T_l\mu$). By Proposition~\ref{prop:L_F=L^F}, $S^{p,q}_l$ is $\Z/l$-profinite.

Note that the functor $\Et_l$ preserves pointed objects, since $\Pi_\infty^\et(\Spec k)\simeq\ast$.

\begin{proposition}\label{prop:spheres}
	Let $p\geq q\geq 0$. Then $\Et_lS^{p,q}\simeq S^{p,q}_l$.
\end{proposition}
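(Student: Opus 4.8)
The plan is to reduce to the motivic spheres $\P^1$ and $\G_m$ and then compute the $\Z/l$-local étale homotopy type of these explicit schemes. First I would recall the standard motivic identifications $S^{1,1}\simeq\G_m$ and $S^{2,1}\simeq\P^1$ in $\Shv_\Nis(\scr C)_{\A^1}$, together with the cofiber sequence relating them and the simplicial circle $S^{1,0}$; all three are compact objects. Since $\Et_l$ restricted to $\Shv_\Nis(\scr C)_{\A^1}^\omega$ preserves finite products (Lemma~\ref{lem:Etdistributive}) and $L_{\Z/l}$ preserves finite products and hence smash products of pointed objects, it suffices to treat the two ``generators'' $S^{1,0}$ and $S^{1,1}$ separately: $\Et_l S^{1,0}\simeq S^{1,0}_l$ and $\Et_l S^{1,1}\simeq S^{1,1}_l$, after which the general case $S^{p,q}=(S^{1,0})^{\wedge p-q}\wedge(S^{1,1})^{\wedge q}$ follows by the multiplicativity just noted (taking a little care that smashing with $S^{1,0}$ commutes with $\Et_l$, which one gets from the pushout presentation of the suspension and the fact that $L_{\Z/l}\LPi_\infty^\et$ preserves colimits).

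For $S^{1,0}$: the simplicial circle is the pushout $\ast\sqcup_{\Spec k\sqcup\Spec k}\ast$ in $\Shv_\Nis(\scr C)$, and since $\LPi_\infty^\et$ preserves colimits and $\Pi_\infty^\et(\Spec k)\simeq\ast$ (as $k$ is separably closed), we get $\LPi_\infty^\et S^{1,0}\simeq S^1\in\scr S\subset\Pro(\scr S)$; applying $L_{\Z/l}$ gives $L_{\Z/l}S^1=K(\Z_l,1)=S^{1,0}_l$ by definition.

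For $S^{1,1}\simeq\G_m=\A^1\setminus 0$: here the key computation is the étale homotopy type of the punctured affine line over a separably closed field. One way is to use that $\G_m$ is a $K(\pi,1)$ for the étale fundamental group, which is $\lim_n\mu_{l^n}$ after pro-$l$ completion (the tame quotient away from the characteristic), i.e. $\pi_1^\et(\G_m)$ has $\Z/l$-profinite completion $T_l\mu$ and all higher étale homotopy groups vanish $\Z/l$-locally; this is precisely the statement that $L_{\Z/l}\Pi_\infty^\et\G_m\simeq K(T_l\mu,1)=S^{1,1}_l$. Alternatively, and perhaps more cleanly, one uses the Gysin/cofiber sequence $\G_m\to\A^1\to\P^1$ (or the standard $\A^1$-equivalence $\P^1\simeq S^{2,1}$ together with $\A^1\simeq\ast$): applying $\Et_l$, which kills $\A^1$ and preserves the relevant colimits, expresses $\Et_l\P^1$ as a suspension of $\Et_l\G_m$, and a direct cohomological check (the étale cohomology of $\P^1_k$ with $\Z/l$ coefficients is $\Z/l$ in degrees $0,2$, with the degree-$2$ class a Tate twist) pins down $\Et_l\G_m\simeq K(T_l\mu,1)$ on the nose, since a $\Z/l$-local pro-space with the cohomology of a circle and nilpotent $\pi_1$ is forced to be that Eilenberg--Mac Lane pro-space.

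The main obstacle is the $S^{1,1}$ case: one must correctly identify the Tate twist, i.e. verify that the relevant pro-group is $T_l\mu=\lim_n\mu_{l^n}$ and not merely abstractly $\Z_l$. This is where separable closedness of $k$ and the hypothesis $l\neq\car k$ enter essentially (to ensure the Kummer sequence gives $H^1_\et(\G_m,\mu_{l^n})\cong\Z/l^n$ functorially in $n$ and that higher cohomology vanishes), and it is the only step that genuinely uses the arithmetic of $\mu_{l^n}$ rather than formal properties of $\Et_l$. Everything else is bookkeeping with the colimit-preservation and finite-product-preservation properties already established.
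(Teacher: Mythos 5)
Your overall strategy matches the paper's: reduce to the ``generators'' $S^{1,0}$ and $S^{1,1}$ via multiplicativity of $\Et_l$ on compacts (Lemma~\ref{lem:Etdistributive} plus distributivity in $\Pro(\scr S)_{\Z/l}$), handle $S^{1,0}$ by colimit-preservation, and then do the real work at $\G_m$. Where you diverge is the $\G_m$ step. The paper constructs an \emph{explicit} comparison map $\phi\colon\Pi_\infty^\et\G_m\to K(T_l\mu,1)$ from the \v{C}ech nerves of the multiplication-by-$l^n$ self-covers of $\G_m$ (whose degree-zero connected components give $B\mu_{l^n}$), and then verifies that $\phi$ is a $\Z/l$-homological equivalence using the cohomology computation $H^i_\et(\G_m,M)$ from SGA5. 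Your first route instead appeals to the theorem that $\G_m$ is an \'etale $K(\pi,1)$ with $\pi_1^\et(\G_m)^{\wedge l}\simeq T_l\mu$; this is correct but outsources the key content to a black-box (Artin's $K(\pi,1)$ theorem for affine curves) and, as phrased, slides between ``higher homotopy groups vanish'' and ``cohomology is concentrated in degrees $\leq 1$'' in a way that deserves care: $L_{\Z/l}$ is a homological localization of pro-spaces, not a pro-$l$ completion of homotopy groups, so the clean criterion is cohomological, exactly as the paper checks. Your second route via the cofiber sequence $\G_m\to\A^1\to\P^1$ and the cohomology of $\P^1$ is also workable, but recovering $\Et_l\G_m$ from its suspension requires an argument of the same cohomological flavor, so you haven't really saved anything. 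One advantage of the paper's \v{C}ech-nerve construction that your routes forgo is that it produces a concrete \emph{map} realizing the equivalence, which is quietly used later (in the proof of Theorem~\ref{thm:EML}, where the canonical map $S^{1,1}_l\to c\tilde\Z\tau_{\leq *}S^{1,1}_l$ enters). If you keep your approach, you should state the $K(\pi,1)$ input explicitly and phrase the vanishing in terms of cohomology with $\Z/l$-module coefficients.
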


\begin{proof}
	This is obvious if $q=0$. By Lemma~\ref{lem:Etdistributive}, it remains to treat the case $p=q=1$. 
	The étale $\mu_{l^n}$-torsor $l^n\colon \G_m\to \G_m$ is classified by a morphism $\Pi_\infty^\et\G_m\to K(\mu_{l^n},1)$. In the limit over $n\geq 0$, we obtain a morphism of pro-spaces $\phi\colon \Pi_\infty^\et\G_m \to K(T_l\mu,1)$.
	We claim that $\phi$ is a $\Z/l$-homological equivalence, i.e., it induces an isomorphism in cohomology with coefficients in any $\Z/l$-module $M$.
	By~\cite[VII, Proposition 1.3(i)(c)]{SGA5}, we have
	\[H^i_\et(\G_m,M)=\begin{cases} M & \text{if $i=0$,} \\ \Hom(\mu_l,M) & \text{if $i=1$,} \\ 0 & \text{if $i\geq 2$.}\end{cases} \]
	In fact, this computation shows that the morphism $\Pi_\infty^\et\G_m\to K(\mu_l,1)$ induces an isomorphism on $H^i(-,M)$ for $i\leq 1$. The same is true for the projection $K(T_l\mu,1)\to K(\mu_l,1)$, hence also for $\phi$. Since both the source and the target of $\phi$ have vanishing cohomology in degrees $\geq 2$,
	this completes the proof.
\end{proof}

\section{Group completion and strictly commutative monoids} 
\label{sec:group_completions}

Let $\scr C$ be an $\infty$-category with finite products. Recall from~\cite[\S2.4.2]{HA} that a commutative monoid in $\scr C$ is a functor $M\colon \Fin_\ast\to\scr C$ such that for all $n\geq 0$ the canonical map $M(\langle n\rangle)\to M(\langle 1\rangle)^n$ is an equivalence. We let $\CMon(\scr C)$ denote the full subcategory of $\Fun(\Fin_\ast,\scr C)$ spanned by the commutative monoids.

A commutative monoid $M$ in $\scr C$ has an underlying simplicial object, namely its restriction along the functor $\Cut\colon\Delta^\op\to\Fin_\ast$ sending $[n]$ to the finite set of cuts of $[n]$ pointed at the trivial cut, which can be identified with $\langle n\rangle$. 
The commutative monoid $M$ is called \emph{grouplike} if its underlying simplicial object is a groupoid object in the sense of~\cite[Definition 6.1.2.7]{HTT}. This is equivalent to requiring both shearing maps $M\times M\to M\times M$ to be equivalences.
We denote by $\CMon^\gr(\scr C)\subset \CMon(\scr C)$ the full subcategory of grouplike objects.

If $f\colon\scr C\to\scr D$ preserves finite products (and $\scr C$ and $\scr D$ admit finite products), then it induces a functor $\CMon(\scr C)\to\CMon(\scr D)$ by postcomposition; this functor clearly preserves grouplike objects and hence restricts to a functor $\CMon^\gp(\scr C)\to\CMon^\gp(\scr D)$. We will continue to use $f$ to denote either induced functor.

\begin{lemma}\label{lem:qr5w}
	Suppose that $f\colon\scr C\to\scr D$ preserves finite products and has a right adjoint $g$. Then the functors $\CMon(\scr C)\to\CMon(\scr D)$ and $\CMon^\gr(\scr C)\to\CMon^\gr(\scr D)$ induced by $f$ are left adjoint to the corresponding functors induced by $g$.
\end{lemma}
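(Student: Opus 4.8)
The plan is to exhibit the claimed adjunctions by a formal argument, reducing both cases to the universal property of postcomposition adjunctions on functor categories. First I would recall that since $f$ preserves finite products, postcomposition with $f$ defines a functor $f_*\colon\Fun(\Fin_*,\scr C)\to\Fun(\Fin_*,\scr D)$, and likewise $g$ induces $g_*$ in the other direction; since $g$ is a right adjoint it preserves all limits, in particular finite products, so $g_*$ really does land in (and indeed restricts from) the appropriate functor categories. The adjunction $f\dashv g$ upgrades to an adjunction $f_*\dashv g_*$ between the functor $\infty$-categories $\Fun(\Fin_*,\scr C)$ and $\Fun(\Fin_*,\scr D)$ by the standard fact that an adjunction on $\scr C\rightleftarrows\scr D$ induces one on $\Fun(\scr A,-)$ for any $\scr A$ (compose the unit and counit objectwise; the triangle identities are checked objectwise).

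Next I would argue that this adjunction restricts to the full subcategories $\CMon(\scr C)\subset\Fun(\Fin_*,\scr C)$ and $\CMon(\scr D)\subset\Fun(\Fin_*,\scr D)$. The key point is that for a full subcategory inclusion, an adjoint pair restricts as soon as each of the two functors preserves the respective subcategory. That $f_*$ sends commutative monoids to commutative monoids is exactly the observation already made in the paragraph preceding the lemma (it uses that $f$ preserves finite products, so the Segal condition $M(\langle n\rangle)\xrightarrow{\sim} M(\langle 1\rangle)^n$ is preserved). For $g_*$ the same argument applies since $g$, being a right adjoint, also preserves finite products. Hence the restricted functors $f_*\colon\CMon(\scr C)\to\CMon(\scr D)$ and $g_*\colon\CMon(\scr D)\to\CMon(\scr C)$ still form an adjoint pair, with unit and counit inherited from the ambient functor categories (equivalently: the composite $\CMon(\scr C)\into\Fun(\Fin_*,\scr C)\xrightarrow{f_*}\Fun(\Fin_*,\scr D)$ factors through $\CMon(\scr D)$ and has a right adjoint given by the analogous factorization of $g_*$, because a right adjoint to a functor into a full subcategory is computed in the ambient category).

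Finally, for the grouplike variant I would invoke the fact, recalled just above the lemma, that $f_*$ and $g_*$ both preserve grouplike objects: grouplikeness is detected by the shearing maps $M\times M\to M\times M$ being equivalences, a condition phrased entirely in terms of finite products and equivalences, hence preserved by any finite-product-preserving functor, and in particular by both $f$ and $g$. Therefore the adjunction restricts once more to the full subcategories $\CMon^\gr(\scr C)$ and $\CMon^\gr(\scr D)$ by the same general principle. I expect the only mildly technical point to be the very first reduction — promoting $f\dashv g$ to $f_*\dashv g_*$ on functor $\infty$-categories — but this is standard (it can be cited from \cite{HTT}, or seen via the description of adjunctions by unit/counit satisfying the triangle identities, all of which are checked pointwise); everything after that is a routine application of the principle that adjunctions restrict to full subcategories closed under both functors.
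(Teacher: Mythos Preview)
Your proposal is correct and follows exactly the same approach as the paper: lift the adjunction $f\dashv g$ to $\Fun(\Fin_*,-)$ by postcomposition, then observe that both $f$ and $g$ (the latter as a right adjoint) preserve finite products and hence preserve the full subcategories of (grouplike) commutative monoids, so the adjunction restricts. The paper's proof is a one-sentence version of precisely what you wrote.
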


\begin{proof}
	The functors $f$ and $g$ induce adjoint functors between $\infty$-categories of $\Fin_*$-diagrams, and it remains to observe that they both preserve the full subcategory of (grouplike) commutative monoids.
\end{proof}

\begin{definition}\label{def:distributive}
	An $\infty$-category $\scr C$ is \emph{distributive} if it is presentable and if finite products in $\scr C$ distribute over colimits. A functor $f\colon\scr C\to\scr D$ between distributive $\infty$-categories is \emph{distributive} if it preserves colimits and finite products.
\end{definition}

For example, for any $\infty$-topos $\scr X$, the truncation functors $\tau_{\leq n}\colon\scr X\to\scr X_{\leq n}$ are distributive, and for any admissible category $\scr C\subset\Sch_S$ and any topology $\tau$ on $\scr C$, the localization functor $L_{\tau,\A^1}\colon\PSh(\scr C)\to\Shv_\tau(\scr C)_{\A^1}$ is distributive. 

If $\scr C$ is distributive, then the $\infty$-category $\CMon(\scr C)$ is presentable by~\cite[Corollary 3.2.3.5]{HA}, and the subcategory $\CMon^\gp(\scr C)$ is strongly reflective since it is accessible (by~\cite[Proposition 6.1.2.9]{HTT} and~\cite[Proposition 5.4.6.6]{HTT}) and is clearly closed under limits and sifted colimits. That is, there exists a \emph{group completion} functor
\[\CMon(\scr C)\to \CMon^\gp(\scr C), \quad M\mapsto M^\gp,\]
which exhibits $\CMon^\gp(\scr C)$ as an accessible localization of $\CMon(\scr C)$.\footnote{In fact, $\CMon(\scr C)$ and $\CMon^\gr(\scr C)$ are presentable whenever $\scr C$ is, and the group completion functor exists in that generality, see \cite[\S 4]{GGN}. However, these constructions may not behave as expected if $\scr C$ is not distributive; for example, the forgetful functor $\CMon(\scr C)\to \scr C$ and the inclusion $\CMon^\gp(\scr C)\subset \CMon(\scr C)$ need not preserve sifted colimits.}

\begin{lemma}\label{lem:grp}
	Let $f\colon\scr C\to\scr D$ be a distributive functor and let $M\in \CMon(\scr C)$. 
	Then $f(M^\gp)\simeq f(M)^\gp$.
\end{lemma}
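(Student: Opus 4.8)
The plan is to reduce the statement to the fact that the group completion functor is a localization, characterized by a universal property, and that this universal property is preserved by passing along an adjunction. First I would recall the setup: $f\colon\scr C\to\scr D$ is distributive, so in particular it preserves finite products and colimits; by the results invoked before Lemma~\ref{lem:grp}, both $\CMon(\scr C)$ and $\CMon(\scr D)$ are presentable and each contains $\CMon^\gp$ as an accessible (strongly reflective) localization, with reflectors $M\mapsto M^+$. Since $f$ preserves finite products, it induces a functor $f_*\colon\CMon(\scr C)\to\CMon(\scr D)$ (and likewise on grouplike objects) by postcomposition, as recorded just before Lemma~\ref{lem:qr5w}.

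The key observation is that $f_*$ is a \emph{left adjoint} and sends group-completion unit maps to group-completion unit maps. Concretely: because $f$ is presentable and colimit-preserving, it has a right adjoint $g$ by the adjoint functor theorem; then Lemma~\ref{lem:qr5w} gives that $f_*\colon\CMon(\scr C)\to\CMon(\scr D)$ is left adjoint to $g_*\colon\CMon(\scr D)\to\CMon(\scr C)$, and similarly $f_*^\gp\dashv g_*^\gp$ on the grouplike subcategories. Now I would chase the square of adjunctions
\[
\begin{tikzcd}[ampersand replacement=\&]
\CMon(\scr C) \arrow[r, "f_*"] \arrow[d, shift left] \& \CMon(\scr D) \arrow[d, shift left] \\
\CMon^\gp(\scr C) \arrow[r, "f_*^\gp"] \arrow[u, hook, shift left] \& \CMon^\gp(\scr D)\rlap. \arrow[u, hook, shift left]
\end{tikzcd}
\]
The right adjoints commute on the nose: the inclusion $\CMon^\gp(\scr D)\hookrightarrow\CMon(\scr D)$ followed by $g_*$ equals $g_*^\gp$ followed by $\CMon^\gp(\scr C)\hookrightarrow\CMon(\scr C)$, since $g$ (hence $g_*$) simply preserves the property of being grouplike — being grouplike is a condition on the underlying simplicial/groupoid object and is detected by finite-product-preserving functors, which $g$ is as a right adjoint. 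By uniqueness of adjoints, the left adjoints commute up to canonical equivalence: $f_*^\gp\circ(-)^+ \simeq (-)^+\circ f_*$, which is exactly the asserted equivalence $f(M^+)\simeq f(M)^+$, compatibly with the unit maps $M\to M^+$.

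The only point requiring care — and the main obstacle — is verifying that $g_*$ genuinely preserves grouplike objects, i.e.\ that the square of right adjoints strictly commutes; everything else is formal two-out-of-three for adjunctions. This comes down to: a commutative monoid $N$ in $\scr D$ is grouplike iff its shearing maps $N\times N\to N\times N$ are equivalences, and $g$ preserves finite products and detects equivalences no worse than any right adjoint does — in fact the shearing maps of $g_*N$ are obtained by applying $g$ to those of $N$ (using that $g$ preserves products), so if $N$ is grouplike then $g_*N$ has invertible shearing maps and is grouplike. Hence the inclusion-then-$g_*$ functor lands in $\CMon^\gp(\scr C)$ and agrees with $g_*^\gp$-then-inclusion. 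I would also remark that distributivity of $f$ (not just colimit-and-product preservation) is what guarantees the relevant presentability and the existence of the group completion reflectors on \emph{both} sides, so the adjoint-functor-theorem argument for $g$ applies. With this in hand the proof is a one-line consequence of the uniqueness of left adjoints.
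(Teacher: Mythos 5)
Your proposal is correct and takes essentially the same route as the paper: apply Lemma~\ref{lem:qr5w} to get an adjoint $g$ (whose existence follows from distributivity via the adjoint functor theorem), observe that the square of right adjoints commutes because $g$, being finite-product-preserving, sends grouplike objects to grouplike objects (a fact the paper already records just before Lemma~\ref{lem:qr5w}), and conclude by uniqueness of left adjoints. The paper's own proof is a one-line compression of exactly this argument.
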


\begin{proof}
	By Lemma~\ref{lem:qr5w}, the square
	\begin{tikzcentered}
		\diagram{\CMon(\scr C) & \CMon(\scr D) \\ \CMon^\gp(\scr C) & \CMon^\gp(\scr D)\\};
		\arrows (11-) edge node[above]{$f$} (-12) (11) edge node[left]{$\gp$} (21) (21-) edge node[above]{$f$} (-22) (12) edge node[right]{$\gp$} (22);
	\end{tikzcentered}
	has a commutative right adjoint and hence is commutative.
\end{proof}

\begin{remark}
	If $\scr X$ is an $\infty$-topos, group completion of commutative monoids in $\scr X$ preserves $0$-truncated objects. As in the proof of Lemma~\ref{lem:symmtopos}, it suffices to prove this for $\scr X=\scr S$, where it follows from the McDuff–Segal group completion theorem (see \cite{ThomasNote} for a modern proof of the latter).
\end{remark}

 We can define a generalized ``free $\Z$-module'' functor in any distributive $\infty$-category as follows.
 Let $\FFree_\N$ (\resp{} $\FFree_\Z$) be the full subcategory of $\CMon(\Set)$ spanned by $(\N^n,+)$ (\resp{} by $(\Z^n,+)$) for $n\geq 0$.
 If $\scr C$ is an $\infty$-category with finite products, we shall denote by
 \[
 \Mod_\N(\scr C)\subset \Fun(\FFree_\N^\op,\scr C)\quad\text{and}\quad\Mod_\Z(\scr C)\subset\Fun(\FFree_\Z^\op,\scr C)
 \]
 the full subcategories of finite-product-preserving functors. The objects of $\Mod_\N(\scr C)$ are called \emph{strictly commutative monoids} in $\scr C$. 
 Since $\FFree_\N$ is semiadditive \cite[Definition 6.1.6.13]{HA}, the $\infty$-category $\Mod_\N(\scr C)$ is also semiadditive \cite[Corollary 2.4]{GGN} and the forgetful functor
 \[
 \Mod_\N(\scr C)\to \scr C, \quad M\mapsto M(\N),
 \]
  factors uniquely through the $\infty$-category $\CMon(\scr C)$ \cite[Corollary 2.5(iii)]{GGN}. 
 Similarly, as $\FFree_\Z$ is additive, the $\infty$-category $\Mod_\Z(\scr C)$ is additive and the forgetful functor $\Mod_\Z(\scr C)\to\scr C$ factors uniquely through $\CMon^\gp(\scr C)$.

 Assume now that $\scr C$ is distributive. The forgetful functors $\Mod_\N(\scr C)\to \scr C$ and $\Mod_\Z(\scr C) \to \scr C$ then
 preserve limits and sifted colimits, hence admit left adjoints
\[
\N\colon \scr C\to \Mod_\N(\scr C)\quad\text{and}\quad \Z\colon \scr C\to \Mod_\Z(\scr C).
\]
Since the $\infty$-categories $\Mod_\N(\scr C)$ and $\Mod_\Z(\scr C)$ are pointed, we also have reduced versions $\tilde\N X$ and $\tilde\Z X$ when $X$ is a pointed object of $\scr C$. More precisely, $\tilde \N$ is the unique colimit-preserving extension of $\N$ to $\scr C_*$, and similarly for $\tilde\Z$.

\begin{lemma}\label{lem:strict-gp}
	If $\scr C$ is an $\infty$-category with finite products, the square
 \begin{tikzmath}
 	\diagram{\Mod_\Z(\scr C) & \Mod_\N(\scr C) \\
	\CMon^\gr(\scr C) & \CMon(\scr C) \\};
	\arrows (11-) edge (-12) (11) edge node[left]{\textnormal{forget}} (21) (21-) edge[c->] (-22) (12) edge node[right]{\textnormal{forget}} (22);
 \end{tikzmath}
 is cartesian. If $\scr C$ is distributive, the top functor $\Mod_\Z(\scr C) \to \Mod_\N(\scr C)$ admits a left adjoint such that the following square commutes:
   \begin{tikzmath}
   	\diagram{\Mod_\Z(\scr C) & \Mod_\N(\scr C) \\
  	\CMon^\gr(\scr C) & \CMon(\scr C)\rlap. \\};
  	\arrows (11-) edge[<-] (-12) (11) edge node[left]{\textnormal{forget}} (21) (21-) edge[<-] node[above]{$\gp$} (-22) (12) edge node[right]{\textnormal{forget}} (22);
   \end{tikzmath}
	In particular, $\tilde\Z X\simeq (\tilde\N X)^\gp$ for any $X\in\scr C_*$.
\end{lemma}

\begin{proof}
	Since $\FFree_\N$ is semiadditive and $\FFree_\Z$ is additive, the forgetful functors $\CMon^\gp(\scr C)\to \CMon(\scr C)\to\scr C$ induce equivalences
	\begin{align*}
	\Mod_\N(\scr C)&\simeq \Mod_\N(\CMon(\scr C)), \\
	\Mod_\Z(\scr C)&\simeq \Mod_\Z(\CMon^\gp(\scr C)).
	\end{align*}
	The key point is that the $\infty$-category $\FFree_\Z$ is obtained from $\FFree_\N$ be group-completing the mapping spaces, so that $\FFree_\N\to\FFree_\Z$ is the universal finite-product-preserving functor to an additive $\infty$-category. Therefore, the forgetful functor
	\[
	\Mod_\Z(\scr C) \to \Mod_\N(\scr C)
	\]
	can be identified with the functor
	\[
	 \Mod_\Z(\CMon^\gp(\scr C))\simeq 
	\Mod_\N(\CMon^\gp(\scr C)) \to
	\Mod_\N(\CMon(\scr C)).
	\]
	This description immediately implies the claims: the first claim follows since a finite-product-preserving functor $M\colon \FFree_\N^\op\to \CMon(\scr C)$ lands in $\CMon^\gr(\scr C)$ if and only if $M(\N)$ is grouplike, and the second claims follows since group completion preserves finite products.
\end{proof}

We can describe free strictly commutative monoids more concretely using strict symmetric powers (see \S\ref{sec:symmetric_powers}):

\begin{lemma}\label{lem:N}
	Let $\scr C$ be a distributive $\infty$-category. Then the composite functor
	\[
	\scr C \xrightarrow{\N} \Mod_\N(\scr C) \xrightarrow{\mathrm{forget}} \scr C
	\]
	is given by $\displaystyle X\mapsto \coprod_{d\geq 0}\Sym^d X$.
\end{lemma}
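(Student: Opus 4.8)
The plan is to construct a natural transformation $\Theta\colon\coprod_{d\ge 0}\Sym^d(-)\Rightarrow T$, where $T=\mathrm{forget}\circ\N$ is the monad of the statement, and to show it is an equivalence. To build $\Theta$ I would use that $\N X$ is a genuine functor $\FFree_\N^\op\to\scr C$: since it preserves finite products, for each $d$ the diagram $H\mapsto(TX)^{o(H)}$ over $\Orb(\Sigma_d)^\op$ computing $\Sym^d(TX)$ is the image under $\N X$ of the diagram $H\mapsto\N^{o(H)}$ in $\FFree_\N^\op$; and the cardinality--weighted sums $\sum_{O\in o(H)}\lvert O\rvert\cdot e_O\in\N^{o(H)}$ assemble into a cocone of that diagram onto $\N=\N^1$ — a routine check in the $1$-category $\FFree_\N$ — which upon applying $\N X$ yields a map $\Sym^d(TX)\to TX$. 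Composing $\Sym^d$ of the unit $X\to TX$ with this, and taking the coproduct over $d$, produces $\Theta$. (The detour through $\FFree_\N$ is essential: the cruder recipe — observe that the $d$-fold multiplication of $TX$ is ``$\Sigma_d$-invariant'' — does not work, since that invariance is only coherent in the $\E_\infty$ sense, which does not suffice to map out of the orbit-indexed colimit $\Sym^d$; the strict commutative monoid structure, i.e.\ the actual functor out of $\FFree_\N^\op$, is needed.)

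Next I would observe that $\coprod_d\Sym^d$, $T$, and $\Theta$ are all compatible with distributive functors. For $\coprod_d\Sym^d$ this is immediate, since a distributive functor preserves coproducts and commutes with $\Sym^d$ (as noted after the definition of $\Sym^d$). For $T$: a distributive functor $f\colon\scr C\to\scr D$ has a right adjoint $g$, and postcomposition yields an adjunction $f_*\dashv g_*$ between the categories $\Fun^\times(\FFree_\N^\op,-)$, exactly as in the proof of Lemma~\ref{lem:qr5w}; since the forgetful functors satisfy $\mathrm{forget}_{\scr D}\circ f_*=f\circ\mathrm{forget}_{\scr C}$, passing to left adjoints gives $\N_{\scr D}\circ f\simeq f_*\circ\N_{\scr C}$, hence $T\circ f\simeq f\circ T$, compatibly with the units and with the commutative monoid structures on $T$. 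Chasing the construction shows $f(\Theta_X)\simeq\Theta_{fX}$.

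I would then reduce to $\scr C=\scr S$. Let $\mathrm{Fin}$ be the ordinary category of finite sets; then $\Fun(\mathrm{Fin},\scr S)$ is distributive, and the representable presheaf at $[1]$, namely the tautological functor $G\colon\mathrm{Fin}\into\scr S$, is a distinguished object. For $X$ in any distributive $\scr C$, the colimit-preserving extension of the product-preserving functor $[n]\mapsto X^n\colon\mathrm{Fin}^\op\to\scr C$ along the Yoneda embedding preserves finite products (as $\scr C$ is distributive and $\mathrm{Fin}^\op$ has finite products), hence is a distributive functor $f_X\colon\Fun(\mathrm{Fin},\scr S)\to\scr C$ with $f_X(G)\simeq X$. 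By the previous paragraph $\Theta_X\simeq f_X(\Theta_G)$, so it suffices to show $\Theta_G$ is an equivalence. But everything in sight is computed pointwise on $\Fun(\mathrm{Fin},\scr S)$ — one has $\Fun^\times(\FFree_\N^\op,\Fun(\mathrm{Fin},\scr S))\simeq\Fun(\mathrm{Fin},\Fun^\times(\FFree_\N^\op,\scr S))$, with the forgetful functors and their left adjoints computed objectwise, and colimits, finite products and $\Sym^d$ are objectwise as well — so $\Theta_G$ is an equivalence if and only if $\Theta_A$ is one in $\scr S$ for every finite set $A$.

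Finally, for $A$ a finite set, Lemma~\ref{lem:symmtopos} identifies $\Sym^d A$ with the ordinary symmetric power $A^d/\Sigma_d$, a discrete set, so $\coprod_d\Sym^d A$ is the underlying set of the free commutative monoid $\N^{\oplus A}$; on the other side, $\N A$ represents $M\mapsto\Map_{\scr S}(A,M(\N))$, and since $A$ is finite so does the object of $\Fun^\times(\FFree_\N^\op,\scr S)$ represented by $\N^{\oplus A}\in\FFree_\N$, whence $\N A$ is the discrete commutative monoid $\N^{\oplus A}$ and $TA=\N^{\oplus A}$; under these identifications $\Theta_A$ is the tautological bijection $(a_1,\dots,a_d)\mapsto e_{a_1}+\dots+e_{a_d}$, which finishes the argument. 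The genuine content is concentrated in the first two paragraphs — constructing $\Theta$ and making it manifestly natural with respect to distributive functors; once $\Fun(\mathrm{Fin},\scr S)$ has been recognized as the free distributive $\infty$-category on one object, the rest is bookkeeping.
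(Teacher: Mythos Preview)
Your proof is correct and follows the same overarching strategy as the paper's --- both reduce to the universal object in $\Fun(\Fin,\scr S)$ --- but the execution differs in two places.

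First, you take care to construct the comparison map $\Theta$ explicitly, via the cocone $1\mapsto\sum_O\lvert O\rvert e_O$ in $\FFree_\N$, and to verify its compatibility with distributive functors. The paper leaves this implicit: it simply asserts that ``it suffices to check this for the universal $X$'', taking for granted that both endofunctors are preserved by distributive functors and hence determined by their value on the universal object. Your treatment makes this step honest.

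Second, and more substantively, the endpoint computations differ. The paper, having reduced to $\scr C=\scr S$, invokes the model-categorical machinery of \cite[Proposition~5.5.9.1]{HTT}: the forgetful functor $\Fun^\times(\FFree_\N^\op,\Set_\Delta)\to\Set_\Delta$ is right Quillen, its $1$-categorical left adjoint is the free strictly commutative simplicial monoid $X\mapsto\coprod_d X^d/\Sigma_d$, and the latter computes the derived $\Sym^d$ on cofibrant objects by the CW discussion of~\S\ref{sec:symmetric_powers}. You instead push the pointwise reduction one step further, to finite sets $A\in\scr S$, and check by hand that $\N A$ is the representable at $\N^{\oplus A}\in\FFree_\N$ (using that $A$ is finite and $\N^{\oplus A}$ is the $A$-fold coproduct in $\FFree_\N$), so that $TA=\N^{\oplus A}$ is discrete; on the other side Lemma~\ref{lem:symmtopos} gives $\Sym^dA=A^d/\Sigma_d$, and $\Theta_A$ is the tautological bijection. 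This avoids model categories entirely and is arguably more elementary; the price is the upfront work of building $\Theta$.
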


\begin{proof}
	It suffices to check this for the universal $X$, which lives in the distributive $\infty$-category $\Fun(\Fin,\scr S)$.
	We may thus assume $\scr C=\scr S$. In that case, the forgetful functor $\Mod_\N(\scr S)\to \scr S$ is modeled by the right Quillen functor $\Mod_\N(\Set_\Delta) \to \Set_\Delta$ \cite[Proposition 5.5.9.1]{HTT}, whose left adjoint is given by the desired formula. Since the functor $\Sym^d$ on $\scr S$ can be computed using symmetric powers of CW complexes, this completes the proof.
\end{proof}

\begin{remark}
	Lemma~\ref{lem:N} shows that the endofunctor $X\mapsto \coprod_{d\geq 0}\Sym^d X$ of any distributive $\infty$-category has a canonical structure of monad. Its multiplication involves a canonical equivalence $\Sym^d(X\amalg Y) \simeq \coprod_{e+f=d}\Sym^eX\times \Sym^fY$ and a canonical map $\Sym^d\Sym^e X \to \Sym^{de} X$.
\end{remark}

If $\scr C$ is presentable and $\scr A$ is a small $\infty$-category with finite products, we have 
\[
\Fun^\times(\scr A,\scr C) \simeq \scr C\otimes \Fun^\times(\scr A,\scr S)
\] 
where $\otimes$ denotes the tensor product of presentable $\infty$-categories (this follows immediately from \cite[Proposition 4.8.1.17]{HA}). 
Hence, the presentable $\infty$-category $\Mod_\Z(\scr C)$ of grouplike strictly commutative monoids in $\scr C$ is a module over $\Mod_\Z(\scr S)$, which is the $\infty$-category of connective $H\Z$-modules. 
For $X\in \Mod_\Z(\scr C)$ and $A$ a connective $H\Z$-module, we will write $X\otimes A$ for the result of the action of $A$ on $X$.
Note that the construction $X\mapsto X\otimes A$ is preserved by any colimit-preserving functor $f\colon\scr C\to\scr D$. 

In particular, if $\scr C$ is distributive and $X\in\scr C$, one can form the strictly commutative monoid $\Z X\otimes A$ for any connective $H\Z$-module $A$, which can be described more concretely as follows.
Any connective $H\Z$-module $A$ can be obtained from $\Z$ in the following steps:
\begin{enumerate}
	\item take finite products of copies of $\Z$ to get finitely generated free $\Z$-modules;
	\item take filtered colimits of finitely generated free $\Z$-modules to get arbitrary flat $\Z$-modules;
	\item take colimits of simplicial diagrams of free $\Z$-modules to get arbitrary connective $H\Z$-modules \cite[Lemma 5.5.8.13]{HTT}.
\end{enumerate}
Since the forgetful functor $\Mod_\Z(\scr C)\to\scr C$ preserves finite products and sifted colimits, the object $\Z X\tens A$ in $\scr C$ can be obtained from $\Z X$ using the same steps.

In the distributive $\infty$-category $\scr S$, $\Z X\tens A$ has its ``usual'' meaning. For instance, if $A$ is an abelian group, then $\tilde\Z S^p\tens A$ is an Eilenberg--Mac Lane space $K(A,p)$.

\section{Sheaves with transfers}
\label{sec:transfers}

Let $S$ be a Noetherian scheme, $\scr C\subset\Sch_S$ an admissible category consisting of separated $S$-schemes, and $R$ a commutative ring.
We denote by $\Cor(\scr C,R)$ the additive category whose objects are those of $\scr C$ and whose morphisms are the finite correspondences with coefficients in $R$ \cite[\S 9]{CD}.\footnote{In \emph{loc.\ cit.}, it is assumed that $R\subset \Q$. For general $R$, we define $\Cor(\scr C,R)$ by extending scalars from the largest possible subring of $\Q$.}
 We denote by $\PSh^\ast(\scr C)$ the $\infty$-category of pointed presheaves on $\scr C$, by 
 \[
 \PSh^\tr(\scr C,R) = \Fun^\times(\Cor(\scr C,R)^\op, \scr S)
 \]
 the $\infty$-category of presheaves with $R$-transfers, and by
\[R_\tr: \PSh^\ast(\scr C)\rightleftarrows \PSh^\tr(\scr C,R):u_\tr\]
the free–forgetful adjunction.
The functor $u_\tr$ preserves limits and sifted colimits and factors through the $\infty$-category $\CMon^\gr(\PSh(\scr C))$; in fact, it factors through the $\infty$-category of grouplike strictly commutative monoids, using the finite-product-preserving functor 
\[
\FFree_\Z \to \Cor(\scr C,R), \quad \Z^n \mapsto S^{\amalg n}.
\]
Since finite products and finite coproducts coincide in semiadditive $\infty$-categories, the functor
\[u_\tr\colon\PSh^\tr(\scr C,R)\to\Mod_\Z(\PSh(\scr C))\]
preserves all colimits.

For $\tau$ a topology on $\scr C$, we denote by $\Shv_\tau^\tr(\scr C,R)$ the $\infty$-category of $\tau$-sheaves with $R$-transfers on $\scr C$, and by $\Shv_\tau^\tr(\scr C,R)_{\A^1}$ the $\infty$-category of homotopy invariant $\tau$-sheaves with $R$-transfers on $\scr C$; these are defined by the cartesian squares
\begin{tikzmath}
	\def\colsep{1.5em}
	\diagram{\Shv_\tau^\tr(\scr C,R)_{\A^1} & \Shv_\tau^\tr(\scr C,R) & \PSh^\tr(\scr C,R) \\ \Shv_\tau^\ast(\scr C)_{\A^1} & \Shv_\tau^\ast(\scr C) & \PSh^\ast(\scr C)\rlap. \\};
	\arrows (11-) edge[c->] (-12) (12-) edge[c->] (-13) (21-) edge[c->] (-22) (22-) edge[c->] (-23) (11) edge node[left]{$u_\tr$} (21) (12) edge node[left]{$u_\tr$} (22) (13) edge node[left]{$u_\tr$} (23);
\end{tikzmath}
By \cite[Proposition 5.5.4.15]{HTT}, the $\infty$-categories  $\Shv_\tau^\tr(\scr C,R)$ and  $\Shv_\tau^\tr(\scr C,R)_{\A^1}$ are presentable and there exist localization functors
\begin{gather*}
a_\tau^\tr\colon \PSh^\tr(\scr C,R)\to \Shv_\tau^\tr(\scr C,R),\\
L_{\tau,\A^1}^\tr\colon \PSh^\tr(\scr C,R) \to \Shv_\tau^\tr(\scr C,R)_{\A^1}.
\end{gather*}
Furthermore, by \cite[Proposition 5.4.6.6]{HTT}, the forgetful functors $u_\tr$ in the above diagram are accessible. Since they preserve limits, they admit left adjoint functors, which we will denote by $R_\tr$ (it will always be clear from the context which category $R_\tr$ is defined on).

We say that a topology $\tau$ on $\scr C$ is \emph{compatible with $R$-transfers} if for any presheaf with $R$-transfers $\scr F$ on $\scr C$, the canonical map
\[a_\tau u_\tr\scr F\to u_\tr a_\tau^\tr\scr F\]
is an equivalence. The following lemma shows that $\tau$ is compatible with transfers if and only if it is weakly compatible with transfers in the sense of~\cite[Definition 9.3.2]{CD}. For example, it follows from~\cite[Proposition 9.3.3]{CD} that the Nisnevich and étale topologies are compatible with any transfers on any admissible category.

\begin{lemma}\label{lem:compatible}
	A topology $\tau$ on $\scr C$ is compatible with $R$-transfers if and only if, for every $\tau$-covering sieve $U\into X$, the morphism
	\[a_\tau u_\tr R_\tr (U_+) \to a_\tau u_\tr R_\tr(X_+)\]
	is an equivalence in $\Shv_\tau^\ast(\scr C)$.
\end{lemma}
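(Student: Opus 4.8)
The plan is to show both implications. The forward direction is essentially tautological: if $\tau$ is compatible with $R$-transfers, then $a_\tau u_\tr \scr F \to u_\tr a_\tau^\tr \scr F$ is an equivalence for \emph{every} presheaf with $R$-transfers $\scr F$, and in particular for $\scr F = R_\tr(U_+)$ and $\scr F = R_\tr(X_+)$. Since $a_\tau^\tr R_\tr(U_+) \to a_\tau^\tr R_\tr(X_+)$ is an equivalence in $\Shv_\tau^\tr(\scr C,R)$ (because $U\into X$ is a $\tau$-covering sieve and $R_\tr$, being a left adjoint, preserves the local equivalence $a_\tau U_+ \to a_\tau X_+$), applying $u_\tr$ and using compatibility yields the desired equivalence $a_\tau u_\tr R_\tr(U_+)\to a_\tau u_\tr R_\tr(X_+)$.

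For the converse, suppose the displayed condition holds for every $\tau$-covering sieve. I want to deduce that $a_\tau u_\tr \scr F \to u_\tr a_\tau^\tr \scr F$ is an equivalence for all $\scr F \in \PSh^\tr(\scr C, R)$. The strategy is a generation-and-closure argument. Both $\scr F \mapsto a_\tau u_\tr \scr F$ and $\scr F \mapsto u_\tr a_\tau^\tr \scr F$ are colimit-preserving functors $\PSh^\tr(\scr C,R)\to \Shv_\tau^\ast(\scr C)$: the first because $a_\tau$ and $u_\tr$ both preserve colimits (the latter by the discussion preceding the lemma, where it is noted that $u_\tr$ on sheaves with transfers preserves all colimits since it factors through $\CMon^\gr$), and the second likewise since $a_\tau^\tr$ is a left adjoint and $u_\tr$ preserves colimits. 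The natural transformation between them is the comparison map. Since $\PSh^\tr(\scr C, R) = \Fun^\times(\Cor(\scr C,R)^\op, \scr S)$ is generated under colimits by the representables $R_\tr(X_+)$ for $X\in\scr C$ (via the free functor $R_\tr$ applied to representable pointed presheaves, together with their $\scr S$-tensors), it suffices to check that the comparison map is an equivalence on each $R_\tr(X_+)$. But on representables the map $a_\tau u_\tr R_\tr(X_+)\to u_\tr a_\tau^\tr R_\tr(X_+)$ is identified, after unwinding, with a map whose being an equivalence is controlled by the $\tau$-local conditions: one checks that $u_\tr a_\tau^\tr R_\tr(X_+)$ is the $\tau$-sheafification (in $\Shv_\tau^\ast$) of $u_\tr R_\tr(X_+)$ \emph{forced to invert the covering-sieve maps}, and the hypothesis says precisely these maps are already inverted after one application of $a_\tau$. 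More concretely, I would argue that both functors, restricted along $R_\tr(({-})_+)\colon \Sch_S\text{-presheaves}\to\PSh^\tr$, agree because $a_\tau^\tr$ can be computed as a transfinite composite of the small-object argument attaching cells along generating $\tau$-local equivalences $R_\tr(U_+)\to R_\tr(X_+)$, and $u_\tr$ applied to each such attachment, after $a_\tau$-sheafification, is an equivalence by hypothesis; since $a_\tau u_\tr$ already sends the relevant maps to equivalences and preserves the relevant colimits, the two transfinite constructions have the same value.

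The main obstacle I anticipate is the bookkeeping in the converse direction: making precise that $a_\tau^\tr$ is built by iteratively coning off the maps $R_\tr(U_+)\to R_\tr(X_+)$ for $\tau$-covering sieves $U\into X$ (the generating trivial cofibrations of the Bousfield localization, via \cite[Proposition 5.5.4.15]{HTT}), and commuting this presentation past $u_\tr$ and $a_\tau$. This requires knowing that the generating $\tau$-local equivalences in $\PSh^\tr(\scr C,R)$ can be taken of the form $R_\tr$ applied to $\tau$-covering sieve inclusions — which follows because $R_\tr\colon \PSh^\ast(\scr C)\to\PSh^\tr(\scr C,R)$ is a left adjoint and the $\tau$-local model structure on $\PSh^\ast(\scr C)$ is generated by the covering-sieve inclusions $U_+\to X_+$ — together with the fact that $u_\tr$ on $\tau$-sheaves with transfers preserves colimits, so that applying $a_\tau u_\tr$ to this cellular presentation commutes with the colimits involved. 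Everything else is formal manipulation of adjunctions and colimit-preservation, so once this cellular description is in hand the argument closes quickly.
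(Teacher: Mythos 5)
Your forward direction is essentially the paper's argument (the key fact is that $a_\tau^\tr R_\tr \simeq R_\tr a_\tau$, so applying $u_\tr$ and compatibility does the job). The converse, however, contains a genuine gap.

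You assert that $\scr F\mapsto a_\tau u_\tr\scr F$ and $\scr F\mapsto u_\tr a_\tau^\tr\scr F$ are colimit-preserving functors $\PSh^\tr(\scr C,R)\to\Shv_\tau^\ast(\scr C)$, citing the discussion in \S\ref{sec:transfers} that $u_\tr$ preserves colimits. But that discussion establishes this only for $u_\tr$ regarded as a functor $\PSh^\tr(\scr C,R)\to\CMon^\gr(\PSh(\scr C))$, where finite coproducts coincide with finite products. The composite landing in $\PSh^\ast(\scr C)$ (the functor actually appearing in the lemma) does \emph{not} preserve colimits: it preserves sifted colimits and takes finite coproducts to finite products, and this is all. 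The paper's own proof makes this point explicitly. So your generate-and-check strategy, as stated, does not apply — you cannot conclude that a natural transformation of these functors is an equivalence by checking on the representables $R_\tr(X_+)$, because neither functor is a left adjoint into $\Shv_\tau^\ast(\scr C)$. Trying to repair this by targeting $\CMon^\gr(\Shv_\tau(\scr C))$ instead runs into circularity: that $u_\tr\colon\Shv_\tau^\tr\to\CMon^\gr(\Shv_\tau)$ preserves colimits is a downstream consequence (Corollary~\ref{cor:sifted}) of the very compatibility statement you are trying to prove.

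The way around, which the paper takes, is weaker and more careful: rather than proving $a_\tau u_\tr$ preserves colimits, one shows that the \emph{class of morphisms} $f$ in $\PSh^\tr(\scr C,R)$ with $a_\tau u_\tr(f)$ an equivalence is closed under colimits (and satisfies 2-out-of-3), which is enough to invoke the universal property of the localization $a_\tau^\tr$ (\cite[Proposition 5.5.4.20]{HTT}) and produce the comparison functor. Closure under colimits here uses precisely the partial colimit-preservation of $u_\tr$: it preserves sifted colimits, takes finite coproducts to finite products, and $a_\tau$ is left exact, so finite products of equivalences are again equivalences; since every colimit can be built from sifted colimits and finite coproducts, the class is closed. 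Your intuition about building $a_\tau^\tr$ cellularly and tracking the generating local equivalences $R_\tr(U_+)\to R_\tr(X_+)$ is the right shape of argument, but it must be run through this closure-under-colimits-of-\emph{equivalences} claim rather than through colimit preservation of the functors themselves.
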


\begin{proof}
	If $\tau$ is compatible with transfers, then for any $\scr F\in\PSh^\ast(\scr C)$, 
	\[a_\tau u_\tr R_\tr \scr F\simeq u_\tr a_\tau^\tr R_\tr\scr F\simeq u_\tr R_\tr a_\tau\scr F.\]
	Since $a_\tau(U_+)\simeq a_\tau(X_+)$, this proves the ``only if'' direction.
	
	Conversely, define
	\[E=\{R_\tr U_+\to R_\tr X_+\suchthat U\into X\text{ is a $\tau$-covering sieve}\}\]
	so that $\Shv_\tau^\tr(\scr C,R)\subset\PSh^\tr(\scr C,R)$ is the subcategory of $E$-local objects, and suppose that the functor $a_\tau u_\tr$ sends elements of $E$ to equivalences.
	Let $\bar E$ be the strong saturation of $E$, \ie, the smallest class of morphisms containing $E$, satisfying the 2-out-of-3 property, and closed under colimits in $\Fun(\Delta^1,\PSh^\tr(\scr C,R))$.
	By~\cite[Proposition 5.5.4.15(4) and Proposition 5.2.7.12]{HTT}, the localization functor $a_\tau^\tr\colon\PSh^\tr(\scr C,R)\to\Shv_\tau^\tr(\scr C,R)$ is the universal functor sending elements of $\bar E$ to equivalences. 
	 We claim that $a_\tau u_\tr$ sends morphisms in $\bar E$ to equivalences. It will suffice to show that the class of morphisms $f$ such that $a_\tau u_\tr (f)$ is an equivalence is closed under the 2-out-of-3 property (which is obvious) and colimits. The functor $u_\tr\colon\PSh^\tr(\scr C,R)\to \PSh^\ast(\scr C,R)$ does not preserve colimits, but it preserves sifted colimits and transforms finite coproducts into finite products. Since $a_\tau$ is left exact and any colimit can be built out of finite coproducts and sifted colimits, this proves the claim. Thus, there exists a functor $f\colon\Shv_\tau^\tr(\scr C,R)\to\Shv_\tau^\ast(\scr C)$ making the diagram
	\begin{tikzmath}
		\diagram{\Shv_\tau^\tr(\scr C,R) & \PSh^\tr(\scr C,R) & \Shv_\tau^\tr(\scr C, R) \\ 
		\Shv_\tau^\ast(\scr C) & \PSh^\ast(\scr C) & \Shv_\tau^\ast(\scr C) \\};
		\arrows (11-) edge[c->] (-12) (12-) edge node[above]{$a_\tau^\tr$} (-13)
		(11) edge node[left]{$u_\tr$} (21) (12) edge node[left]{$u_\tr$} (22) (13) edge node[right]{$f$} (23)
		(21-) edge[c->] (-22) (22-) edge node[below]{$a_\tau$} (-23);
	\end{tikzmath}
	commute. Since the horizontal compositions are the identity, $f\simeq u_\tr$ and $a_\tau u_\tr\simeq u_\tr a_\tau^\tr$.
\end{proof}

\begin{lemma}\label{lem:sifted0}
	Suppose that $\tau$ is compatible with $R$-transfers. Then the square
	\begin{tikzcentered}
		\diagram{ \PSh^\tr(\scr C,R) & \Shv^\tr_\tau(\scr C,R)_{\A^1} \\ \Mod_\Z(\PSh(\scr C)) & \Mod_\Z(\Shv_\tau(\scr C)_{\A^1}) \\};
		\arrows (11-) edge node[above]{$L_{\tau,\A^1}^\tr$} (-12) (11) edge node[left]{$u_\tr$} (21) (21-) edge node[above]{$L_{\tau,\A^1}$} (-22) (12) edge node[right]{$u_\tr$} (22);
	\end{tikzcentered}
	commutes. 
\end{lemma}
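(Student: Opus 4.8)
The strategy is to show that $u_\tr$ sends $L$-equivalences to $L'$-equivalences, after which the commutativity of the square is purely formal. Write $L=L^\tr_{\A^1}\circ a^\tr_\tau$, so that the $L$-local objects are the homotopy invariant $\tau$-sheaves with transfers. By the defining cartesian squares, $\scr F\in\PSh^\tr(\scr C,R)$ is $L$-local precisely when $u_\tr\scr F$ is an $\A^1$-invariant $\tau$-sheaf, and by the adjunction $R_\tr\dashv u_\tr$ (and the usual generators of $\tau$-sheaves and of $\A^1$-invariant objects among pointed presheaves) this holds exactly when $\scr F$ is local with respect to the set
\[S_0=\{R_\tr U_+\to R_\tr X_+\suchthat U\into X\text{ a $\tau$-covering sieve}\}\cup\{R_\tr(\A^1\times X)_+\to R_\tr X_+\suchthat X\in\scr C\}.\]
Hence the $L$-equivalences form the saturation of $S_0$, by \cite[Proposition 5.5.4.15]{HTT}. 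On the other hand $u_\tr\colon\PSh^\tr(\scr C,R)\to\CMon^\gr(\PSh(\scr C))$ preserves all colimits and, again by the defining cartesian squares, carries $\Shv^\tr_\tau(\scr C,R)_{\A^1}$ into the full subcategory $\CMon^\gr(\Shv_\tau(\scr C)_{\A^1})$ of $L'$-local objects; therefore $u_\tr^{-1}(\{L'\text{-equivalences}\})$ is itself saturated, and it suffices to check that $u_\tr$ sends each element of $S_0$ to an $L'$-equivalence.

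For this I record the following criterion: a morphism $f$ in $\CMon^\gr(\PSh(\scr C))$ is an $L'$-equivalence as soon as the induced map of underlying presheaves is inverted by $L_{\A^1}\circ a_\tau\colon\PSh(\scr C)\to\Shv_\tau(\scr C)_{\A^1}$. Indeed, for an $L'$-local $P$ the mapping space $\Map_{\CMon^\gr(\PSh(\scr C))}(-,P)$ is a limit, over the twisted arrow category of $\Fin_*$, of mapping spaces between finite powers of the underlying presheaves of the source and of $P$; since $L_{\A^1}a_\tau$ preserves finite products and $\Shv_\tau(\scr C)_{\A^1}$ is closed under finite products, the hypothesis on $f$ makes $\Map(-,P)$ send $f$ to an equivalence on each term, hence on the limit.

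It remains to treat the two kinds of generators. For a $\tau$-covering sieve $U\into X$, Lemma~\ref{lem:compatible} (applicable because $\tau$ is compatible with $R$-transfers) asserts that $a_\tau u_\tr R_\tr(U_+)\to a_\tau u_\tr R_\tr(X_+)$ is an equivalence, so the underlying presheaf map of $u_\tr R_\tr U_+\to u_\tr R_\tr X_+$ is a $\tau$-local equivalence and a fortiori is inverted by $L_{\A^1}a_\tau$; the criterion applies. For a projection $\A^1\times X\to X$, the underlying presheaf map of $u_\tr R_\tr(\A^1\times X)_+\to u_\tr R_\tr X_+$ is the projection $\Cor(\scr C,R)(-,\A^1\times X)\to\Cor(\scr C,R)(-,X)$; it is split by the zero section $s_0$, and the composite $(s_0)_*\circ\pr_*$ on $\Cor(\scr C,R)(-,\A^1\times X)$ is elementarily $\A^1$-homotopic to the identity, the homotopy being induced by the scaling morphism $\A^1\times X\times\A^1\to\A^1\times X$, $(a,x,t)\mapsto(ta,x)$, precomposed with the external product of a finite correspondence with the tautological section of $\A^1$. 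Thus this map is an $\A^1$-equivalence of presheaves and the criterion again applies. I expect this last point — the $\A^1$-contractibility of $\A^1$ witnessed at the level of finite correspondences — to be the only step requiring a genuine (if routine) computation with cycles; the remainder is bookkeeping with reflective localizations.

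With $u_\tr$ now known to carry $L$-equivalences to $L'$-equivalences, the proof concludes as follows. For any $\scr F\in\PSh^\tr(\scr C,R)$ the unit $\scr F\to L\scr F$ is an $L$-equivalence, so $u_\tr\scr F\to u_\tr L\scr F$ is an $L'$-equivalence whose target is $L'$-local; the essentially unique factorization of this map through the unit $u_\tr\scr F\to L'u_\tr\scr F$ is then an $L'$-equivalence between $L'$-local objects, hence an equivalence $L'u_\tr\scr F\xrightarrow{\ \sim\ }u_\tr L\scr F$, natural in $\scr F$. This exhibits the commutativity of the square.
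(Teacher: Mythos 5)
Your proposal is correct and follows essentially the same route as the paper: identify the localization $L$ by the set of generators $E_\tau\cup E_{\A^1}$, use Lemma~\ref{lem:compatible} to dispose of the $\tau$-covering generators, handle the $\A^1$-projections by an explicit contracting homotopy at the level of finite correspondences, and conclude by the universal property of reflective localizations. The only cosmetic differences are that the paper obtains your ``underlying-presheaf criterion'' for $L'$-equivalences directly from Lemma~\ref{lem:qr5w} (since $L'$ is the functor on grouplike commutative monoids induced by the distributive functor $L_{\A^1}a_\tau$, detecting $L'$-equivalences on underlying presheaves is immediate), and that for the $\A^1$-generators it cites the last part of the proof of~\cite[Theorem~1.7]{MEMS} rather than spelling out the scaling homotopy.
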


\begin{proof}
	Consider the diagram
	\begin{tikzcentered}
		\diagram{\Shv^\tr_\tau(\scr C,R)_{\A^1} & \PSh^\tr(\scr C,R) & \Shv^\tr_\tau(\scr C,R)_{\A^1} \\ \Mod_\Z(\Shv_\tau(\scr C)_{\A^1}) & \Mod_\Z(\PSh(\scr C)) & \Mod_\Z(\Shv_\tau(\scr C)_{\A^1})\rlap. \\};
		\arrows (11-) edge[c->] (-12) (21-) edge[c->] (-22) (11) edge node[left]{$u_\tr$} (21)(12-) edge node[above]{$L_{\tau,\A^1}^\tr$} (-13) (12) edge node[right]{$u_\tr$} (22) (22-) edge node[above]{$L_{\tau,\A^1}$} (-23) (13) edge[dashed] node[right]{$f$} (23);
	\end{tikzcentered}
	 It will suffice to show that a functor $f$ exists as indicated. Define
	\begin{align*}
		E_\tau &= \{R_\tr U_+\to R_\tr X_+\suchthat U\into X\text{ is a $\tau$-covering sieve in $\scr C$}\},\\
		E_{\A^1} &= \{R_\tr(X\times\A^1)_+\to R_\tr X_+\suchthat X\in\scr C\},
	\end{align*}
	so that $\Shv_\tau^\tr(\scr C,R)_{\A^1}\subset \PSh^\tr(\scr C,R)$ is the full subcategory of $(E_\tau\cup E_{\A^1})$-local objects.
	The functor $L_{\tau,\A^1}u_\tr$ carries morphisms in $E_\tau$ and $E_{\A^1}$ to equivalences: for $E_\tau$, this is because $\tau$ is compatible with $R$-transfers and for $E_{\A^1}$ it is because $u_\tr R_\tr(X\times\A^1)_+\to u_\tr R_\tr X_+$ is an $\A^1$-homotopy equivalence (see the last part of the proof of~\cite[Theorem 1.7]{MEMS}).
	By~\cite[Proposition 5.5.4.20]{HTT}, there exists a functor $f$ making the above diagram commutes. 
\end{proof}

\begin{corollary}\label{cor:sifted}
	Suppose that $\tau$ is compatible with $R$-transfers. Then the forgetful functor $u_\tr\colon\Shv^\tr_\tau(\scr C,R)_{\A^1}\to \Mod_\Z(\Shv_\tau(\scr C)_{\A^1})$ preserves colimits.
\end{corollary}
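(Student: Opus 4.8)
The plan is to deduce this formally from Lemma~\ref{lem:sifted0} together with the fact, noted earlier in this section, that the factored forgetful functor $u_\tr\colon\PSh^\tr(\scr C,R)\to\CMon^\gr(\PSh(\scr C))$ preserves all colimits. I would begin by recalling the two reflections in play: $\Shv^\tr_\tau(\scr C,R)_{\A^1}$ is a reflective subcategory of $\PSh^\tr(\scr C,R)$ with reflector $L$, and $\CMon^\gr(\Shv_\tau(\scr C)_{\A^1})$ is a reflective subcategory of $\CMon^\gr(\PSh(\scr C))$ with reflector $L'$ --- the latter because, as in the proof of Lemma~\ref{lem:sifted0}, $L'$ is the functor induced on grouplike commutative monoids by the distributive localization $L_{\A^1}a_\tau$, whose right adjoint (the inclusion) is fully faithful. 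For either reflection, a colimit in the subcategory is computed by forming the colimit in the ambient category and applying the reflector. I would also record that the square built from the two inclusions and the two copies of $u_\tr$ commutes, since the sheaf-level $u_\tr$ is by construction the restriction of the presheaf-level one.

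Then, given a diagram $q\colon K\to\Shv^\tr_\tau(\scr C,R)_{\A^1}$, write $\bar q$ for its composite with the inclusion into $\PSh^\tr(\scr C,R)$, so that $\colim q\simeq L(\colim\bar q)$. Applying $u_\tr$ and using the commutation $u_\tr L\simeq L'u_\tr$ of Lemma~\ref{lem:sifted0} yields $u_\tr(\colim q)\simeq L'(u_\tr(\colim\bar q))$. Since $u_\tr$ preserves colimits on presheaves with transfers, this equals $L'(\colim(u_\tr\bar q))$; and by the commuting square of inclusions, $u_\tr\bar q$ is the composite of $u_\tr q$ with the inclusion $\CMon^\gr(\Shv_\tau(\scr C)_{\A^1})\into\CMon^\gr(\PSh(\scr C))$, so that $L'(\colim(u_\tr\bar q))\simeq\colim(u_\tr q)$, the latter colimit being formed in $\CMon^\gr(\Shv_\tau(\scr C)_{\A^1})$. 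Tracking the colimit cocones along this chain identifies the canonical comparison map $\colim(u_\tr q)\to u_\tr(\colim q)$ with an equivalence, which is the assertion.

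The argument is essentially bookkeeping, so I do not anticipate a genuine obstacle; the two points that each deserve a word of care are (i) the identification of $\CMon^\gr(\Shv_\tau(\scr C)_{\A^1})$ as a reflective localization of $\CMon^\gr(\PSh(\scr C))$, which is where distributivity and Lemma~\ref{lem:qr5w} enter, and (ii) keeping the comparison maps coherent along the string of equivalences, so that one genuinely proves that the colimit comparison map is an equivalence rather than merely exhibiting an abstract equivalence of objects.
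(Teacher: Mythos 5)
Your argument is correct and is exactly the paper's proof, spelled out in detail: the paper's one-line justification ``This follows from Lemma~\ref{lem:sifted0} since $L$ and $L'u_\tr$ preserve colimits'' is precisely the bookkeeping you carry out, using that $L$ is an essentially surjective left adjoint so colimits in $\Shv^\tr_\tau(\scr C,R)_{\A^1}$ are computed by reflecting colimits of included diagrams, and that $u_\tr L\simeq L'u_\tr$ identifies the comparison maps. Your two ``points of care'' are both implicitly absorbed in the paper's terse phrasing (a natural equivalence of colimit-preserving functors automatically identifies the canonical comparison maps), so there is no gap.
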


\begin{proof}
	 This follows immediately from Lemma~\ref{lem:sifted0}. 
\end{proof}

The $\infty$-category $\Shv_{\tau}^\tr(\scr C,R)_{\A^1}$ is tensored over connective $HR$-modules. For $p\geq q\geq 0$ and $A$ a connective $HR$-module, the \emph{motivic Eilenberg--Mac Lane space} 
\[
K(A(q),p)_{\scr C}\in\Shv_\Nis^\ast(\scr C)_{\A^1}
\]
is defined by \[K(A(q),p)_{\scr C}=u_\tr (R_\tr S^{p,q}\tens A),\]
where $S^{p,q}\in\Shv_\Nis^\ast(\scr C)_{\A^1}$ is the motivic $p$-sphere of weight $q$.
Although this construction depends on the coefficient ring $R$ in general, it does not if either the schemes in $\scr C$ are regular or if the positive residual characteristics of $S$ are invertible in $R$ \cite[Remark 9.1.3(3)]{CD}; the latter will always be the case in what follows. 

\section{The étale homotopy type of motivic Eilenberg--Mac Lane spaces} 
\label{sec:EML}

Let $k$ be a separably closed field, $l\neq\car k$ a prime number, and $\scr C\subset \Sch_k$ an admissible category.
One defect of the $\Z/l$-local étale homotopy type functor $\Et_l\colon \Shv_\Nis(\scr C)_{\A^1}\to \Pro(\scr S)_{\Z/l}$ is that it does not preserve finite products and hence does not preserve commutative monoids. We have seen in Lemma~\ref{lem:Etdistributive} that $\Et_l$ preserves finite products between compact motivic spaces, but motivic Eilenberg–Mac Lane spaces are certainly not compact. We will fix this problem by constructing a factorization
	\[\Shv_\Nis(\scr C)_{\A^1}\xrightarrow{\Etd} \scr E\to\Pro(\scr S)_{\Z/l} \]
	of $\Et_l$ such that $\Etd$ is distributive and $\scr E$ is a ``close approximation'' of $\Pro(\scr S)_{\Z/l}$ by a distributive $\infty$-category (see Definition~\ref{def:distributive}). For our applications, we will also need $\Etd$ to factor through $\Shv_\h^\wedge(\Sch_k^\ft)_{\A^1}$. 
	The fact that the latter $\infty$-category may not be compactly generated explains some of the complexity of the following construction.

\begin{construction}\label{construction}
 Let $i\colon\scr C\into\Sch_k^\ft$ be the inclusion and let $i_!\colon\Shv_\Nis(\scr C)\to \Shv_\Nis(\Sch_k^\ft)$ be its colimit-preserving extension. Since $\scr C$ is admissible and in particular closed under finite products, the functor $i_!$ is distributive. Note that we have a commuting triangle
	\begin{tikzmath}
		\def\colsep{-.5em}
		\diagram{\Shv_\Nis(\scr C) & & \Shv_\h(\Sch_k^\ft) \\ & \Pro(\scr S)\rlap, & \\};
		\arrows (11-) edge node[above]{$a_\h i_!$} (-13) (11) edge node[left=4pt]{$\LPi_\infty^\h$} (22)
		(13) edge node[right=4pt]{$\LPi_\infty^\h$} (22);
	\end{tikzmath}
	where $\LPi_\infty^\h$ was defined in \S\ref{sec:homotopy_type_of_symmetric_powers}, and that the functor $\LPi_\infty^\h$ on the right is simply $\Pi_\infty$, in the sense of Remark~\ref{rmk:pi_infty}.
	For any $\infty$-topos $\scr X$, we have a commutative square
	\begin{tikzmath}
		\diagram{\scr X & \scr X_{\leq n} \\ \Pro(\scr S) & \Pro(\scr S_{\leq n})\rlap, \\};
		\arrows(11-) edge node[above]{$\tau_{\leq n}$} (-12) (21-) edge node[above]{$\tau_{\leq n}$} (-22) (11) edge node[left]{$\Pi_\infty$} (21) (12) edge node[right]{$\Pi_n$} (22);
	\end{tikzmath}
	where the horizontal maps are given by truncation and $\Pi_n=\tau_{\leq n}\circ \Pi_\infty$. We can therefore factor the $\Z/l$-local shape functor $L_{\Z/l}\Pi_\infty\colon\scr X\to\Pro(\scr S)_{\Z/l}$ as
	\[\scr X\rightarrow \scr X^\wedge\xrightarrow{\tau_{\leq *}}\lim_n\scr X_{\leq n}\to\lim_n\Pro(\scr S_{\leq n})_{\Z/l}\simeq \Pro(\scr S)_{\Z/l},\]
	where the last equivalence is~\eqref{eqn:postnikov}.
	Since truncations preserve colimits and finite products, the functor $\tau_{\leq *}$ is distributive. Applying this procedure to the $\infty$-topos $\scr X=\Shv_\h(\Sch_k^\ft)$, we obtain a factorization of $L_{\Z/l}\LPi_\infty^\h$ as
	\[\Shv_\Nis(\scr C)\xrightarrow{a_\h^\wedge i_!}\Shv_\h^\wedge(\Sch_k^\ft)\xrightarrow{\tau_{\leq *}}\lim_n\Shv_\h(\Sch_k^\ft)_{\leq n}\to\lim_n\Pro(\scr S_{\leq n})_{\Z/l}\simeq \Pro(\scr S)_{\Z/l}.\]
	
	Let $\Pro'(\scr S)$ be the smallest full subcategory of $\Pro(\scr S)$ containing $\Pi_\infty^\h X$ for every $k$-scheme of finite type $X$ and closed under finite products, finite colimits, and retracts. 
	We similarly define the $\infty$-categories $\Pro'(\scr S)_{\Z/l}$, $\Pro'(\scr S_{\leq n})$, and $\Pro'(\scr S_{\leq n})_{\Z/l}$ to be generated by the respective localizations of $\Pi_\infty^\h X$.
	Recall that the localization functors $L_{\Z/l}$ and $\tau_{\leq n}$ preserve finite products (Proposition~\ref{prop:finiteprod}).
	By Remarks~\ref{rmk:finitevs} and \ref{rmk:finitehomology}, the pro-spaces in $\Pro'(\scr S)$ satisfy the assumption of Proposition~\ref{prop:Kunneth2}, so that the localization functor $L_{\Z/l,\leq n}\colon \Pro'(\scr S)\to \Pro'(\scr S_{\leq n})_{\Z/l}$ also preserves finite products.
	It follows that finite products distribute over finite colimits in each of these $\infty$-categories, so that their ind-completions are distributive.
	
	Consider the colimit-preserving functor
	\[L_{\Z/l,\leq n}\LPi_\infty^\h\colon\Shv_{\h}(\Sch_k^\ft)_{\leq n}\to\Pro(\scr S_{\leq n})_{\Z/l}.\]
	By Lemma~\ref{lem:compact}, 
	the $\infty$-category $\Shv_\h(\Sch_k^\ft)_{\leq n}$ is compactly generated by representables, so that $(\Shv_{\h}(\Sch_k^\ft)_{\leq n})^\omega$ is generated by representables under finite colimits and retracts. Hence, this functor restricts to a functor
	\[L_{\Z/l,\leq n}\LPi_\infty^\h\colon(\Shv_{\h}(\Sch_k^\ft)_{\leq n})^\omega\to\Pro'(\scr S_{\leq n})_{\Z/l}\]
	that preserves finite colimits. It also preserves finite products by Proposition~\ref{prop:etalekunneth}, so that the induced functor on ind-completions
	\[\Shv_\h(\Sch_k^\ft)_{\leq n}\to\Ind(\Pro'(\scr S_{\leq n})_{\Z/l})\]
	is distributive.
	Thus, the composition
	\[\Shv_\Nis(\scr C)\xrightarrow{\tau_{\leq *}a_\h^\wedge i_!}\lim_n\Shv_\h(\Sch_k^\ft)_{\leq n}\to\lim_n\Ind(\Pro'(\scr S_{\leq n})_{\Z/l})\]
	is distributive, and it inverts $X\times\A^1\to X$ for every $X\in \scr C$ since $L_{\Z/l}\Pi_\infty^\h$ does, so it induces a distributive functor \[\Etd\colon \Shv_\Nis(\scr C)_{\A^1}\to \lim_n\Ind(\Pro'(\scr S_{\leq n})_{\Z/l}).\] By construction, $\Et_l$ is the composition of $\Etd$ and the colimit functor
	\[
	\pushQED{\qed}
	\lim_n\Ind(\Pro'(\scr S_{\leq n})_{\Z/l})\to\lim_n\Pro(\scr S_{\leq n})_{\Z/l}\simeq\Pro(\scr S)_{\Z/l}.\qedhere
	\popQED
	\]
\end{construction}

The next theorem is our étale version of~\cite[Proposition 3.41]{MEMS}. We point out that the category $\scr C$ in the statement below need \emph{not} be closed under symmetric powers, so the theorem applies directly with $\scr C$ the category of smooth separated $k$-schemes with no need for resolutions of singularities. 

\begin{theorem}\label{thm:main2}
	Let $k$ be an algebraically closed field of characteristic exponent $e$, $l\neq e$ a prime number, and $\scr C\subset\Sch_k$ an admissible subcategory consisting of semi-normal separated schemes. Then for any pointed object $X$ in $\Shv_{\Nis}(\scr C)_{\A^1}$ and any connective $H\Z[1/e]$-module $A$, there is an equivalence
	\[\theta_{X,A}\colon \tilde\Z\Etd X\tens A \simeq \Etd u_\tr(\Z_\tr X\tens A)\]
	of grouplike strictly commutative monoids in $\lim_n\Ind(\Pro'(\scr S_{\leq n})_{\Z/l})$, natural in $X$ and $A$, with the following properties:
	\begin{enumerate}
		\item the triangle
		\begin{tikzmath}
			\def\colsep{4em}
			\diagram{\Etd X & \tilde\Z\Etd X\tens \Z[1/e] \\ & \Etd u_\tr \Z[1/e]_\tr X \\};
			\arrows (11-) edge node[above]{$1$} (-12) (11) edge node[below left]{$\Etd(\textnormal{unit})$} (22) (12) edge node[right]{$\theta_{X,\Z[1/e]}$} node[left]{$\simeq$} (22);
		\end{tikzmath}
		is commutative;
		\item given $X$, $Y$, $A$, and $B$, the square
		\begin{tikzmath}
			\def\colsep{-5em}
			\diagram{(\tilde\Z\Etd X\tens A)\wedge (\tilde\Z\Etd Y\tens B) & \\ & \tilde\Z\Etd (X\wedge Y)\tens (A\tens B) \\ 
		\Etd u_\tr(\Z_\tr X\tens A)\wedge \Etd u_\tr(\Z_\tr Y\tens B)  & \\  & \Etd u_\tr(\Z_\tr (X\wedge Y)\tens (A\tens B))\\};
			\arrows (11) edge node[left]{$\theta_{X,A}\wedge \theta_{Y,B}$} node[right]{$\simeq$} (31) (22) edge node[right]{$\theta_{X\wedge Y,A\tens B}$}  node[left]{$\simeq$} (42)
			(11) edge (22) (31) edge (42);
		\end{tikzmath}
		is commutative.
	\end{enumerate}
\end{theorem}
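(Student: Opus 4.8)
The strategy is to reduce the assertion to the analogous statement at the level of $\infty$-topoi, which is essentially the content of the motivic Dold--Thom theorem of \cite{MEMS}, and then transport it along the distributive functor $\Etd$. First I would observe that both sides of $\theta_{X,A}$ are functors that are themselves distributive in $X$: the left-hand side $X\mapsto\tilde\Z\Etd X\tens A$ is the composite of the distributive functor $\Etd$ (by construction) with the distributive functor $Y\mapsto\tilde\Z Y\tens A$ on $\lim_n\Ind(\Pro'(\scr S_{\leq n})_{\Z/l})$, and the right-hand side $X\mapsto\Etd u_\tr(\Z_\tr X\tens A)$ is $\Etd$ precomposed with $X\mapsto u_\tr(\Z_\tr X\tens A)$, which is distributive on $\Shv_\Nis(\scr C)_{\A^1}$ by Corollary~\ref{cor:sifted} together with the fact that $\Z_\tr(-)\tens A$ preserves colimits and that $\N_\tr$ (hence $\Z_\tr$) preserves finite products (it sends $X\amalg Y$ to $\Z_\tr X\times\Z_\tr Y$ after group completion). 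Since $\Shv_\Nis(\scr C)_{\A^1}$ is generated under colimits by the image of $L_{\A^1}r_\Nis$ applied to $\scr C$, and since both functors and the would-be natural equivalence are distributive, it suffices to construct $\theta$ naturally on $X=L_{\A^1}r_\Nis(U_+)$ for $U\in\scr C$ — and in fact, using the Nisnevich and $\A^1$ descent already built into both sides, on the representables $U_+$ themselves.

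Next I would invoke Proposition~3.41 (or its supporting results) of \cite{MEMS}: in the $\infty$-topos $\Shv_\h^\wedge(\Sch_k^\ft)_{\A^1}$ — where resolution of singularities via de Jong's alterations (Proposition~\ref{prop:dejong}) makes representables by \emph{smooth} schemes generate everything, and where the h-topology is compatible with transfers — one has a canonical equivalence $\tilde\Z(a_\h^\wedge i_!X)\tens A\simeq u_\tr(\Z_\tr X\tens A)$ of strictly commutative monoids, compatible with the relevant unit and multiplication maps. The key point making this the right intermediate object is that $a_\h^\wedge i_!$ and its Postnikov truncations appear literally in the construction of $\Etd$: recall $\Etd$ is the composite $\tau_{\leq *}a_\h^\wedge i_!$ followed by the degreewise distributive functors $\tau_{\leq n}\Et_l$ on $\Shv_\h(\Sch_k^\ft)_{\leq n}$. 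So I would factor $\theta$ as the image under the remaining distributive functors $\lim_n\tau_{\leq n}\Et_l$ of the Dold--Thom equivalence in $\Shv_\h^\wedge(\Sch_k^\ft)_{\A^1}$, using Lemma~\ref{lem:grp} (distributive functors commute with group completion) and the fact, recorded in \S\ref{sec:group_completions}, that the construction $\Z X\tens A$ is preserved by any distributive functor. Properties (1) and (2) then follow because the corresponding compatibilities hold for the Dold--Thom equivalence in the h-topos — property (1) from the naturality of the unit $X\to\tilde\Z X\tens\Z$ under the monad $\N_\tr$, and property (2) from the lax symmetric monoidality of $\Z_\tr(-)$ and $\tilde\Z(-)$ together with the monoidality of $\Etd$.

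The main obstacle is the semi-normality hypothesis and the precise identification of $u_\tr(\Z_\tr X\tens A)$ with $\tilde\Z(a_\h^\wedge i_!X)\tens A$ in the h-local world. The point is that $u_\tr$ on $\Shv_\h^\tr(\Sch_k^\ft,\Z[1/e])_{\A^1}$ lands in strictly commutative monoids (via $\FFree_\Z\to\Cor$, $\Z^n\mapsto S^{\amalg n}$), and one needs to know that the h-sheafification of the free presheaf-with-transfers on a representable agrees with the h-sheafification of the free strictly commutative monoid — equivalently, that finite correspondences with $\Z[1/e]$-coefficients are, h-locally, generated by actual maps, which is where semi-normality enters (to control the qfh-to-h comparison and to ensure $\Cor(\scr C,\Z[1/e])$ behaves well; cf.\ the discussion after \cite[Remark 9.1.3 (3)]{CD}). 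I would handle this by citing the relevant statements of \cite{MEMS} directly — the motivic Dold--Thom theorem is proved there in exactly this generality — and restricting my own work to checking that the functors $\tau_{\leq n}\Et_l$ through which I transport it are genuinely distributive, which is Lemma~\ref{lem:Etdistributive} and the Construction preceding this theorem. The naturality in $A$ is then formal from the $H\Z[1/e]$-linearity of every functor in sight.
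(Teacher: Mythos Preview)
Your overall shape is right --- reduce to representables, invoke Dold--Thom, transport by distributivity --- and this is indeed how the paper proceeds. But there is a genuine gap at the step you treat as a black box. The motivic Dold--Thom theorem you want to cite, \cite[Theorem~3.7]{MEMS}, identifies $u_\tr\Z[1/e]_\tr Z_+$ with the group completion of $\coprod_{d\geq 0}i^*r(\Sym^d Z)$, where $\Sym^d Z$ is the \emph{schematic} symmetric power. On the other side, $\tilde\Z\Etd X$ is built (via Lemma~\ref{lem:N}) from the \emph{categorical} strict symmetric powers $\Sym^d(\Etd Z)$. Your claimed equivalence $\tilde\Z(a_\h^\wedge i_!X)\simeq u_\tr\Z_\tr X$ in the h-topos therefore hides exactly the comparison
\[
\phi\colon \Sym^d r_\Nis(Z) \longrightarrow a_\Nis i^*r(\Sym^d Z),
\]
and showing that $\phi$ becomes an equivalence after $a_\h^\wedge i_!$ is the whole point: it requires Corollary~\ref{cor:quotient2} (the qfh-sheaf $r_\qfh$ commutes with $\Sym^d$) and Proposition~\ref{prop:dejong} (de~Jong's alterations to pass from $\scr C$ to all finite-type $k$-schemes). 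Neither \cite[Proposition~3.41]{MEMS} nor its supporting results supply this --- that proposition is the analogue of the present theorem for a different realization, not an input here, and in the topological setting the comparison of symmetric powers is trivial.

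Two smaller issues: first, the functor $X\mapsto u_\tr(\Z_\tr X\tens A)$ is not distributive (it does not preserve finite products of pointed motivic spaces); the paper only claims, and only needs, that it preserves colimits when viewed as landing in commutative monoids (Corollary~\ref{cor:sifted}), which suffices for the left Kan extension reduction. Second, compatibility of the h-topology with transfers is neither asserted in the paper nor needed: the paper stays with the Nisnevich Dold--Thom identity and only passes to the h-topos to verify that the explicit comparison map $\phi$ is an equivalence there.
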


\begin{proof}
	Any $k$-scheme of finite type is Zariski-locally quasi-projective, so we can assume that the schemes in $\scr C$ are quasi-projective without changing the categories and functors involved.
	As $X$ varies, the source and target of $\theta_{X,A}$ are functors taking values in strictly commutative monoids in $\lim_n\Ind(\Pro'(\scr S_{\leq n})_{\Z/l})$, and as such they preserve colimits: for the left-hand side this is clear and for the right-hand side it follows from Corollary~\ref{cor:sifted}.
	In particular, these functors are left Kan extended from their restriction to $\scr C$.
	 To show the existence of $\theta_{X,A}$, it will therefore suffice to define $\theta_{X,A}$ for $X$ representable, \ie, $X=L_{\Nis,\A^1}r(Z)_+$ for some $Z\in\scr C$, and this construction should be natural in $Z$ and $A$.
	Furthermore, since $u_\tr$ is $H\Z$-linear and $\Etd$ is distributive, we have
	\[
	\Etd u_\tr(\Z_\tr X\tens A) \simeq \Etd u_\tr(\Z_\tr X)\otimes A
	\]
	so we can assume $A=\Z[1/e]$.
	Let $i\colon\scr C\into \Sch_k^\ft$ be the inclusion.
	By the motivic Dold–Thom theorem \cite[Theorem 3.7]{MEMS} (see also \cite[Theorem 6.8]{Suslin:1996}), there is an equivalence
	\[u_\tr\Z[1/e]_\tr Z_+\simeq a_\Nis((\coprod_{d\geq 0}i^*r(\Sym^dZ))^\gp[1/e])\]
	of pointed presheaves on $\scr C$, natural in $Z$. Note that the validity of this formula does not depend on the scheme $\Sym^dZ$ belonging to $\scr C$. By Lemma~\ref{lem:sifted0} and the distributivity of $L_{\Nis,\A^1}$, we obtain equivalences
	\[u_\tr\Z[1/e]_\tr X\simeq L_{\Nis,\A^1}((\coprod_{d\geq 0}i^*r(\Sym^dZ))^\gp[1/e])\simeq (\coprod_{d\geq 0}L_{\Nis,\A^1} i^*r(\Sym^dZ))^\gp[1/e]\]
	in $\Shv_\Nis(\scr C)_{\A^1}$. Since $\Etd $ preserves group completions of commutative monoids (Lemma~\ref{lem:grp}) and colimits,
	\[\Etd u_\tr\Z[1/e]_\tr X\simeq (\coprod_{d\geq 0}\Etd L_{\Nis,\A^1} i^* r(\Sym^dZ))^\gp[1/e].\]
	On the other hand, by Lemmas~\ref{lem:strict-gp} and~\ref{lem:N} and since $\Etd L_{\Nis,\A^1}$ commutes with $\Sym^d$,
	\[\tilde\Z\Etd X\tens\Z[1/e]\simeq (\coprod_{d\geq 0}\Etd L_{\Nis,\A^1}\Sym^dr_\Nis(Z))^\gp[1/e].\]
 We then define $\theta_{X,\Z[1/e]}\colon \tilde\Z\Etd X\tens\Z[1/e]\to \Etd u_\tr\Z[1/e]_\tr X$ to be the map induced by the obvious canonical map
	\[\phi\colon \Sym^d r_\Nis(Z)\to i^* r(\Sym^dZ)\]
	in $\Shv_\Nis(\scr C)_{\leq 0}$.
	To show that $\theta_{X,\Z[1/e]}$ is an equivalence, it suffices to show that $\Etd L_{\Nis,\A^1}(\phi)$ is an equivalence. By definition of $\Etd$, the functor $\Etd L_{\Nis,\A^1}$ factors through $a_{\h}^\wedge i_!\colon\Shv_\Nis(\scr C)\to\Shv_\h^\wedge(\Sch_k^\ft)$, so it suffices to show that $a_{\h}^\wedge i_!(\phi)$ is an equivalence. This follows from Corollary~\ref{cor:quotient2} and Proposition~\ref{prop:dejong} (since $k$ is perfect and every smooth $k$-scheme is Zariski-locally in $\scr C$).

	The strategy to prove (1) and (2) is the following: we first reduce as above to the representable case, where the statements follow from properties of the motivic Dold--Thom equivalence. 
	For (1), we may assume that $X$ is represented by $Z\in\scr C$. Then the adjunction map $X\to u_\tr\Z[1/e]_\tr X$ corresponds, through the Dold--Thom equivalence, to the map \[Z_+\simeq\Sym^0Z\amalg \Sym^1Z\into \coprod_{d\geq 0}\Sym^dZ\to (\coprod_{d\geq 0}\Sym^dZ)^\gp[1/e],\]
	which proves the result.
	For~(2), 
	we may assume that $X$ and $Y$ are represented by $Z$ and $W$ and that $A=B=\Z[1/e]$.
	Moreover, we can replace $\tilde \Z$ with $\tilde\N$.
	It then suffices to note that the pairing
	\[u_\tr\Z[1/e]_\tr X\wedge u_\tr \Z[1/e]_\tr Y\to u_\tr \Z[1/e]_\tr(X\wedge Y)\]
	arising from the monoidal structure of $\Z[1/e]_\tr$ is induced, via the Dold–Thom equivalence, by the obvious maps $\Sym^a Z\times\Sym^b W\to\Sym^{ab}(Z\times W)$.
\end{proof}

For $A$ a connective $H\Z$-module, let $A_l$ be the pro-$H\Z$-module $\lim_n A/l^n$.
Note that $A_l\simeq A$ if $A$ admits an $H\Z/l^n$-module structure for some $n\geq 1$.

\begin{lemma}\label{lem:EMLcompletion}
	Let $A$ be a connective $H\Z$-module and $p\geq 1$. Then
	\[L_{\Z/l}K(A,p) \simeq \tau_{<\infty}K(A_l,p).\]
\end{lemma}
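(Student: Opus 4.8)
The plan is to compute $L_{\Z/l}K(A,p)$ by transporting the description $K(A,p)\simeq\tilde\Z S^p\tens A$ of \S\ref{sec:group_completions} through $L_{\Z/l}$ and then identifying the result with $\tau_{<\infty}K(A_l,p)$.

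\emph{Step 1.} The embedding $\scr S\into\Pro(\scr S)$ preserves colimits and finite limits, and $L_{\Z/l}\colon\Pro(\scr S)\to\Pro(\scr S)_{\Z/l}$ preserves colimits (being a left adjoint) and finite products (Proposition~\ref{prop:finiteprod}); hence the composite $\scr S\to\Pro(\scr S)_{\Z/l}$ preserves colimits and finite products. Since the construction $X\mapsto\tilde\Z X\tens A$ of \S\ref{sec:group_completions} is assembled out of finite products, filtered colimits and geometric realizations, $L_{\Z/l}$ commutes with it. Taking $X=S^p$ gives, inside $\Pro(\scr S)_{\Z/l}$,
\[L_{\Z/l}K(A,p)\;\simeq\;\tilde\Z\bigl(L_{\Z/l}S^p\bigr)\tens A,\]
and the right-hand side is $\Z/l$-local by construction, which disposes of the ``locality'' half of the statement for free.

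\emph{Step 2.} It remains to identify $\tilde\Z(L_{\Z/l}S^p)\tens A$ with $\tau_{<\infty}K(A_l,p)$. Setting $A=\Z$ above gives $L_{\Z/l}K(\Z,p)\simeq\tilde\Z(L_{\Z/l}S^p)$, so it suffices to prove $\tilde\Z(L_{\Z/l}S^p)\simeq\tau_{<\infty}K(\Z_l,p)$ with $K(\Z_l,p):=\lim_nK(\Z/l^n,p)$, and then to note that tensoring with the fixed connective $H\Z$-module $A$ preserves finite products and therefore acts levelwise on pro-objects, carrying $\Z/l^n$ to $\Z/l^n\tens_{H\Z}A=A/l^n$, so that $\tau_{<\infty}K(\Z_l,p)\tens A\simeq\tau_{<\infty}K(A_l,p)$. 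For the case $A=\Z$: as an $H\Z$-module $\tilde\Z S^p$ is $H\Z[p]\simeq(H\Z[1])^{\tens_{H\Z}p}=(\tilde\Z S^1)^{\tens_{H\Z}p}$, and since $L_{\Z/l}$ commutes with $\tilde\Z$ and with relative tensor products over $H\Z$ (all colimits), the computation $L_{\Z/l}S^1\simeq K(\Z_l,1)$ recalled in \S\ref{sec:section_name} yields $\tilde\Z(L_{\Z/l}S^p)\simeq K(\Z_l,1)^{\tens_{H\Z}p}$, the $p$-fold relative tensor power of the pro-object $\{K(\Z/l^n,1)\}_n$. Tensoring over $H\Z$ acts levelwise here too, so this pro-space is $\{\,\Omega^\infty\!\bigl(H\Z\tens^L_\Z(\Z/l^n)^{\tens^L_\Z p}[p]\bigr)\,\}_n$.

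\emph{Step 3.} The remaining point is that the pro-$H\Z$-module $\{(\Z/l^n)^{\tens^L_\Z p}\}_n$ is pro-isomorphic to the constant pro-object $\{\Z/l^n\}_n$ concentrated in degree $0$: its positive-degree homotopy groups are killed by $l^n$ at level $n$, while the transition maps, being induced by the reductions $\Z/l^{n+1}\to\Z/l^n$ in each tensor factor, are divisible by $l$ in positive degrees (on $\mathrm{Tor}$, reduction restricts to multiplication by $l$ on the relevant torsion subgroups); a tower of groups with transitions divisible by $l$ whose level-$n$ term is annihilated by $l^n$ is pro-zero. Applying $\Omega^\infty(H\Z\tens^L_\Z-[p])$ then identifies $K(\Z_l,1)^{\tens_{H\Z}p}$ with $\{K(\Z/l^n,p)\}_n=\tau_{<\infty}K(\Z_l,p)$, and tensoring back with $A$ completes the proof.

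I do not expect a single deep obstacle here; the work is in verifying the formal compatibilities of $L_{\Z/l}$ with the colimit-and-finite-product constructions of \S\ref{sec:group_completions} (so that Step~1 and the reduction in Step~2 go through) and in checking that relative tensor products over $H\Z$ are genuinely computed levelwise on the pro-objects that occur — the actual homotopy theory, namely the pro-vanishing of the higher $\mathrm{Tor}$ groups in Step~3, being elementary once the setup is in place. The one pitfall to flag is the degenerate case $p=0$, where $K(A,0)$ may be disconnected and the statement must be interpreted (or excluded) accordingly.
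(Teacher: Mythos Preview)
Your Step~1 assumes that the constructions of \S\ref{sec:group_completions} (the free strictly commutative monoid $\N$, group completion, and tensoring with an $H\Z$-module) are available in $\Pro(\scr S)_{\Z/l}$ and that a functor preserving colimits and finite products automatically commutes with them. But those constructions are only set up in a \emph{distributive} $\infty$-category, and the paper establishes nothing of the sort for $\Pro(\scr S)_{\Z/l}$: Corollary~\ref{cor:distributive} gives only that finite products distribute over \emph{finite} colimits there, and the paper explicitly warns that infinite colimits in $\Pro(\scr S)$ are ill-behaved (not universal). Indeed, the entire apparatus of \S\ref{sec:EML} --- passing to the auxiliary target $\lim_n\Ind(\Pro'(\scr S_{\leq n})_{\Z/l})$ via $\Etd$ --- is built precisely to have a genuinely distributive category in which to perform manipulations like the ones you attempt. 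So the equivalence $L_{\Z/l}K(A,p)\simeq\tilde\Z(L_{\Z/l}S^p)\tens A$ in $\Pro(\scr S)_{\Z/l}$ is not justified, and without it Steps~2 and~3 have nothing to work with. (A secondary symptom of the same problem: your claim in Step~2 that ``tensoring with $A$ preserves finite products and therefore acts levelwise on pro-objects'' conflates preservation of finite products with preservation of cofiltered limits.)

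By contrast, the paper's proof is short and avoids $\Pro(\scr S)_{\Z/l}$-internal algebra entirely: reduce to $A$ discrete (both sides preserve finite products in $A$), invoke Bousfield--Kan's principal fibration lemma to reduce to $p=1$ with $A$ free abelian, and then quote the classical computation of the homotopy pro-groups of $L_{\Z/l}K(A,1)$ from \cite{BK}. Note in particular that the identification $L_{\Z/l}S^1\simeq K(\Z_l,1)$ you import from \S\ref{sec:section_name} is exactly the case $A=\Z$, $p=1$ of the lemma and is itself proved by this Bousfield--Kan input, so your reduction does not actually bypass it.
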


\begin{proof}
	We can assume $A$ truncated, as both sides preserve the limit of the Postnikov tower of $A$.
	Then $A$ is a finite product of $H\Z$-modules of the form $B[i]$ with $B$ discrete and $i\geq 0$.
	Since $K(B[i],p)\simeq K(B,p+i)$ and since both sides preserve finite products (see Proposition~\ref{prop:finiteprod}), we can assume $A$ discrete.
	By the principal fibration lemma \cite[III, 3.6]{BK}, we can reduce to the case $p=1$ and $A$ free abelian. 
	In this case we have $\pi_nL_{\Z/l}K(A,1)=0$ for $n\geq 2$ \cite[IV, Lemma 4.4]{BK} and $\pi_1L_{\Z/l}K(A,1)\simeq A_l$ by the proof of \cite[IV, Lemma 2.4]{BK}.
\end{proof}

Given $q\in\Z$, we let $A_l(q)=A_l\tens_{\Z_l}T_l\mu^{\tens q}$ (which is noncanonically isomorphic to $A_l$). For example, $\Z/l^n(q)\simeq \mu_{l^n}^{\tens q}$.

\begin{theorem}\label{thm:EML}
Let $k$ be an algebraically closed field of characteristic exponent $e$, $l\neq e$ a prime number, and $\scr C\subset\Sch_k$ an admissible subcategory consisting of semi-normal separated schemes.
For any connective $H\Z[1/e]$-module $A$ and any integers $p,q$ with $p\geq 1$ and $p\geq q\geq 0$, there is a canonical equivalence
	\[\Et_l K(A(q),p)_{\scr C}\simeq \tau_{<\infty}K(A_l(q),p)\]
	of pointed objects in $\Pro(\scr S)_{\Z/l}$, natural in $A$, and $\Et_l$ preserves smash products between such spaces. 
	Furthermore, under these equivalences, 
	\begin{enumerate}
		\item $\Et_l$ sends the canonical map
		\[S^{p,q}\to K(\Z[1/e](q),p)_{\scr C}\]
		to the canonical map
		\[S^{p,q}_l\to K(\Z_l(q),p);\]
		\item $\Et_l$ sends the canonical map
		\[K(A(q),p)_{\scr C}\wedge K(B(s),r)_{\scr C}\to K((A\tens B)(q+s),p+r)_{\scr C}\]
		to the canonical map
		\[\tau_{<\infty}K(A_l(q),p)\wedge \tau_{<\infty}K(B_l(s),r)\to \tau_{<\infty}K((A\tens B)_l(q+s),p+r).\]
	\end{enumerate}
\end{theorem}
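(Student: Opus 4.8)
The plan is to deduce the theorem from Theorem~\ref{thm:main2} applied to the motivic spheres, together with Proposition~\ref{prop:spheres} and Lemma~\ref{lem:EMLcompletion}, working first in the distributive $\infty$-category $\scr E:=\lim_n\Ind(\Pro'(\scr S_{\leq n})_{\Z/l})$ and then pushing the result forward to $\Pro(\scr S)_{\Z/l}$ along the colimit functor, which I will write $c$, so that $\Et_l=c\circ\Etd$.

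First I would unwind the definition. Since $e$ is invertible in $\Z[1/e]$, the motivic Eilenberg--Mac Lane space is $K(A(q),p)_{\scr C}=u_\tr(\Z_\tr S^{p,q}\tens A)$, the tensor being with $A$ viewed as a connective $H\Z[1/e]$-module, where $S^{p,q}$ is a pointed compact object of $\Shv_\Nis(\scr C)_{\A^1}$ (a finite smash power of the simplicial circle and $\G_m$). Applying the distributive functor $\Etd$ and invoking Theorem~\ref{thm:main2} with $X=S^{p,q}$ gives a natural equivalence
\[\Etd K(A(q),p)_{\scr C}\;\simeq\;\tilde\Z\,\Etd S^{p,q}\tens A\]
of grouplike strictly commutative monoids in $\scr E$, which by properties (1) and (2) of Theorem~\ref{thm:main2} is compatible with the evident unit and multiplication maps.

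The next step is to identify $\tilde\Z\,\Etd S^{p,q}\tens A$. Since $S^{p,q}$ is compact, $\Etd S^{p,q}$ lies in the ``small'' part $\lim_n\Pro'(\scr S_{\leq n})_{\Z/l}$ on which $c$ is the inclusion, and $c\,\Etd S^{p,q}\simeq S^{p,q}_l$ by Proposition~\ref{prop:spheres}, the weight-one circle contributing $K(\Z_l,1)$ and each $\G_m$-factor contributing $K(T_l\mu,1)$. The construction $Y\mapsto\tilde\Z Y\tens A$ is built solely from the operations $\Sym^d$, coproducts, group completion, finite products, filtered colimits and geometric realizations, and $L_{\Z/l}$ preserves finite products (Proposition~\ref{prop:finiteprod}) as well as all colimits; hence $L_{\Z/l}$ commutes with it. Together with the fact recalled in \S\ref{sec:group_completions} that $\tilde\Z S^p\tens A\simeq K(A,p)$ and with Lemma~\ref{lem:EMLcompletion} this yields
\[c\big(\tilde\Z\,\Etd S^{p,q}\tens A\big)\;\simeq\;\tilde\Z\,S^{p,q}_l\tens A\;\simeq\;L_{\Z/l}\big(\tilde\Z S^p\tens A\big)\;\simeq\;L_{\Z/l}K(A,p)\;\simeq\;\tau_{<\infty}K(A_l(q),p),\]
the last equivalence being Lemma~\ref{lem:EMLcompletion} together with the bookkeeping that the $q$ tensor factors of $K(T_l\mu,1)$ twist $A_l$ into $A_l(q)=A_l\tens_{\Z_l}T_l\mu^{\tens q}$. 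Composing with the equivalence of the previous paragraph (to which $c$ may be applied because $\Et_l=c\circ\Etd$) produces the asserted equivalence $\Et_l K(A(q),p)_{\scr C}\simeq\tau_{<\infty}K(A_l(q),p)$, visibly natural in $A$.

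It remains to verify the three decorations. Naturality in $A$ is immediate from the naturality in Theorem~\ref{thm:main2}. Property (1) follows by combining property (1) of Theorem~\ref{thm:main2} with the computation $c\,\Etd S^{p,q}\simeq S^{p,q}_l$ of Proposition~\ref{prop:spheres}, and property (2) together with the claim that $\Et_l$ preserves smash products follows from property (2) of Theorem~\ref{thm:main2} and the monoidality $\tilde\Z(X\wedge Y)\tens(A\tens B)\simeq(\tilde\Z X\tens A)\wedge_{H\Z}(\tilde\Z Y\tens B)$ of the free strictly commutative monoid construction, which $\Etd$ preserves. The step I expect to be the main obstacle is the transfer of the $\scr E$-level computation to $\Pro(\scr S)_{\Z/l}$: the functor $c$ is \emph{not} distributive (finite products do not distribute over infinite colimits in $\Pro(\scr S)_{\Z/l}$, cf.\ Corollary~\ref{cor:distributive}), so one must argue that the finite products occurring — those internal to the $\Sym^d$'s and those defining $\wedge_{H\Z}$ — are computed correctly by $c$. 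I would handle this by keeping the identification entirely inside $\scr E$, matching $\tilde\Z\,\Etd S^{p,q}\tens A$ with the image in $\scr E$ of $\tau_{<\infty}K(A_l(q),p)$ (using that $\Pro'(\scr S_{\leq n})_{\Z/l}$ is closed under finite products and finite colimits) and only then applying $c$, under which the image of a pro-truncated space is itself. A secondary point requiring care is the Tate twist, which must be tracked through Proposition~\ref{prop:spheres} to produce $A_l(q)$ canonically rather than merely $A_l$.
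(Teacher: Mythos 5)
Your overall strategy matches the paper's: apply Theorem~\ref{thm:main2} to $X=S^{p,q}$, identify $\Etd S^{p,q}$ via Proposition~\ref{prop:spheres}, reduce to $q=0$ by twisting, and conclude with Lemma~\ref{lem:EMLcompletion}. You also correctly locate the genuine difficulty, namely that the colimit functor $c\colon\scr E\to\Pro(\scr S)_{\Z/l}$ is not distributive. However, your proposed resolution of that difficulty has a gap.

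You write that you would ``match $\tilde\Z\,\Etd S^{p,q}\tens A$ with the image in $\scr E$ of $\tau_{<\infty}K(A_l(q),p)$.'' There is no functor $\Pro(\scr S)_{\Z/l}\to\scr E$ under which $\tau_{<\infty}K(A_l(q),p)$ has an ``image,'' and for an arbitrary connective $H\Z[1/e]$-module $A$ the pro-space $\tau_{<\infty}K(A_l(q),p)$ need not lie in $\Pro'(\scr S)_{\Z/l}$, so it does not come from the constant ind-object embedding $\lim_n\Pro'(\scr S_{\leq n})_{\Z/l}\into\scr E$ either. Relatedly, in your displayed chain the step $\tilde\Z\,S^{p,q}_l\tens A\simeq L_{\Z/l}(\tilde\Z S^p\tens A)$ is not meaningful as written: the construction $\tilde\Z(-)\tens A$ is only defined on distributive $\infty$-categories, and $\Pro(\scr S)_{\Z/l}$ is not distributive (nor is $\Pro(\scr S)$ even presentable), so one cannot first form $\tilde\Z\,S^{p,q}_l\tens A$ there and then argue that $L_{\Z/l}$ commutes with the construction. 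The object $\tilde\Z\,\Etd S^{p,q}\tens A$ lives in $\scr E$ and involves infinite colimits, so it escapes the ``small'' subcategory $\lim_n\Pro'(\scr S_{\leq n})_{\Z/l}$ on which $c$ is a full embedding.

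The paper's resolution is to observe that $\tau_{\leq *}S^{p,0}_l\simeq L_{\Z/l}^\ind j^\ind(iS^p)$, where $i\colon\scr S^\omega\into\Ind(\scr S^\omega)\simeq\scr S$ and $L_{\Z/l}^\ind j^\ind\colon\Ind(\scr S^\omega)\to\scr E$ is distributive. Since $\tilde\Z(-)\tens A$ commutes with distributive functors, one has
\[
c\bigl(\tilde\Z\,\Etd S^{p,0}\tens A\bigr)\simeq c\,L_{\Z/l}^\ind j^\ind\bigl(\tilde\Z\,iS^p\tens A\bigr)\simeq L_{\Z/l}\,j\,c\bigl(\tilde\Z\,iS^p\tens A\bigr)\simeq L_{\Z/l}K(A,p),
\]
using that $c\colon\Ind(\scr S^\omega)\to\scr S$ is an equivalence and the commutativity of the relevant square of functors into $\Pro(\scr S)_{\Z/l}$. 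This is the step you need to make precise; without it, the passage from the identification in $\scr E$ to the asserted identification in $\Pro(\scr S)_{\Z/l}$ is unjustified. The same factorization through $L_{\Z/l}^\ind j^\ind$ is also what justifies the smash-product statement, by showing $\Et_l(X\wedge Y)\simeq\Et_l X\wedge\Et_l Y$ whenever $\Etd X$ and $\Etd Y$ lie in its essential image.
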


\begin{proof}
	By Theorem~\ref{thm:main2}, we have
	\[
	\Etd  K(A(q),p)_{\scr C}\simeq \tilde\Z\Etd S^{p,q}\tens A.
	\]
	By Proposition~\ref{prop:spheres} and the definition of $\Etd$ (see Construction~\ref{construction}), we have
	\[
	\Etd S^{p,q} 
	\simeq L_{\Z/l,\leq *} S^{p,q}_l,
	\]
	where $S^{p,q}_l\in\Pro'(\scr S)_{\Z/l}$ is considered as a constant ind-$\Z/l$-local pro-space. Thus,
	\begin{equation*}\label{eqn:indEML}
		\Etd  K(A(q),p)_{\scr C}\simeq \tilde\Z L_{\Z/l,\leq *} S^{p,q}_l\tens A.
	\end{equation*}
	Now we apply the colimit functor
	\[c\colon \lim_n\Ind(\Pro'(\scr S_{\leq n})_{\Z/l})\to\lim_n\Pro(\scr S_{\leq n})_{\Z/l}\simeq \Pro(\scr S)_{\Z/l}\]
	 to both sides. The left-hand side becomes $\Et_l K(A(q),p)_{\scr C}$, by definition of $\Etd$. 
	 We are thus reduced to proving that
	 \begin{equation}\label{eqn:final}
	 c(\tilde \Z L_{\Z/l,\leq *}S^{p,q}_l\otimes A) \simeq \tau_{<\infty} K(A_l(q),p).
	 \end{equation}
	 For this, we consider the following commutative diagram:
	\begin{tikzcentered}
		\diagram{\scr S^\omega & \Ind(\scr S^\omega) & \lim_n\Ind(\Pro'(\scr S_{\leq n})) & \lim_n\Ind(\Pro'(\scr S_{\leq n})_{\Z/l}) & \\ & \scr S & \Pro(\scr S_{<\infty}) & \Pro(\scr S)_{\Z/l}\rlap.\\};
		\arrows (11-) edge[c->] node[above]{$i$} (-12) (11) edge[right hook->] (22)
		(12-) edge node[above]{$j^\ind$} (-13) (12) edge node[right]{$c$} node[left]{$\simeq$} (22) (22-) edge node[above]{$j$} (-23) (13) edge node[right]{$c$} (23) (13-) edge node[above]{$L^\ind$} (-14) (14) edge node[right]{$c$} (24) (23-) edge node[above]{$L_{\Z/l}$} (-24);
	\end{tikzcentered}
	Note that $L_{\Z/l,\leq *}S^{p,0}_l$ is the image of $S^p$ by the top row of this diagram.
	 The functor $j^\ind$ is clearly distributive and $L^\ind$ is distributive by Proposition~\ref{prop:Kunneth2}. We therefore have equivalences
	\[
		c(\tilde \Z L_{\Z/l,\leq *}S^{p,0}_l\otimes A) \simeq c (\tilde\Z (L^\ind j^\ind iS^{p})\tens A)\simeq c L^\ind j^\ind (\tilde\Z iS^p\tens A)
		\simeq L_{\Z/l} jK(A,p),
	\]
	which concludes the case $q=0$ by Lemma~\ref{lem:EMLcompletion}.
	It also concludes the general case if one is willing to choose an isomorphism $\Z_l\simeq T_l\mu$.
	However, the twisting for $q>0$ makes the equivalence~\eqref{eqn:final} independent of such a choice, as a consequence of the following observation: if $\alpha$ is an automorphism of $\Z_l$, then the automorphism $\mathrm{id}^{\wedge (p-q)}\wedge \alpha^{\wedge q}$ of $S^{p,0}_l$ induces $\alpha^{\otimes q}$ on the top homology pro-group $\Z_l$.
	A similar argument shows that $\Et_l(X\wedge Y)\simeq \Et_l(X)\wedge\Et_l(Y)$ whenever $\Etd X$ and $\Etd Y$ belong to the essential image of $L^\ind j^\ind$. The remaining statements are easily deduced from properties (1) and (2) in Theorem~\ref{thm:main2}.
\end{proof}

In conclusion, let us emphasize the two most important special cases of Theorem~\ref{thm:EML}:

\begin{corollary}
	Let $k$ be an algebraically closed field of characteristic exponent $e$, $l\neq e$ a prime number, and $\scr C\subset\Sch_k$ an admissible subcategory consisting of semi-normal separated schemes.
	\begin{enumerate}
		\item For any $\Z[1/e]\subset\Lambda\subset \Z_{(l)}$, there is a canonical equivalence
		\[
		\Et_l K(\Lambda(q),p)_{\scr C} \simeq K(T_l\mu^{\otimes q}, p),
		\]
		where $T_l\mu^{\otimes q}$ is the $l$-adic Tate module of $\mu^{\otimes q}$ viewed as a pro-group.
		\item For any $n\geq 1$, there is a canonical equivalence
		\[
		\Et_l K(\Z/l^n(q),p)_{\scr C} \simeq K(\mu_{l^n}^{\otimes q}, p).
		\]
		In particular, $\Et_l K(\Z/l^n(q),p)_{\scr C}$ is a constant pro-space.
	\end{enumerate}
	In both cases, the $\Z/l$-local étale homotopy type is already $\Z/l$-profinite.
\end{corollary}

\providecommand{\bysame}{\leavevmode\hbox to3em{\hrulefill}\thinspace}

\end{document}